\newtheorem{thm}{Theorem}[section]
\newtheorem{cor}[thm]{Corollary}
\newtheorem{lem}[thm]{Lemma}
\newtheorem{prop}[thm]{Proposition}
\theoremstyle{definition}
\newtheorem{defn}[thm]{Definition}
\newtheorem{prop-def}[thm]{Proposition--Definition}
\newtheorem{nota}[thm]{Notation}
\newtheorem{exmp}[thm]{Example}
\newtheorem{Q}[thm]{Question}
\newtheorem{sit}[thm]{Situation}
\theoremstyle{remark}
\newtheorem{rmk}[thm]{Remark}
\newcommand{\zz}{{\mathbb Z}}
\newcommand{\PP}{{\mathbb P}}
\newcommand{\kk}{{\mathbb K}}
\newcommand{\jj}{{\mathrm J}}
\newcommand{\fh}{{\mathfrak h}}
\newcommand{\ord}{\operatorname{ord}}
\newcommand{\taut}{\operatorname{taut}}
\newcommand{\fami}{\mathcal}
\newcommand{\sh}{\mathscr}
\newcommand{\sE}{\sh E}
\newcommand{\sL}{\sh L}
\newcounter{ENUM}
\newcommand{\itm}{\item}
\newenvironment{ilist}[1][0]{\renewcommand{\theENUM}{\roman{ENUM}}\renewcommand{\itm}{\addtocounter{ENUM}{1}\item[(\theENUM)]}\begin{itemize}\setcounter{ENUM}{#1}}{\end{itemize}}
\begin{document}

\title{A Riemann-Hurwitz-Pl\"{u}cker formula}

\author{Brian Osserman}
\address{Department of Mathematics, UC Davis, One Shields Ave, Davis, CA 95616}
\email{osserman@math.ucdavis.edu}

\author{Adrian Zahariuc}
\address{Department of Mathematics and Statistics, University of Windsor, 401 Sunset Ave, Windsor, ON, N9B 3P4}
\email{adrian.zahariuc@uwindsor.ca}

\subjclass[2010]{Primary 14H99, 14C20.}
\keywords{Ramification point, Riemann-Hurwitz formula, Pl\"{u}cker formula}

\begin{abstract}
We prove a simultaneous generalization of the classical Riemann-Hurwitz
and Pl\"ucker formulas, addressing the total number of inflection points
of a morphism
from a (smooth, projective) curve to an arbitrary (smooth, projective) 
higher-dimensional variety. Our definition of inflection point is relative to 
an algebraic family of divisors on the target variety, and our formula is 
obtained using the theory of localized top Chern classes. In assigning 
multiplicities to inflection points, we frequently have to consider 
excess degeneracy loci, but we are able to show nonetheless that the 
multiplicities are always nonnegative, and are positive under very mild 
hypotheses.
\end{abstract}

\thanks{Brian Osserman is partially supported by grants from the Simons 
Foundation \#279151 and \#585392.}

\maketitle

\section{Introduction}\label{S: intro}

We work over an algebraically closed field ${\kk}$ of arbitrary
characteristic. 

The classical Riemann-Hurwitz and Pl\"ucker formulas
describe the total amount of inflection (or ramification) of a morphism between 
(smooth, projective) curves, and of a morphism from a curve to projective
space, respectively (see for instance \cite[Proposition 1.1]{[EH83]} for
the latter). These are both of fundamental importance to the 
study of algebraic curves, and they overlap in the case of a morphism
from a curve to the projective line, where they give the same formula.
The aim of this paper is to give a simultaneous generalization of both
formulas, to treat morphisms from a curve $C$ to an arbitrary variety $X$
(still assumed smooth and projective). We hope that this will constitute
a useful new tool in the study of curves on higher-dimensional varieties.

There does not appear to be any intrinsic
notion of inflection points of such a morphism, so our approach is to define
inflection relative to a family of divisors on $X$.
Accordingly, we work throughout in the following situation:

\begin{sit}\label{sit:basic}
Let $X$ be a smooth, projective variety, $S$ a projective variety of
dimension $n$, and
${\fami D} \subseteq X \times S$ a flat family of divisors 
on $X$.
\end{sit}

We assume that $S$ is irreducible to streamline some of the statements in
the introduction. However, we will allow $S$ to be more general in the
main body below.

\begin{defn}\label{defn:inflection intro} If we are given a smooth, projective curve $C$, and a morphism
$f:C \to X$,
we say that a $\kk$-point $p \in C$ is an \emph{inflection point} relative to ${\fami D}$ if there exists a $z \in S$ with corresponding divisor $D = {\fami D}_z \subseteq X$ such that $f^*D$ has multiplicity at least $n+1$ at $p$.\footnote{The term `ramification' is often used in the linear series context in place of inflection, but since there is a separate general notion of ramification of a morphism, we felt it would ultimately be less confusing to use the term `inflection.'} 
\end{defn}

Here we allow the possibility that $f(C) \subseteq D$ for some $D$ in
$\fami D$, in which case our convention is that every point of $C$ is 
considered an inflection point. Thus, this situation will not be 
interesting for us.

Although our setup depends on the extra data of the divisor family $\fami D$,
in practice it can often be made canonical, or dependent only on standard
data. By taking $S$ to be a connected component of the Hilbert scheme of $X$
we can reduce the extra data to an element of the Neron-Severi group.
If the Neron-Severi group of $X$ is cyclic, this 
element can be chosen canonically as the ample generator.
This is precisely what happens in the case of the Riemann-Hurwitz and
Pl\"ucker formulas. Alternatively, it is also natural to consider 
principally polarized abelian varieties, where our divisor family is 
determined by the polarization. We examine this case in \S\ref{applications}.

The formula for the total inflection of such a morphism will be expressed
in terms of two quantities $N(X,{\fami D}) \in \zz_{\geq 0}$ and 
${\sh H}(X,{\fami D}) \in \mathrm{Pic}(X)$ which arise in Situation 
\ref{sit:basic}. Specifically, for any positive integer $m$, there is a natural evaluation morphism on the space of $m$-tuples of points on divisors in ${\fami D}$, denoted
$$ \mathrm{ev}_m:\underbrace{{\fami D} \times_S {\fami D} \times_S \dots \times_S {\fami D}}_{m \text{ copies of }{\fami D}} \longrightarrow X^m. $$
This is simply the projection $X^m \times S \to X^m$ restricted to the fiber product. First, both the source and the target of $\mathrm{ev}_n$ have dimension $n\dim X$ and the target is in addition irreducible, and we define 
\begin{equation} N(X,{\fami D}) = \deg \mathrm{ev}_n. \end{equation}
Thus, $N(X,{\fami D})$ is simply the number of  divisors in $\fami D$ passing through $n$ general points in $X$.\footnote{Counted with suitable multiplicity, although the multiplicity will be $1$ if the divisors in $\fami D$ are generically reduced and the characteristic of $\kk$ is $0$.} Second, if $\smash{\alpha \in A_{(n+1)\dim X - 1}(X^{n+1}) = \mathrm{Pic}(X^{n+1}) }$ is the pushforward of the fundamental class of the source of $\smash{ \mathrm{ev}_{n+1} }$ to its target, and $\smash{ \delta:X \to X^{n+1} }$ is the small diagonal immersion, we define
\begin{equation} {\sh H}(X,{\fami D}) = \delta^*\alpha. \end{equation}
Thus ${\sh H}(X,{\fami D})$ is the pullback to $X$ under the small diagonal immersion of (roughly) the (class of the) divisor consisting of $(n+1)$-tuples of points on $X$ contained in some divisor in $\fami D$. 

Our main theorem is then the following.

\begin{thm}[Theorem \ref{thm:main theorem}]\label{main theorem} In Situation \ref{sit:basic},
if we are given any smooth proper curve $C$, and any morphism $f:C \to X$ such
that the set $I \subseteq C$ of inflection points relative to $\fami D$ is 
finite, then it is possible to assign nonnegative integer multiplicities 
$m_p$ to all 
$p \in I$ such that
\begin{equation}\label{main equation}
\sum_{p \in I} m_p p \sim f^*{\sh H}(X,\fami D) 
+ {n+1 \choose 2} N(X,\fami D) K_C,
\end{equation}  
where $\sim$ denotes linear/rational equivalence. In fact, we have $m_p > 0$ for all $p \in I$ if the induced morphism $S \to \mathrm{Hilb}(X)$ to the Hilbert scheme is generically finite, respectively $m_p = 0$ for all $p \in I$ otherwise.
\end{thm}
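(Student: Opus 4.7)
The plan is to realize the weighted inflection cycle as the pushforward to $C$ of a localized top Chern class on the auxiliary $(n+1)$-dimensional variety $C \times S$. Let $\pi \colon C \times S \to C$ denote the projection, and set $\sh{L} := (f \times \mathrm{id}_S)^*\mathcal{O}_{X \times S}(\mathcal{D})$, whose canonical section $\sigma$ cuts out the pullback divisor $\mathcal{D}_C := (f \times \mathrm{id}_S)^{-1}(\mathcal{D})$. Because $\pi$ is smooth of relative dimension one, the relative bundle of $n$-th order principal parts $\mathcal{P} := \mathcal{P}^n_\pi(\sh{L})$ is locally free of rank $n+1$, and $\sigma$ induces a section $j^n\sigma$ of $\mathcal{P}$ whose zero scheme $V$ equals, set-theoretically, $\{(p,z) : \ord_p f^* D_z \geq n+1\}$; in particular $\pi(V) = I$.

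Since $\operatorname{rank}(\mathcal{P}) = n+1 = \dim(C \times S)$, the theory of localized top Chern classes (Fulton, \emph{Intersection Theory}, Ch.~14) supplies a nonnegative cycle $Z(j^n\sigma) \in A_0(V)$ whose image in $A_0(C \times S)$ is $c_{n+1}(\mathcal{P}) \cap [C \times S]$. Grouping by fiber, I would set $m_p := \deg(\pi_*(Z(j^n\sigma)|_{V_p}))$, where $V_p \subseteq V$ is the fiber over $p \in I$; nonnegativity of each $m_p$ is automatic, and by construction $\sum_{p \in I} m_p\, p = \pi_*(c_{n+1}(\mathcal{P}))$ in $A_0(C) = \mathrm{Pic}(C)$.

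The computation of $\pi_*(c_{n+1}(\mathcal{P}))$ proceeds via the standard filtration on the principal parts bundle with successive graded pieces $\sh{L} \otimes \pi^*\omega_C^{\otimes k}$ for $k = 0, \dots, n$, yielding $c_{n+1}(\mathcal{P}) = \prod_{k=0}^n (c_1(\sh{L}) + k\pi^*[K_C])$. Writing $L := c_1(\sh{L})$ and $\omega := \pi^*[K_C]$ and using that $\omega^2 = 0$ because $\dim C = 1$, this collapses to $L^{n+1} + \binom{n+1}{2}\,\omega L^n$. The pushforward $\pi_*(L^n) = N(X, \mathcal{D}) \cdot [C]$ follows by restriction to a general fiber $\{p\} \times S$: the resulting integer is the $n$-fold self-intersection on $S$ of the divisor class $\{z : f(p) \in D_z\}$, and this is exactly the count of divisors in $\mathcal{D}$ through $n$ general points of $X$. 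For $\pi_*(L^{n+1})$, flat base change along the flat projection $\pi_X \colon X \times S \to X$ reduces the problem to $(\pi_X)_*[\mathcal{D}]^{n+1}$ on $X$; then, using the decomposition $[\mathcal{D}^{(n+1)}] = \prod_{i=1}^{n+1} p_i^*[\mathcal{D}]$ in $X^{n+1} \times S$ (with $p_i$ the appropriate projections), a second flat base change along $\pi_{X^{n+1}} \colon X^{n+1} \times S \to X^{n+1}$ across the Cartesian square involving the small diagonal $\delta$ gives $(\pi_X)_*[\mathcal{D}]^{n+1} = \delta^*(\pi_{X^{n+1}})_*[\mathcal{D}^{(n+1)}] = \sh{H}(X, \mathcal{D})$. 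Combining these computations yields the main identity of the theorem.

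For the positivity dichotomy: if $S \to \mathrm{Hilb}(X)$ is generically finite, every fiber $V_p$ is $0$-dimensional and the localized contribution at each $p$ is a strictly positive integer (the local multiplicity of a regular zero, possibly summed over several $z$). In the opposite case, $V$ contains positive-dimensional components at each $p \in I$ arising from the positive-dimensional fibers of $S \to \mathrm{Hilb}(X)$ sitting over the corresponding divisor, and along such a component the defining section $\sigma$ is, up to a scalar on $S$, pulled back from $C$, forcing the localized Chern-class contribution to vanish by a Segre-class computation. The main technical obstacle I anticipate is precisely this last step: pointwise vanishing $m_p = 0$ on excess components is strictly stronger than the global identity $\sum m_p p \sim 0$, and requires careful local analysis of the normal cone of $V$ in $C \times S$ along each positive-dimensional component of $V$.
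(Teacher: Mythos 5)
Your overall architecture coincides with the paper's: the section of the relative jet bundle $\jj^n_{C\times S/S}f_S^*\sL$ on $C\times S$, its localized top Chern class split over the points of $I$, the filtration giving $c_{n+1}=\prod_k(c_1(\sL)+k\,\pi^*K_C)$, and the identification of the pushforwards of $c_1(\sL)^n$ and $c_1(\sL)^{n+1}$ with $N(X,\fami D)$ and $f^*{\sh H}(X,\fami D)$ — so the rational-equivalence identity \eqref{main equation} is obtained essentially as in the paper. The genuine gaps are in your treatment of the multiplicities. The claim that ``nonnegativity of each $m_p$ is automatic'' is false: the zero scheme $V$ is typically of excess dimension — already for the classical Pl\"ucker setup; e.g.\ for $t\mapsto(t^3,t^4)$ in $\PP^2$ with $\fami D$ all lines, the fiber $V_p$ at $t=0$ is an entire $\PP^1$ of lines with contact $\geq 3$ — and the localized top Chern class along an excess component is a Segre-class expression with no general effectivity. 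This is precisely the delicate point of the paper: it runs Fulton's Lemma 3.2 construction of the localized class through the complete flag filtration of the jet bundle, notes that all graded pieces restrict on $\{p\}\times S$ to $\sL|_{\{p\}\times S}$ (the $\omega_{C\times S/S}^{\otimes k}$ factors die on a fiber), and proves that this line bundle is ``algebraically base-point free,'' which allows the contribution at $p$ to be represented, up to algebraic equivalence, by a proper and hence nonnegative intersection.

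The same example undermines your positivity argument: generic finiteness of $S\to\mathrm{Hilb}(X)$ does \emph{not} force the fibers $V_p$ to be zero-dimensional, so you cannot get $m_p>0$ by counting isolated zeros with positive local multiplicity. The paper instead shows that when every divisor occurs for only finitely many $z\in S$, the bundle $\sL|_{\{p\}\times S}$ has positive degree on every curve in $S$, and feeds this into the same filtration argument to get $m_p>0$ (in fact $m_p$ at least the number of connected components of $V_p$). For the vanishing case you only sketch a normal-cone/Segre computation and flag it yourself as the main obstacle; the paper sidesteps any such local analysis by proving functoriality of the inflection divisor under maps of parameter spaces $S'\to S$ (via Fulton, Prop.\ 14.1(d)(iii)) and pushing $\fami D$ forward to its image in $\mathrm{Hilb}(X)$: when that image has dimension $<n$, the degree factor is zero and the whole divisor $\mathbf I$ vanishes, giving $m_p=0$ for all $p$. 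So your strategy is the right one and the intersection-theoretic identity is in place, but the three assertions about the $m_p$ — nonnegativity, positivity, and vanishing — are exactly where the real work lies, and none of them is established by the proposal as written.
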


Note that in fact the lefthand side of \eqref{main equation} is a divisor
while the righthand side is a divisor class, so it would be more precise to say that the lefthand side is an element of the righthand side.
The theorem still holds for $S$ possibly reducible, nonreduced, or of impure dimension, and $\dim S \leq n$, with only routine adjustments to the definitions of the two quantities on the right hand side and to the statement in the last sentence, cf. Theorem \ref{thm:main theorem} below. 

Our construction is very functorial, as follows.

\begin{prop}[Proposition \ref{prop:inflection divisors are functorial}]\label{prop:functorial} In Situation \ref{sit:basic}, we
also have:
\begin{ilist}
\itm Given also $X'$ smooth and projective, 
${\fami D}' \subseteq X' \times S$ a flat family of divisors on $X'$,
and $g:X \to X'$ such that $\fami D = g^{-1} \fami D'$, then $N(X,\fami D)=N(X',\fami D')$, and ${\sh H}(X,\fami D)=g^* {\sh H}(X',\fami D')$. 
If we let $I' \subseteq C$
be the set of inflection points of $g \circ f$ relative to $\fami D'$,
and $m'_p$ for $p \in I'$ the induced multiplicities, then we also have
$I'=I$, and for each $p \in I$ we have $m'_p=m_p$.
\itm If instead we are also given $S'$ a projective variety also
of dimension $n$ and 
$h:S' \to S$, and
write $\fami D'=h^{-1} \fami D \subseteq X \times S'$, 
then
$N(X,\fami D')= \deg h \cdot N(X,\fami D)$, and 
${\sh H}(X,\fami D')= {\sh H}(X,\fami D)^{\otimes \deg h}$.
If we let $I' \subseteq C$
be the set of inflection points of $f$ relative to $\fami D'$,
and $m'_p$ for $p \in I'$ the induced multiplicities, then we also have
$I' \subseteq I$ and in fact $I'=I$ if $h$ is dominant, and for each $p \in I'$ we have $m'_p= \deg h \cdot m_p$.
\end{ilist}
\end{prop}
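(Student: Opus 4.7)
My plan is to reduce each functoriality statement to a base-change compatibility for the fiber products $\fami D^{\times_S m}$, which underpin the definitions of $N$, $\sh H$, and (as I expect) the multiplicities $m_p$ themselves. In both parts the strategy is the same: first identify the Cartesian square relating the relevant fiber products, then read off the identities for $N$ and $\sh H$ by degree considerations and pushforward/pullback compatibility, and finally handle the inflection points and their multiplicities.

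For part (i), the hypothesis $\fami D = g^{-1}\fami D'$ produces, for every $m \geq 1$, a Cartesian square
\[
\begin{array}{ccc}
\fami D^{\times_S m} & \longrightarrow & (\fami D')^{\times_S m} \\
\downarrow & & \downarrow \\
X^m & \xrightarrow{g^m} & (X')^m
\end{array}
\]
whose vertical arrows are $\mathrm{ev}_m$ and $\mathrm{ev}'_m$ and whose top arrow is induced by $g$. Taking $m = n$, both vertical maps are generically finite with identified generic fibers, so $N(X,\fami D) = N(X',\fami D')$. Taking $m = n+1$, proper-pushforward/pullback compatibility in the square gives $\alpha = (g^{n+1})^*\alpha'$; since $g^{n+1}$ intertwines the two small diagonals, pulling back by $\delta$ yields $\sh H(X,\fami D) = g^*\sh H(X',\fami D')$. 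The inflection loci coincide, $I = I'$, because for every $z \in S$ we have $f^*\fami D_z = (g \circ f)^*\fami D'_z$ as Cartier divisors on $C$.

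For part (ii), since $\fami D' = \fami D \times_S S'$, we have $(\fami D')^{\times_{S'} m} \cong \fami D^{\times_S m} \times_S S'$ for every $m$, so $\mathrm{ev}'_m$ factors as $\mathrm{ev}_m$ composed with the projection to $\fami D^{\times_S m}$, which is generically finite of degree $\deg h$. Setting $m=n$ yields $N(X,\fami D') = \deg h \cdot N(X,\fami D)$, while $m = n+1$ gives $\alpha' = \deg h \cdot \alpha$ and hence the claimed multiplicative identity for $\sh H$. Any $\fami D'$-inflection point $p$ arises from some $z' \in S'$ with $\fami D'_{z'} = \fami D_{h(z')}$, making $p$ a $\fami D$-inflection, so $I' \subseteq I$; dominance of $h$ yields the reverse inclusion.

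The main obstacle will be establishing that the multiplicities themselves transform as claimed, not merely their totals or the class of the divisor $\sum m_p p$. For this I expect to need the explicit construction of the inflection divisor from the proof of Theorem \ref{main theorem}, which (per the introduction) is built from a localized top Chern class. Both identities should then follow from functoriality of that localized class under the two Cartesian squares above: in case (i) the relevant excess structure on $C$ is literally pulled back from the $X'$-side so the localized class is preserved, while in case (ii) the generically finite map between fiber products distributes a factor of $\deg h$ uniformly over each point of $I'$.
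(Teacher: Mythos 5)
Your route for the numerical identities is genuinely different from the paper's, but as written it has gaps. First, the step ``both vertical maps are generically finite with identified generic fibers, so $N(X,\fami D)=N(X',\fami D')$'' only works when $g$ is dominant: the Cartesian square identifies fibers of $\mathrm{ev}_n$ over points of $X^n$ with fibers of $\mathrm{ev}'_n$ over points of $g(X)^n$, and when $g$ is not dominant (e.g.\ a closed immersion, a perfectly natural case allowed by the statement) these are not generic fibers of $\mathrm{ev}'_n$, so no conclusion about $\deg \mathrm{ev}'_n$ follows. Likewise, ``proper-pushforward/pullback compatibility in the square gives $\alpha=(g^{n+1})^*\alpha'$'' is not formal: commuting refined Gysin maps with proper pushforward gives $(g^{n+1})^!\,\mathrm{ev}'_{n+1,*}[{\fami D'}^{\,n+1}_S]=\mathrm{ev}_{n+1,*}\,(g^{n+1})^![{\fami D'}^{\,n+1}_S]$, and you still owe the identification $(g^{n+1})^![{\fami D'}^{\,n+1}_S]=[{\fami D}^{\,n+1}_S]$, a no-excess/multiplicity statement that is exactly the technical content the paper supplies by rewriting $[{\fami D}^m_S]$ as a product of first Chern classes capped with $[X^m\times S]$ (Lemma \ref{lem:diagonals}), expressing $\fh_m$ intrinsically as $\mathrm{proj}_{1,*}\left(c_1(\sh L)^m\cap[X\times S]\right)$ (Proposition \ref{prop:h-classes}), and only using push-pull for the projection squares where the rank-zero excess condition is verified (Corollary \ref{push pull commute}, Lemma \ref{lem:hi-functorial}). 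Your part (ii) factorization through ${\fami D}^m_S\times_S S'$ is closer to the paper's argument and is fine in spirit, modulo the same care about pushing forward fundamental classes of possibly nonreduced fiber powers.

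On the multiplicities: for (i) your remark is essentially the paper's proof --- the line bundle and section on $C\times S$ are literally identical for $(X,\fami D,f)$ and $(X',\fami D',g\circ f)$, so $I$, $\fami I$ and every $m_p$ coincide; no functoriality of localized classes is needed. For (ii), however, the mechanism is not functoriality under the Cartesian squares you drew over $X^m$: the relevant map is $\mathbf{1}_C\times h\colon C\times S'\to C\times S$, along which $\tilde s'$ is the pullback of $\tilde s$, and $m'_p=\deg h\cdot m_p$ comes from compatibility of the localized top Chern class with proper pushforward of degree $\deg h$ (\cite[Proposition 14.1 (d)(iii)]{[Fu98]}), applied separately over each $p\in I'$ via the decomposition $\fami I=\bigsqcup_{p}\fami I_p$ (and, in the general setting, via decomposition into components of $S'$ as in Propositions \ref{prop:inflection divisors decompose} and \ref{prop:inflection divisors are functorial}). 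Since you leave this, the substantive half of the statement, at the level of ``should follow,'' the proposal has a genuine gap there as well.
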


The corresponding statement is more complicated when $S$ is reducible,
but is given in general in Proposition \ref{prop:inflection divisors are functorial}.

We will also show:

\begin{thm}\label{thm:rhp-agree} The righthand side of
\eqref{main equation} specializes to the classical Riemann-Hurwitz and
Pl\"ucker formulas in the special cases that $X$ is a curve, or that 
$X$ is a projective space, respectively. Moreover, our general construction 
of $m_p$ recovers the usual multiplicities occurring in these formulas.
\end{thm}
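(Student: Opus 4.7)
The plan is to handle the two classical cases separately. In each I will identify the natural choice of $(S, \fami D)$, compute $N(X,\fami D)$ and $\sh H(X,\fami D)$ explicitly to verify that the RHS of \eqref{main equation} reduces to the classical formula, and then match the multiplicities.

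For Riemann-Hurwitz, where $X$ is a curve, I would take $S = X$ and $\fami D = \Delta_X \subset X \times X$, so divisors in $\fami D$ are precisely points of $X$. Here $n = 1$, and the condition in Definition \ref{defn:inflection intro} that $f^*D$ has multiplicity $\geq 2$ coincides with classical ramification. The map $\mathrm{ev}_1$ is an isomorphism, so $N(X,\fami D) = 1$. The two-fold fiber product is again $\Delta_X \cong X$, mapping to $X \times X$ as the diagonal, so $\alpha = [\Delta_X]$; by self-intersection, $\sh H(X,\fami D) = \delta^*[\Delta_X] = c_1(N_{\Delta_X / X \times X}) = c_1(T_X) = -K_X$. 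The RHS of \eqref{main equation} then becomes $-f^*K_X + K_C$, which is the Riemann-Hurwitz ramification class.

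For Pl\"ucker, where $X = \PP^N$, I would take $S = (\PP^N)^{\vee}$ with $\fami D$ the universal incidence divisor, so $n = N$ and the inflection condition (some hyperplane pulls back with multiplicity $\geq N+1$ at $p$) is the classical one. Since $N$ general points of $\PP^N$ lie on a unique hyperplane, $N(\PP^N, \fami D) = 1$. The image of $\mathrm{ev}_{N+1}$ in $(\PP^N)^{N+1}$ is the determinantal divisor of $(N+1)$-tuples lying on a common hyperplane, cut out by the vanishing of the $(N+1) \times (N+1)$ determinant in the homogeneous coordinates, which is multihomogeneous of degree $(1,\ldots,1)$. Hence $\alpha = \sum_{i=1}^{N+1} H_i$, and pulling back by the small diagonal gives $\sh H(\PP^N, \fami D) = (N+1)H$. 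The RHS is $(N+1)\, f^*H + \binom{N+1}{2} K_C$, exactly the classical Pl\"ucker formula.

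What remains, and what I expect to be the main obstacle, is matching the multiplicities $m_p$ with the classical ones. I would unwind the localized top Chern class definition of $m_p$ from the main body of the paper in local coordinates at a single inflection point. In the Riemann-Hurwitz case, $f$ locally looks like $t \mapsto t^{e_p}$, and the excess coming from the fact that $\fami D = \Delta_X$ is ``tautological'' should yield $m_p = e_p - 1$. In the Pl\"ucker case, after choosing a basis of sections of $f^*\sh O(1)$ adapted to the vanishing sequence $0 \leq a_0 < \cdots < a_N$ at $p$, the localized Chern class computation should stratify according to this sequence and reproduce the classical inflection weight $\sum_i (a_i - i)$. The technical heart will be verifying that the abstract excess-intersection recipe of the main theorem produces these concrete combinatorial weights, particularly in the Pl\"ucker case where several orders of tangency have to be tracked simultaneously.
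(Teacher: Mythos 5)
Your computation of the right-hand side in both cases matches the paper's: $S=X$ with $\fami D=\Delta_X$ giving $N=1$, $\sh H=-K_X$ via the self-intersection of the diagonal, and $S=(\PP^N)^{\vee}$ with the incidence divisor giving $N=1$, $\sh H=\sh O_{\PP^N}(N+1)$ via the determinantal divisor. That part is correct and is exactly how the paper proceeds.

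The genuine gap is the second half of the statement, which you yourself flag as ``the main obstacle'' and then do not carry out: the identification of the localized-top-Chern-class multiplicities $m_p$ with the classical ones. This is not a routine unwinding, because the zero locus $\fami I$ over an inflection point $p$ is typically positive-dimensional (in the Pl\"ucker case it is a linear subspace of $(\PP^N)^{\vee}$ of hyperplanes osculating to order $\geq N+1$), so one is in a genuinely excess situation and ``stratify according to the vanishing sequence'' is not yet an argument. The paper handles this by first using $\jj^N_{C\times S/S}f_S^*\sL=\bigl(\jj^N_C f^*\sh O(1)\bigr)\boxtimes\sh O_S(1)$ (Proposition \ref{prop:jet-basic}) and then proving a separate lemma, by induction on the rank with explicit local coordinates, showing that for a section $s$ of $\sh E\boxtimes\sh O_{\PP V}(1)$ with zero locus finite over $C$, the part of the localized top Chern class lying over $p$ has degree $\ord_p\det\bar s$, where $\bar s:V\otimes\sh O_C\to\sh E$ is the corresponding bundle map; the inductive step requires splitting the zero divisor of one linear form into a multiple of the fiber $\{p\}\times\PP V$ plus a residual hyperplane bundle and tracking both contributions. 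That lemma, applied with $\sh E=\jj^N_C f^*\sh O(1)$, is what matches $m_p$ with the order of vanishing of the Wronskian determinant as in \cite[Proposition 1.1]{[EH83]}; nothing in your proposal substitutes for it. A similar (much easier, but still needed) local computation is required in the Riemann--Hurwitz case, where the paper shows the section of $\jj^1_{C\times X/X}\sh O_{C\times X}(\Gamma_f)$ is locally $(v-g(u),-g'(u))$ and hence $m_p=\ord_p g'(u)$.

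Two smaller inaccuracies in your stated targets: the paper works in arbitrary characteristic, so the classical multiplicities to be recovered are $\ord_p g'(u)$ (the vanishing order of $df$), which equals $e_p-1$ only for tame ramification, and in the Pl\"ucker case the vanishing order of the Wronskian, which can exceed $\sum_i(a_i-i)$ in positive characteristic.
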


Finally, our main theorem only applies when a morphism has finitely
many inflection points, so it is natural to analyze when one can
guarantee that this will be the case. In fact, it turns out to be 
possible for a particular $\fami D$ to have that \emph{every} morphism
$f:C \to X$ is everywhere ramified with respect to $\fami D$. In this
case, our formula still provides a `virtual' number of inflection
points, but this number is never realized! In
\S \ref{sec:degenerate}, we discuss these issues, and in particular we 
apply a theorem of Ein, Musta\cb{t}\u{a} and Yasuda to show that provided the 
divisors $\fami D_z$ aren't too singular for any $z \in S$, there will 
always be morphisms 
$f$ which are not everywhere ramified with respect to $\fami D$.

Our fundamental technique throughout is to study the inflection in terms 
of a section of a suitable rank-$(n+1)$ jet bundle on $C \times S$. We obtain
the righthand side of \eqref{main equation} by computing the top Chern
class of this bundle, and we use the notion of localized Chern classes
relative to a section in order to define the multiplicities. What makes
this rather delicate is that the zero set of the section in question 
is frequently positive-dimensional, so that for instance positivity
of the multiplicities is not automatic. Indeed, this happens already in the
situation of the classical Pl\"ucker formula. We are therefore forced
to make direct use of the construction of localized Chern classes given
in \cite[Lemma 3.2]{[Fu98]}, taking advantage of the usual filtration of
jet bundles to obtain some control over the geometry.

\begin{rmk}\label{rem:linear-series-case} Our construction enjoys various
functoriality properties, and in particular if $\fami D$ happens to be
a base-point free linear series on $X$, then considering the composed
morphism $C \to X \to \PP^n$, our formula will simply recover the usual
Pl\"ucker formula. From this point of view, our formula is most interesting
in cases that $\fami D$ varies non-trivially in an algebraic equivalence 
class. However, considering nonlinear subfamilies of linear series can
also lead to instructive examples; see for instance Example 
\ref{ex:dual-curve} below.
\end{rmk}

\subsection*{Conventions} All varieties (and in particular curves) are
assumed irreducible, and in particular connected. For intersection theory, we will rely heavily on Fulton's book \cite{[Fu98]}. For a finite type separated scheme $Y$ over $\kk$, we write $A_*(Y) = \bigoplus A_k(Y)$, as customary. If $W$ is a closed subscheme of $Y$, then the fundamental class of $W$ in $Y$ is $\smash{ [W] = \sum_{\alpha} m_\alpha [W_\alpha] \in A_*(Y) }$, where $\smash{ W_\mathrm{red} = \bigcup_\alpha W_\alpha }$ is the decomposition of $W$ into irreducible components and $m_\alpha$ is the length of $W$ at the generic point of $W_\alpha$. The $k$th Chern classes of vector bundles ${\sh E}$ over $Y$ are operators $A_*(Y) \to A_{*-k}(Y)$, and we write $c_k(\sh E) \cap \alpha$ for $c_k(\sh E)(\alpha)$, following \cite{[Fu98]}. Finally, $\prod_{i=1}^n c_{k_i}({\sh E}_i) \cap \alpha$ means $c_{k_1}({\sh E}_1) \cap \left( \cdots \cap \left( c_{k_n}({\sh E}_n) \cap \alpha \right) \cdots \right)$ by definition, though order is unimportant by \cite[Theorem 3.2.(b)]{[Fu98]}.

\section{Preliminaries}\label{2.0} Before getting started on the actual proof of Theorem \ref{main theorem}, we record for future use a version of the fact that pushforwards and pullbacks commute in intersection theory, granted that certain conditions hold. We haven't strived for generality -- on the contrary, this is an exercise in specializing results from Fulton's book \cite{[Fu98]}.  

\begin{prop}\label{prop:not used}
Let $X_1,X_2,X_3,X_4$ be separated finite type schemes over $\kk$ and consider a diagram
\begin{center}
\begin{tikzpicture}
\matrix [column sep  = 20mm, row sep = 7mm] {
	\node (nw) {$X_4$}; &
	\node (ne) {$X_3$};  \\
	\node (sw) {$X_2$}; &
	\node (se) {$X_1$}; \\
};
\draw[->, thin] (nw) -- (ne);
\draw[->, thin] (ne) -- (se);
\draw[->, thin] (nw) -- (sw);
\draw[->, thin] (sw) -- (se);

\node at (0,-0.4) {$\xi_{21}$};
\node at (0,0.9) {$\xi_{43}$};
\node at (-1.65,0.1) {$\xi_{42}$};
\node at (1.7,0.1) {$\xi_{31}$};

\end{tikzpicture}
\end{center}
Assume that:
\begin{enumerate} 
	\item the diagram is cartesian; 
	\item $\xi_{31}$ is proper and hence so is $\xi_{42}$; 
	\item $\xi_{21}$ and $\xi_{43}$ are lci morphisms; and 
	\item the excess normal bundle \cite[page 113]{[Fu98]} of the 
		diagram has rank $0$.
\end{enumerate}
Then $\xi_{21}^* \circ \xi_{31,*} = \xi_{42,*} \circ \xi_{43}^*$ as group homomorphisms $A_*(X_3) \to A_*(X_2)$. We refer to this by saying that \emph{pushforward and pullback commute}. 
\end{prop}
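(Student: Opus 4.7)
This proposition is a restatement of Fulton's excess intersection formula in the case when the excess normal bundle has top Chern class equal to $1$, and the plan is essentially to quote two results from \cite{[Fu98]} and combine them. The main content is bookkeeping: tracking down which form of each statement applies in the lci generality (rather than the regular-embedding setting in which Fulton first proves everything).

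First, since $\xi_{21}$ is lci, Chapter 6 of \cite{[Fu98]} produces a refined Gysin homomorphism $\xi_{21}^{!} : A_*(X_3) \to A_*(X_4)$ attached to the cartesian square (extending the construction for regular embeddings via the factorization of an lci morphism through a regular embedding followed by a smooth morphism, as in \S 6.6). The compatibility of refined Gysin maps with proper pushforward (the lci generalization of \cite[Theorem 6.2(a)]{[Fu98]}, together with assumption (2)) then yields the identity
$$ \xi_{21}^{*} \circ \xi_{31,*} \;=\; \xi_{42,*} \circ \xi_{21}^{!} $$
as operators $A_*(X_3) \to A_*(X_2)$, where on the left $\xi_{21}^{*}$ denotes the (absolute) lci pullback.

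Second, since $\xi_{43}$ is also lci, the refined Gysin map $\xi_{21}^{!}$ and the lci pullback $\xi_{43}^{*}$ can both be described via the respective virtual normal bundles. The excess intersection formula (\cite[Theorem 6.3]{[Fu98]} and its lci extension) gives, for every $\alpha \in A_*(X_3)$,
$$ \xi_{21}^{!}(\alpha) \;=\; c_{\mathrm{top}}(E) \cap \xi_{43}^{*}(\alpha), $$
where $E$ is the excess normal bundle of the diagram. Assumption (4) says $\mathrm{rank}\, E = 0$, so $c_{\mathrm{top}}(E) = c_0(E) = 1$, hence $\xi_{21}^{!} = \xi_{43}^{*}$. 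Substituting this into the first displayed identity gives $\xi_{21}^{*} \circ \xi_{31,*} = \xi_{42,*} \circ \xi_{43}^{*}$, as desired.

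The only real obstacle is that the pieces of machinery used here (refined Gysin maps for lci morphisms, compatibility with proper pushforward, and the excess intersection formula) are developed in Fulton primarily in the setting of regular embeddings in Chapter 6 and then extended to lci morphisms in \S 6.6 and Chapter 17; one has to be careful that the lci structures on $\xi_{21}$ and the base-changed morphism $\xi_{43}$ fit together so that the excess normal bundle is genuinely well defined, which is guaranteed by the cartesian assumption (1) together with the lci hypothesis (3). No new ideas beyond this bookkeeping are needed.
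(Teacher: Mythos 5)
Your proposal is correct and follows essentially the same route as the paper: identify $\xi_{21}^{!}$ with $\xi_{43}^{*}$ via the excess intersection formula (using the rank-$0$ hypothesis, Fulton Proposition 6.6(c)/Theorem 6.3), and combine with the compatibility of refined Gysin maps with proper pushforward (Proposition 6.6(c)/Theorem 6.2(a)); the only difference is the order in which the two ingredients are applied.
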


In the above, $\xi_{21}^*$ and $\xi_{43}^*$ are unrefined Gysin maps, cf. condition (3).

\begin{proof}
The general form of the excess intersection formula \cite[Proposition 6.6.(c), so see also Theorem 6.3]{[Fu98]} shows that $ \xi_{43}^*: A_*(X_3) \to A_*(X_4)$ coincides with $\xi_{21}^!: A_*(X_3) \to A_*(X_4)$. 
When invoking the excess intersection formula, we imagine a second (trivial) cartesian square on top of ours. Thus we've boiled down the claim to 
$ \xi_{21}^! \circ \xi_{31,*} = \xi_{42,*} \circ \xi_{21}^!$, 
which is \cite[Proposition 6.6.(c), so see also Theorem 6.2.(a)]{[Fu98]} if we draw another trivial cartesian square, this time under ours.
\end{proof}

Proposition \ref{prop:not used} will be used several times in the following form.

\begin{cor}\label{push pull commute}
If $X,X'$ are smooth varieties, $f:X' \to X$ is an arbitrary map, and $S$ is a proper $\kk$-scheme, then the diagram
\begin{center}
\begin{tikzpicture}
\matrix [column sep  = 20mm, row sep = 7mm] {
	\node (nw) {$X' \times S$}; &
	\node (ne) {$X \times S$};  \\
	\node (sw) {$X'$}; &
	\node (se) {$X$.}; \\
};
\draw[->, thin] (nw) -- (ne);
\draw[->, thin] (ne) -- (se);
\draw[->, thin] (nw) -- (sw);
\draw[->, thin] (sw) -- (se);

\node at (0,-0.4) {$f$};
\node at (0,0.9) {$f \times {\mathbf 1}_S$};
\node at (-2.25,0) {$\mathrm{proj}_{X'}$};
\node at (2.25,0) {$\mathrm{proj}_{X}$};

\end{tikzpicture}
\end{center}
satisfies conditions (1) -- (4) in Proposition \ref{prop:not used} and in particular pushforward and pullback commute.
\end{cor}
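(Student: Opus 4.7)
The approach is straightforward: verify each of the four hypotheses of Proposition \ref{prop:not used} for this square in turn, then invoke that proposition. Condition (1), cartesianness, is the canonical identification $X' \times_X (X \times S) \cong X' \times S$, with the map $X \times S \to X$ being base-changed being $\mathrm{proj}_X$. Condition (2), properness of $\mathrm{proj}_X$, is immediate by base change from $S \to \mathrm{Spec}\,\kk$.

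For condition (3), I would use the graph factorization: $f : X' \to X$ decomposes as $X' \xrightarrow{\Gamma_f} X' \times X \xrightarrow{\pi_2} X$, where $\Gamma_f$ is a section of the smooth first projection $X' \times X \to X'$ and is therefore a regular embedding of codimension $\dim X$, while $\pi_2$ is smooth. Thus $f$ is lci. Base-changing this factorization along $\mathrm{proj}_X : X \times S \to X$ exhibits $f \times {\mathbf 1}_S$ as $X' \times S \hookrightarrow X' \times X \times S \to X \times S$, again of the form (regular embedding of codimension $\dim X$) composed with a smooth morphism, so $f \times {\mathbf 1}_S$ is also lci.

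For condition (4), in the base-changed factorization the normal bundle of the regular embedding $X' \times S \hookrightarrow X' \times X \times S$ is canonically identified with the $\mathrm{proj}_{X'}$-pullback of the normal bundle of $\Gamma_f$ (both being the appropriate pullback of $T_X$), so the comparison map entering Fulton's definition on p.~113 of the excess normal bundle is an isomorphism, and the excess has rank $0$. The only mild subtlety is that Fulton's p.~113 definition is stated for regular embeddings, so one must first spell out its extension to an lci $\xi_{21}$ via the chosen factorization; once this is done, the calculation above is immediate. Applying Proposition \ref{prop:not used} then completes the proof.
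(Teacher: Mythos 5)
Your proposal is correct and follows essentially the same route as the paper: conditions (1)--(2) are immediate, lci-ness of $f$ and $f \times \mathbf{1}_S$ is obtained via the graph factorization through $X' \times X$ (resp.\ $X' \times X \times S$), and condition (4) is checked by observing that the normal bundle of the base-changed regular embedding is the pullback of the normal bundle of $\Gamma_f$ (both being pullbacks of $T_X$), so the excess bundle has rank $0$.
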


\begin{proof} Conditions (1) and (2) in Proposition \ref{prop:not used} are automatic. Regarding condition (3), the following easy criterion for lci-ness suffices for our purposes: if $Y$, $Z$ are smooth and separated over $S$, then any $S$-morphism $Y \to Z$ is lci. Indeed, factor $Y \to Z$ as $Y \to Y \times_S Z \to Z$ via the `graph map' and note that the former map is a regular closed immersion \cite[B.7.3]{[Fu98]}, while the latter map is smooth, so $Y \to Z$ is an lci morphism, as desired. Finally, the diagram 
\begin{center}
\begin{tikzpicture}
\matrix [column sep  = 15mm, row sep = 4mm] {
	\node (nw) {$X' \times S$}; &
	\node (ne) {$X' \times X \times S$};  &
	\node (nee) {$X \times S$}; \\
	\node (sw) {$X'$}; &
	\node (se) {$X' \times X$}; &
	\node (see) {$X$}; \\
};
\draw[->, thin] (nw) -- (ne);
\draw[->, thin] (ne) -- (nee);
\draw[->, thin] (ne) -- (se);
\draw[->, thin] (nw) -- (sw);
\draw[->, thin] (sw) -- (se);
\draw[->, thin] (se) -- (see);
\draw[->, thin] (nee) -- (see);
\end{tikzpicture}
\end{center}
illustrates that (4) also holds in this situation, cf. \cite[Proposition 6.6.(c) and diagram (**)]{[Fu98]}.
\end{proof}

In a different direction, we describe some situations in which fundamental
classes behave well. 

\begin{prop}\label{prop:fund-classes} Let $X$ and $S$ be finite-type 
$\kk$-schemes, with $X$ pure-dimensional. 

\begin{ilist}
\itm If $f:X' \to X$ is an lci morphism, and both $X$ and $X'$ are 
varieties, then 
	$$(f\times \mathbf{1}_S)^*[X \times S]=[X' \times S].$$
\itm If $D$ is an effective Cartier divisor on $X \times S$, flat over $S$, 
and $\sL$ the corresponding line bundle, then 
$$[D]=c_1(\sL)\cap [X \times S] \text{ in } A_*(X \times S).$$
\end{ilist}
\end{prop}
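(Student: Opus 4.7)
For part (i), the plan is to reduce to the identity $f^*[X] = [X']$ by factoring through the projections $\mathrm{proj}_X : X \times S \to X$ and $\mathrm{proj}_{X'} : X' \times S \to X'$. Since $f$ is lci, so is its base change $f \times \mathbf{1}_S$, and by functoriality of Gysin and flat pullbacks along the composition $\mathrm{proj}_X \circ (f \times \mathbf{1}_S) = f \circ \mathrm{proj}_{X'}$ (cf.\ \cite[Proposition 6.5]{[Fu98]}) we obtain $(f \times \mathbf{1}_S)^* \circ \mathrm{proj}_X^* = \mathrm{proj}_{X'}^* \circ f^*$ as operators on Chow groups. Flat pullback from a variety returns the fundamental cycle of the total space, giving $\mathrm{proj}_X^*[X] = [X \times S]$ and $\mathrm{proj}_{X'}^*[X'] = [X' \times S]$, so the claim reduces to $f^*[X] = [X']$. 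For this identity, factor $f = p \circ i$ locally with $i : X' \hookrightarrow Y$ a regular closed embedding (via \cite[B.7.3]{[Fu98]}) and $p : Y \to X$ smooth; then $p^*[X] = [Y]$ by flat pullback, and $i^*[Y] = [X']$ by the standard computation of the refined Gysin pullback for a regular embedding of varieties of the expected codimension (the normal cone coincides with the normal bundle).

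For part (ii), the plan is to apply Fulton's classical identity $c_1(\mathcal{O}(D')) \cap [Y] = [D']$ --- valid on a variety $Y$ whenever $D'$ is an effective Cartier divisor not containing $Y$ (cf.\ \cite[Section 2.3]{[Fu98]}) --- componentwise to the decomposition $[X \times S] = \sum_i m_i [W_i]$ from the paper's conventions, in which each $W_i$ has the form $X_\beta \times S_\alpha$ for components $X_\beta$ of $X$ and $S_\alpha$ of $S$. The crucial geometric input is that flatness of $D$ over $S$ prevents $D$ from containing any such $W_i$: the fiber of $D \to S$ at a generic point of $S_\alpha$ is a Cartier divisor on $X$, which cannot contain any component $X_\beta$. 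Applying the classical identity on each $W_i$ and summing with multiplicities yields $c_1(\sL) \cap [X \times S] = \sum_i m_i [D \cap W_i]$ in $A_*(X \times S)$, and it remains to identify this with $[D]$ in the paper's fundamental-cycle convention.

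The main obstacle lies in this last identification: for every irreducible component $V$ of $D$, one must match the coefficient $m_i \cdot \mathrm{length}(\mathcal{O}_{D \cap W_i, V})$, for the unique $W_i$ containing $V$, with $\mathrm{length}(\mathcal{O}_{D, V})$. This is a local computation at the generic point of $V$ in $X \times S$: since $D$ contains no $W_i$, the Cartier divisor $D \cap W_i$ is a genuine hypersurface in $W_i$ near $V$, and multiplicativity of lengths in the Artinian local ring $\mathcal{O}_{X \times S, V}$ --- factoring through the quotient $\mathcal{O}_{W_i, V}$ --- produces the desired equality. The analogous potential obstacle in (i), namely $f^*[X]=[X']$, is milder because both $X$ and $X'$ are assumed to be varieties, which forces the regular embedding in the local factorization to have the expected codimension.
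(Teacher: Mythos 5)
Part (ii) is where the genuine gap lies, and it is twofold. First, you place the flatness hypothesis in the wrong spot: an effective Cartier divisor \emph{never} contains an irreducible component of the ambient scheme, since its local equations are nonzerodivisors and hence lie in no minimal (indeed no associated) prime; so the non-containment $W_i \not\subseteq D$ is automatic and uses nothing. What flatness must actually rule out --- and what makes the statement fail for general non-pure-dimensional ambient schemes, which is exactly the failure mode of \cite[Example 2.6.4]{[Fu98]} that the paper points to --- is that the cycle $\sum_i m_i [D \cap W_i]$ may acquire components that are \emph{not} components of $D$: over a component $S_\alpha$ of smaller dimension, a component $Z$ of $D \cap (X_\beta \times S_\alpha)$ either dominates $S_\alpha$ (and is then a component of $D$) or, by a dimension count, has the form $X_\beta \times T$; the latter is excluded only because the fibers of a divisor flat over $S$ are Cartier divisors on $X$ and hence cannot contain $X_\beta$. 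Your write-up never addresses these possible extra terms, so the ``last identification'' is set up incorrectly. Second, the coefficient matching is misstated: a component $V$ of $D$ need not lie on a \emph{unique} $W_i$. The hypotheses allow $X$ reducible; with $S$ a point (so flatness is vacuous), $X = V(xy) \subseteq \mathbb{A}^3$ and $D = V(x+y)$, the unique component $V$ of $D$ lies on both planes, $\mathrm{length}\,{\mathcal O}_{D,V} = 2$, while each single term $m_i \cdot \mathrm{length}\,{\mathcal O}_{D \cap W_i, V}$ equals $1$. The correct identity is $\mathrm{length}\,{\mathcal O}_{D,V} = \sum_{i\,:\,V \subseteq W_i} m_i\, \ord_V\!\left(D|_{W_i}\right)$, the standard additivity for a nonzerodivisor in a one-dimensional local ring; this, together with the absence of extra components, is exactly what the paper obtains in one stroke by localizing near the generic point of the component of $S$ dominated by each component of $D$ (this is where the paper uses flatness) and then applying \cite[Proposition 2.6(d)]{[Fu98]} in the resulting pure-dimensional situation.

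Part (i) has the right core idea but quietly assumes away the case the proposition is designed for. Flat pullback in \cite{[Fu98]}, and its compatibility with lci Gysin maps in a fiber square, is defined for flat morphisms of a \emph{fixed} relative dimension; if $S$ has components of unequal dimensions (allowed here, and needed later in the paper), then $\mathrm{proj}_X\colon X \times S \to X$ is not such a morphism, so neither $\mathrm{proj}_X^*[X] = [X \times S]$ nor the asserted commutation can be quoted as stated. The repair is to first write $[X \times S] = \sum_\alpha \mu_\alpha [X \times S_\alpha]$ over the components of $S$ (this is where the hypothesis that $X$ and $X'$ are varieties enters), prove $(f \times \mathbf{1}_{S_\alpha})^*[X \times S_\alpha] = [X' \times S_\alpha]$ --- your reduction to $f^*[X] = [X']$ via the lci factorization does work on each variety $X \times S_\alpha$, and is essentially \cite[Example 6.2.1]{[Fu98]}, which is what the paper cites --- and then compare the Gysin map over $S_\alpha$ with the one over $S$ and push the classes into $A_*(X' \times S)$ via \cite[Theorem 6.2 (a) and (c)]{[Fu98]}, which is the paper's route. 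Two smaller points: the factorization of $f$ through a regular embedding followed by a smooth morphism is part of the definition of an lci morphism and should not be attributed to \cite[B.7.3]{[Fu98]} (here $X$ is not assumed smooth), and the fiber-square compatibility you invoke is among the compatibilities of \cite[Theorem 6.2]{[Fu98]} and \cite[Proposition 6.6]{[Fu98]}, not Proposition 6.5.
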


In the above, $f^*$ denotes Gysin pullback.
Note that both statements can fail in general if $X \times S$ is replaced
by a more general non-pure-dimensional scheme; see for instance
\cite[Example 2.6.4]{[Fu98]}.

\begin{proof} For (i), write $[S]=\sum_{\alpha} \mu_{\alpha} [S_{\alpha}]$,
where the $S_{\alpha}$ are the irreducible components of $S$. Then because
$X$ and $X'$ are varieties, we have the cycle expression
$$[X \times S]= \sum_{\alpha} \mu_{\alpha} [X \times S_{\alpha}]$$
and similarly for $X'$.
It thus suffices to see that 
	$$(f \times \mathbf{1}_S)^*[X \times S_{\alpha}] = [X' \times S_{\alpha}]
\text{ in } A_*(X' \times S)$$
for all $\alpha$. But 
	$(f \times \mathbf{1}_{S_\alpha})^* [X \times S_{\alpha}]=[X' \times S_{\alpha}]$
in $A_*(X' \times S_{\alpha})$
by \cite[Example 6.2.1]{[Fu98]}, and then we obtain the desired formula
by applying \cite[Theorem 6.2 (a) and (c)]{[Fu98]} to move from
	$(f \times \mathbf{1}_{S_{\alpha}})^*$ to $(f \times \mathbf{1}_S)^!$ and from
$A_*(X' \times S_{\alpha})$ to $A_*(X' \times S)$ respectively.

For (ii), since $D$ is assumed flat over $S$, every component of $D$ 
dominates a component of $S$, and the calculation of both sides can be done 
in a neighborhood of the generic point of that component of $S$. Thus we have
reduced to the pure-dimensional case, and can apply 
\cite[Proposition 2.6(d)]{[Fu98]}.
\end{proof}

We also have the following basic statement on Gysin maps and Chern classes.

\begin{prop}\label{prop:chern-gysin} Let $f:X' \to X$ be an lci morphism of
$\kk$-schemes, and $\sh E$ a vector bundle on $X$. Then for all $k \geq 0$
and $\alpha \in A_*(X)$, we have
$$f^*(c_m(\sh E) \cap \alpha)=c_m(f^* \sh E) \cap f^*(\alpha).$$
\end{prop}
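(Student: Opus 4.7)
The plan is to reduce the identity to two standard compatibilities in \cite{[Fu98]} via the local factorization of an lci morphism. Since $f$ is lci, it factors locally on $X'$ as $X' \xrightarrow{i} Y \xrightarrow{p} X$ with $i$ a regular closed embedding and $p$ smooth, and $f^* = i^* \circ p^*$ on Gysin maps (cf.\ \cite[\S 6.6]{[Fu98]}). Since the asserted formula is one of Chow classes on $X'$ and both sides are compatible with restriction to open subsets, it suffices to establish it assuming such a factorization exists globally.

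For the smooth morphism $p$, flat pullback commutes with Chern classes by \cite[Theorem 3.2(d)]{[Fu98]}, yielding
$$p^*(c_m(\sh E) \cap \alpha) = c_m(p^* \sh E) \cap p^* \alpha.$$
For the regular closed embedding $i$, the analogous identity
$$i^*(c_m(\sh F) \cap \beta) = c_m(i^* \sh F) \cap i^* \beta$$
for any vector bundle $\sh F$ on $Y$ and $\beta \in A_*(Y)$ is a special case of \cite[Proposition 6.3]{[Fu98]}, which establishes the corresponding compatibility for the refined Gysin map with respect to an arbitrary fiber square (we apply it to the trivial square where both horizontal arrows are identities). Chaining these two identities with $\sh F = p^* \sh E$ and $\beta = p^* \alpha$, and using $i^* p^* \sh E = f^* \sh E$, yields the required formula.

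The main (and very mild) obstacle is simply to pin down the correct form of the Chern class compatibility for the refined Gysin map in the unrefined situation we need; the remainder is a routine composition of the two standard compatibilities, as befits a result the authors describe as a \emph{basic statement}.
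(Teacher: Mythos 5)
Your core argument is the same as the paper's: reduce to the $Y'=Y$ case of \cite[Proposition 6.3]{[Fu98]} for the regular imbedding and to \cite[Theorem 3.2(d)]{[Fu98]} for the smooth (hence flat) part, then chain the two via $f^* = i^* \circ p^*$ (the paper cites \cite[Proposition 6.6.(b)]{[Fu98]} for the lci bookkeeping). However, your reduction from a local to a global factorization is not valid as stated: equality of classes in $A_*(X')$ cannot be checked on an open cover, because rational equivalence is not a local condition -- the restriction maps sit in exact sequences $A_k(Z) \to A_k(X') \to A_k(X' \setminus Z) \to 0$, and a nonzero class can die on every member of a cover (e.g.\ the class of a point on $\PP^1$ restricts to zero on each piece of the standard affine cover, since $A_0(\mathbb{A}^1)=0$). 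So the sentence ``both sides are compatible with restriction to open subsets, hence it suffices to treat a global factorization'' is a genuine logical gap.

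Fortunately the step is also unnecessary and should simply be deleted: in Fulton's framework, which is the one in which the Gysin map $f^*$ in the statement is even defined, an lci morphism is by definition one admitting a \emph{global} factorization into a regular closed imbedding followed by a smooth morphism (cf.\ \cite[B.7.6]{[Fu98]} and \S 6.6 there, which also gives independence of the choice), and this is how the paper uses the notion (e.g.\ the graph factorization in Corollary \ref{push pull commute}). With the localization sentence removed and the global factorization taken as given, your chaining of the two standard compatibilities is exactly the intended proof.
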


In the above, $f^*$ denotes Gysin pullback on Chow classes, while
$f^* \sh E$ is just vector bundle pullback.

\begin{proof} This is just the generalization of 
(the $Y'=Y$ case of) \cite[Proposition 6.3]{[Fu98]} to lci morphisms,
which follows by combining it with
\cite[Theorem 3.2(d) and Proposition 6.6.(b)]{[Fu98]}.
\end{proof}

We mention a consequence which will be relevant to us in relating the
statements we have given in the introduction to those in the main body
of the paper.

\begin{cor}\label{cor:divisor-pullback} Let $X,X'$ be smooth varieties,
and $f:X' \to X$ a morphism. Then Gysin pullback of divisors agrees with
pullback of the corresponding line bundles.
\end{cor}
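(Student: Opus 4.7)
The statement asserts that for a morphism $f: X' \to X$ between smooth varieties and an effective Cartier divisor $D$ on $X$ with associated line bundle $\sL = \mathcal{O}_X(D)$, the Gysin pullback $f^*[D] \in A_*(X')$ equals $c_1(f^* \sL) \cap [X']$. My plan is simply to chain together the three results that immediately precede the corollary, all specialized to the case where the auxiliary base $S$ is a point.

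First I would note that $f: X' \to X$ is an lci morphism: this is the criterion invoked in the proof of Corollary \ref{push pull commute}, applied with $S = \operatorname{Spec} \kk$, which says that any morphism between schemes smooth and separated over $S$ is lci. In particular, the Gysin pullback $f^*$ is defined on $A_*(X)$, and the hypotheses of Propositions \ref{prop:fund-classes} and \ref{prop:chern-gysin} are satisfied.

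Next, I would apply Proposition \ref{prop:fund-classes}(ii) with $S = \operatorname{Spec} \kk$ to write $[D] = c_1(\sL) \cap [X]$ in $A_*(X)$. Taking Gysin pullback and using Proposition \ref{prop:chern-gysin}, we get
\[
f^*[D] = f^*\bigl(c_1(\sL) \cap [X]\bigr) = c_1(f^*\sL) \cap f^*[X].
\]
Finally, Proposition \ref{prop:fund-classes}(i) with $S = \operatorname{Spec} \kk$ gives $f^*[X] = [X']$ (since $X'$ is a variety and $f$ is lci), which combines with the previous display to yield $f^*[D] = c_1(f^*\sL) \cap [X']$, as desired.

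There is no real obstacle; the content of the corollary is essentially the observation that the three preceding propositions fit together in the expected way once $S$ is taken to be a point. The only minor point worth flagging is the lci-ness of $f$, which is not part of the hypotheses of the corollary but follows from the smoothness assumption via the criterion already used in Corollary \ref{push pull commute}.
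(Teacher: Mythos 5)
Your argument is essentially identical to the paper's: the paper likewise chains Proposition \ref{prop:fund-classes} (i) and (ii) (with $S$ a point) and Proposition \ref{prop:chern-gysin}, with lci-ness of $f$ coming from the criterion in Corollary \ref{push pull commute}. The only step you omit is the paper's opening reduction from arbitrary divisors to effective ones by taking differences, which is needed since the corollary concerns all divisors, but this is immediate.
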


Note that $f$ is automatically lci by Corollary \ref{push pull commute},
so Gysin pullback makes sense.

\begin{proof} By taking differences of divisors we reduce to the case
of an effective divisor $D$ with corresponding line bundle $\sL$.
Using the case that $S$ is a point of Proposition \ref{prop:fund-classes} 
(i) and (ii), together with Proposition \ref{prop:chern-gysin}, we have 
\begin{equation*} f^*[D] = f^* (c_1(\sL)\cap [X]) = c_1(f^* \sL) \cap f^* [X] = c_1(f^* \sL) \cap [X'],
\end{equation*}
which is precisely what is asserted.
\end{proof}

We will also make extensive use of relative jet bundles, so we take the
opportunity to recall a couple of standard facts about them. Note that
formation of jet bundles does not commute with pullback in general;
however, relative jet bundles do behave well with respect to base change.

\begin{prop}\label{prop:jet-basic} Let $X$ be an $S$-scheme, and $\sh F$
a coherent sheaf on $X$, and $n \geq 0$. Then
\begin{ilist}
\itm for all morphisms $S' \to S$, if $X' = X \times_S S'$, and $f:X' \to X$
is the induced morphism, then we have
$$\jj^n_{X'/S'}(f^* \sh F)=f^* \jj^n_{X/S}(\sh F);$$
\itm for all line bundes $\sh M$ on $S$, if $\pi:X \to S$ is the structure
morphism, we have 
$$\jj^n_{X/S}(\sh F \otimes \pi^* \sh M)=\left(\jj^n_{X/S}(\sh F)\right) \otimes
\pi^* \sh M.$$
\end{ilist}
\end{prop}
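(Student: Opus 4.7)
The plan is to work from the standard description of the relative jet bundle as
\[ \jj^n_{X/S}(\sh F) = p_{1*}\bigl((\sh O_{X \times_S X}/\sh I^{n+1}) \otimes p_2^*\sh F \bigr), \]
where $\Delta: X \hookrightarrow X \times_S X$ is the diagonal, $\sh I$ is its ideal sheaf, and $p_1, p_2: X \times_S X \to X$ are the two projections; let $V_n$ denote the closed subscheme cut out by $\sh I^{n+1}$, so that $p_1|_{V_n}$ is finite and in particular affine.

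For (i), the key step is to observe that $X' \times_{S'} X' = (X \times_S X) \times_S S'$, so the natural morphism $X' \times_{S'} X' \to X \times_S X$ is the base change of $S' \to S$. Under this base change, $\Delta_{X'/S'}$ and its ideal sheaf pull back from $\Delta_{X/S}$ and $\sh I$, and $p_2^*\sh F$ pulls back to the analogous sheaf constructed from $X'/S'$ and $f^*\sh F$. What remains is to commute $f^*$ past $p_{1*}$, which is legal here because $p_1|_{V_n}$ is affine (indeed finite), so the usual flatness hypothesis for base change of pushforward may be dropped.

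For (ii), the central observation is that $\pi \circ p_1 = \pi \circ p_2$ as morphisms $X \times_S X \to S$, whence $p_1^*\pi^*\sh M = p_2^*\pi^*\sh M$. Using this together with the fact that $\sh M$ is a line bundle, I would rewrite
\[ (\sh O_{X \times_S X}/\sh I^{n+1}) \otimes p_2^*(\sh F \otimes \pi^*\sh M) = \bigl((\sh O_{X \times_S X}/\sh I^{n+1}) \otimes p_2^*\sh F\bigr) \otimes p_1^*\pi^*\sh M, \]
and then apply the projection formula for $p_{1*}$ (valid since $\pi^*\sh M$ is locally free) to conclude. I expect the only step requiring real care to be the assertion in (i) that pushforward by an affine morphism commutes with \emph{arbitrary} base change, not just flat base change; the remainder is routine bookkeeping with the definition and the projection formula.
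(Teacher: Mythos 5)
Your proof is correct, but it takes a different route from the paper, whose entire proof is a citation: for (i) the paper simply refers to EGA IV, Proposition 16.4.5, and for (ii) it observes that the claim is immediate from the definition of the jet bundle and the projection formula. Your argument for (ii) is exactly that one-line argument made explicit: the whole content is that $\pi \circ p_1 = \pi \circ p_2$, so $p_2^*\pi^*\sh M = p_1^*\pi^*\sh M$, and the locally free sheaf $\pi^*\sh M$ can then be pulled across $p_{1*}$ by the projection formula. For (i) you have in effect supplied a proof of the cited EGA statement rather than quoting it: identify $X'\times_{S'}X'$ with $(X\times_S X)\times_S S'$, check that the diagonal ideal and its $(n+1)$st power pull back to the corresponding objects for $X'/S'$ (locally this is just $(IB)^{n+1}=I^{n+1}B$ for a ring map $A\to B$ and ideal $I\subseteq A$), and then commute $f^*$ past $p_{1*}$ using that pushforward of quasi-coherent sheaves along an affine morphism commutes with arbitrary base change; that last fact is true without any flatness hypothesis, so the step you flag as delicate is indeed fine. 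Two small points of hygiene: finiteness of $V_n\to X$ requires a finiteness hypothesis on $X/S$ (it amounts to coherence of $\jj^n_{X/S}(\sh O_X)$), but affineness is all your argument uses, and that holds in general since locally $V_n$ sits as a closed subscheme of $U\times_W U$ for affine opens $U\subseteq X$ mapping into an affine open $W\subseteq S$; and if $X/S$ is not separated the diagonal is only locally closed, so the construction should be carried out on an open neighbourhood of the diagonal --- harmless here, since the paper only applies the proposition in separated (projective) situations.
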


\begin{proof} For (i), see \cite[Proposition 16.4.5]{[GD67]}.
(ii) is an immediate consequence of the definition of jet bundles and
the projection formula for tensoring with a locally free sheaf.
\end{proof}

\section{Intersection theoretic calculations}

The ultimate goal of this section is to recover the righthand side of 
\eqref{main equation} as the top Chern class of a natural vector bundle 
constructed from $X$, $\fami D$, $C$ and $f$.
In the first part of this section, we introduce some intersection theoretic invariants of families of divisors, which slightly generalize the quantities $N$ and ${\sh H}$ defined in the introduction. We then analyze these invariants
intrinsically on $X$, and conclude by bringing $C$ and $f$ into the 
picture and computing the relevant Chern class.

Note that from this point on, we allow $S$ to be considerably more general
than was considered in the introduction.

\begin{sit}\label{sit:actual}
Let $X$ be a smooth projective variety over $\kk$, $S$ a projective 
$\kk$-scheme,
and ${\fami D} \subset X \times S$ a flat family of divisors over $S$.
Let $(\sh L,s)$ be a line bundle and section on $X \times S$ realizing 
$\fami D$.
\end{sit} 

\begin{defn}[$\fh$-classes]\label{defn:hs} In Situation \ref{sit:actual}, for any natural number $m$, let ${\fami D}_S^m \subseteq X^m \times S$ be the $m$-fold fiber power of ${\fami D}$ over $S$.
Let $\mathrm{proj}_m:X^m \times S \to X^m$ be the projection morphism, and
$\delta_m:X \to X^m$ the small immersion diagonal.
Then let 
$$ {\fh}_m(X,{\fami D})=\delta_m^* 
\mathrm{proj}_{m,*} [{\fami D}^m_S] \in A_*(X). $$
Here, $\delta_m^*$ is the Gysin construction for regular imbeddings \cite[Example 5.2.1]{[Fu98]}.
\end{defn}

Note that Definition \ref{defn:hs} recovers $N(X,{\fami D})$ and ${\sh H}(X,{\fami D})$ from \S\ref{S: intro}.

\begin{prop}\label{prop:NH-h} In Situation \ref{sit:basic}, we have 
$\fh_{n}(X,{\fami D}) = N(X,{\fami D})$ and $\fh_{n+1}(X,{\fami D}) = {\sh H}(X,{\fami D})$. 
\end{prop}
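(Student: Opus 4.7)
The plan is to unwind both sides of each claimed equality and recognize them as consequences of two earlier results. The key initial observation is that $\mathrm{proj}_m$ restricted to $\fami D_S^m$ is precisely the evaluation morphism $\mathrm{ev}_m$ from the introduction, so
\[ \mathrm{proj}_{m,*}[\fami D_S^m] = \mathrm{ev}_{m,*}[\fami D_S^m] \in A_*(X^m). \]
Throughout I will freely use the identifications $A_{\dim X}(X) = \zz \cdot [X]$ (since $X$ is irreducible and of dimension $\dim X$) and $A_{(n+1)\dim X - 1}(X^{n+1}) = \mathrm{Pic}(X^{n+1})$ (since $X^{n+1}$ is smooth), both of which are standard.

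For $\fh_n$, both $\fami D_S^n$ and $X^n$ have dimension $n \dim X$, and $X^n$ is irreducible, so by the definition of degree of a morphism we have $\mathrm{ev}_{n,*}[\fami D_S^n] = N(X,\fami D) \cdot [X^n]$. The small diagonal $\delta_n: X \to X^n$ is a closed immersion of smooth varieties, hence lci, so Proposition \ref{prop:fund-classes}(i) applied with trivial $S$ gives $\delta_n^*[X^n] = [X]$. Combining these,
\[ \fh_n(X,\fami D) = \delta_n^* \mathrm{proj}_{n,*}[\fami D_S^n] = N(X,\fami D) \cdot [X], \]
which equals $N(X,\fami D)$ under the identification $A_{\dim X}(X) = \zz$.

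For $\fh_{n+1}$, the pushforward $\mathrm{proj}_{n+1,*}[\fami D_S^{n+1}]$ lives in $A_{(n+1)\dim X - 1}(X^{n+1})$, and under the identification above it is precisely the class $\alpha \in \mathrm{Pic}(X^{n+1})$ defined in the introduction. Hence $\fh_{n+1}(X,\fami D) = \delta_{n+1}^*\alpha$ where $\delta_{n+1}^*$ is Gysin pullback on Chow classes, whereas the introduction defines $\sh H(X,\fami D) = \delta^*\alpha$ using pullback of line bundles. These agree by Corollary \ref{cor:divisor-pullback}, applied to the morphism $\delta_{n+1}: X \to X^{n+1}$ between smooth varieties. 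There is no real obstacle in this proof: it is entirely a matter of matching the Chow-theoretic and line-bundle-theoretic formalisms, and the only substantive inputs beyond unwinding definitions are Proposition \ref{prop:fund-classes}(i) and Corollary \ref{cor:divisor-pullback}, both of which are already established.
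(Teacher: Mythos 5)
Your proof is correct and takes essentially the same route as the paper, which disposes of the first identity as an abuse of notation via $A_{\dim X}(X)=\zz$ and of the second by applying Corollary \ref{cor:divisor-pullback} to the small diagonal; you have merely made explicit the intermediate steps (identifying $\mathrm{proj}_{m,*}[{\fami D}_S^m]$ with the pushforward along $\mathrm{ev}_m$, and using $\delta_n^*[X^n]=[X]$).
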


For the first identity we are abusing notation in view of the canonical identification $A_{\dim X}(X) = \zz$, while the latter statement is immediate from
Corollary \ref{cor:divisor-pullback} applied to $\delta_m$.

We begin by expressing $[\fami D^m _S]$ as an intersection.

\begin{lem}\label{lem:diagonals}
If $\pi_\ell:X^m \to X$ is projection to the $\ell$th factor, then
\begin{equation}\label{eqn:diagonals} [{\fami D}_{S}^m] = \prod_{\ell = 1}^m c_1\left( (\pi_\ell \times {\mathbf 1}_S)^*{\sh L} \right) \cap \left[ X^m \times S \right] \in A_*(X^m \times S). \end{equation}
\end{lem}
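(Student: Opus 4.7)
The plan is to exhibit $\fami D_S^m$ as the zero locus of $m$ Cartier divisors and peel them off one at a time via Proposition~\ref{prop:fund-classes}(ii). For $\ell = 1,\dots,m$, write $D_\ell := (\pi_\ell \times \mathbf{1}_S)^{-1}\fami D \subseteq X^m\times S$ for the pullback divisor, with associated line bundle $\sh L_\ell := (\pi_\ell \times \mathbf{1}_S)^*\sh L$, and let $E_k := D_1 \cap \cdots \cap D_k$ for $0 \le k \le m$, so that $E_0 = X^m \times S$ and $E_m = \fami D_S^m$. I will prove by induction on $k$ that
$$[E_k] = \prod_{\ell=1}^k c_1(\sh L_\ell) \cap [X^m \times S] \text{ in } A_*(X^m \times S),$$
of which the case $k = m$ is the assertion of the lemma. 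The base case $k = 0$ is trivial.

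The heart of the inductive step is the following observation. After permuting factors, there is a canonical isomorphism $E_{k-1} \cong X^{m-k+1} \times A_{k-1}$, where $A_{k-1} := \fami D \times_S \cdots \times_S \fami D$ is the $(k-1)$-fold fiber power (with its natural map to $S$), with the $X^{m-k+1}$ factor recording coordinates $(x_k, x_{k+1}, \dots, x_m)$ and the $A_{k-1}$ factor recording $(x_1, \dots, x_{k-1}, z)$. Under this identification $E_k \subset E_{k-1}$ is the preimage of $\fami D$ under the map $X^{m-k+1} \times A_{k-1} \to X \times S$ sending $((x_k,\dots,x_m),(x_1,\dots,x_{k-1},z)) \mapsto (x_k,z)$, and one has the further identification $E_k \cong (A_{k-1} \times_S \fami D) \times X^{m-k}$. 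Since $\fami D \to S$ is flat, base-changing along $A_{k-1} \to S$ and then taking a product with $X^{m-k}$ preserves flatness, so $E_k \to A_{k-1}$ is flat; its fibers are of the form $\fami D_z \times X^{m-k}$, of codimension $1$ in the corresponding fibers $X^{m-k+1}$ of $E_{k-1} \to A_{k-1}$. Together with flatness of both $E_{k-1}$ and $E_k$ over $A_{k-1}$, this shows that $E_k$ is an effective Cartier divisor in $E_{k-1}$ cut out by the restriction of the section $(\pi_k \times \mathbf{1}_S)^*s$, hence with associated line bundle $\sh L_k|_{E_{k-1}}$.

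We are now set up to apply Proposition~\ref{prop:fund-classes}(ii) with the roles of ``$X$'' and ``$S$'' played by $X^{m-k+1}$ (smooth, hence pure-dimensional) and $A_{k-1}$ respectively: this yields $[E_k] = c_1(\sh L_k|_{E_{k-1}}) \cap [E_{k-1}]$ in $A_*(E_{k-1})$. Pushing forward along the closed immersion $\iota\colon E_{k-1} \hookrightarrow X^m \times S$ and invoking the projection formula $\iota_*(c_1(\iota^*\sh L_k) \cap -) = c_1(\sh L_k) \cap \iota_*(-)$ transports this identity to $[E_k] = c_1(\sh L_k) \cap [E_{k-1}]$ in $A_*(X^m \times S)$. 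Combining with the inductive hypothesis for $E_{k-1}$ closes the induction.

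I expect the main obstacle to be organizational rather than conceptual: namely, setting up the factor-permutation identification $E_{k-1} \cong X^{m-k+1} \times A_{k-1}$ cleanly so that the hypotheses of Proposition~\ref{prop:fund-classes}(ii) are manifest (pure-dimensional first factor, divisor flat over the second factor), and verifying the effective Cartier divisor property of $E_k$ in $E_{k-1}$ without any smoothness or Cohen–Macaulay hypothesis on $S$. Once these are in place, everything else is formal.
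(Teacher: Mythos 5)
Your proof is correct and follows essentially the same route as the paper: both arguments peel off one factor at a time by viewing the next condition as a flat family of divisors on $X$ over the partial fiber power (your $X^{m-k+1}\times A_{k-1}$ is the paper's $X^\ell \times \fami D_S^{m-\ell}$ up to reordering the factors), apply Proposition~\ref{prop:fund-classes}(ii) there, and transport the identity to $A_*(X^m\times S)$ by the projection formula. Your extra verification that $E_k$ is a relative effective Cartier divisor (flatness plus Cartier fibers, or equivalently base change of the relative Cartier divisor $\fami D\subseteq X\times S$ along $A_{k-1}\to S$) is exactly the point the paper leaves implicit when it says the subscheme "may be thought of" as a flat family of divisors.
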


\begin{proof}
First consider the case $m=1$. Then \eqref{eqn:diagonals} simply reads $[ {\fami D} ] = c_1({\sh L}) \cap [X \times S]$, which is Proposition
\ref{prop:fund-classes} (ii).
For general $m$, first note that we may think of $\smash{ X^{\ell-1} \times {\fami D}_S^{m-\ell+1} \subset X^\ell \times {\fami D}_S^{m-\ell} }$ as a flat family of divisors on $X$ over $\smash{ X^{\ell -1} \times {\fami D}_S^{m-\ell} }$, so, by a special case of the $m=1$ case above, we have
\begin{equation}\label{eqn:D inductive step} \left[X^{\ell-1} \times {\fami D}_S^{m-\ell+1} \right] = c_1\left( (\pi_\ell \times {\mathbf 1}_S)^*{\sh L} \right) \cap \left[ X^\ell \times {\fami D}_S^{m-\ell} \right] \end{equation}
in $A_*(X^\ell \times {\fami D}_S^{m-\ell})$, and hence in $A_*(X^m \times S)$ by the projection formula \cite[Proposition 2.5.(c)]{[Fu98]}. Then \eqref{eqn:diagonals} follows applying \eqref{eqn:D inductive step} inductively. 
\end{proof}

We now give an alternate expression for the classes ${\fh}_m(X,\fami D)$.

\begin{prop}\label{prop:h-classes} In Situation \ref{sit:actual}, for any $m$, we have 
$$ {\fh}_m(X,{\fami D}) =\mathrm{proj}_{1,*}\left( c_1({\sh L})^m \cap [X \times S] \right).$$
\end{prop}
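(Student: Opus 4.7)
The plan is to combine the three preparatory results from Section 2 with Lemma \ref{lem:diagonals} in a short chain of manipulations. The starting point is the definition
$$\fh_m(X,\fami D) = \delta_m^* \mathrm{proj}_{m,*}[\fami D^m_S],$$
and the goal is to rewrite this as a pushforward along $\mathrm{proj}_1 : X \times S \to X$ of $c_1(\sh L)^m \cap [X \times S]$. So morally, I want to push the Gysin pullback $\delta_m^*$ inside the pushforward $\mathrm{proj}_{m,*}$, after which Lemma \ref{lem:diagonals} should collapse everything.

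\textbf{Step 1: swap pullback and pushforward.} Consider the evident cartesian square
$$
\begin{array}{ccc}
X \times S & \xrightarrow{\delta_m \times \mathbf{1}_S} & X^m \times S \\
\mathrm{proj}_1 \downarrow & & \downarrow \mathrm{proj}_m \\
X & \xrightarrow{\delta_m} & X^m.
\end{array}
$$
Since $X$ and $X^m$ are smooth, this falls into the setting of Corollary \ref{push pull commute}, giving
$$\fh_m(X,\fami D) = \delta_m^* \mathrm{proj}_{m,*}[\fami D^m_S] = \mathrm{proj}_{1,*}\, (\delta_m \times \mathbf{1}_S)^*\, [\fami D^m_S].$$

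\textbf{Step 2: expand $[\fami D^m_S]$ via Lemma \ref{lem:diagonals} and pull back.} By Lemma \ref{lem:diagonals},
$$(\delta_m \times \mathbf{1}_S)^*[\fami D^m_S] = (\delta_m \times \mathbf{1}_S)^* \Bigl(\prod_{\ell = 1}^m c_1\bigl((\pi_\ell \times \mathbf{1}_S)^* \sh L\bigr) \cap [X^m \times S]\Bigr).$$
Since $\delta_m : X \to X^m$ is a regular embedding of smooth varieties and hence lci, Proposition \ref{prop:chern-gysin} lets me commute the Gysin pullback past the Chern classes, yielding
$$\prod_{\ell = 1}^m c_1\bigl((\delta_m \times \mathbf{1}_S)^*(\pi_\ell \times \mathbf{1}_S)^* \sh L\bigr) \cap (\delta_m \times \mathbf{1}_S)^* [X^m \times S].$$
Now $\pi_\ell \circ \delta_m = \mathbf{1}_X$ for every $\ell$, so each pulled-back line bundle is just $\sh L$; and Proposition \ref{prop:fund-classes} (i) (applicable because $X$ and $X^m$ are varieties and $\delta_m$ is lci) gives $(\delta_m \times \mathbf{1}_S)^*[X^m \times S] = [X \times S]$. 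Therefore
$$(\delta_m \times \mathbf{1}_S)^*[\fami D^m_S] = c_1(\sh L)^m \cap [X \times S].$$

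\textbf{Step 3: conclude.} Plugging this into the identity from Step 1 gives
$$\fh_m(X,\fami D) = \mathrm{proj}_{1,*} \bigl(c_1(\sh L)^m \cap [X \times S]\bigr),$$
as desired.

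I do not anticipate a serious obstacle: the entire argument is a careful bookkeeping exercise that threads together Corollary \ref{push pull commute}, Proposition \ref{prop:chern-gysin}, Proposition \ref{prop:fund-classes} (i), and Lemma \ref{lem:diagonals}. The only place where one must be a bit attentive is the distinction between Gysin pullback and ordinary (sheaf) pullback when commuting past Chern classes, and checking that the hypotheses of Proposition \ref{prop:fund-classes} (i) hold for $\delta_m$; both are immediate from smoothness of $X$ and $X^m$.
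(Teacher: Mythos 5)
Your proof is correct and follows essentially the same route as the paper: both use Corollary \ref{push pull commute} to commute $\delta_m^*$ with $\mathrm{proj}_{m,*}$, then reduce $(\delta_m \times \mathbf{1}_S)^*[\fami D^m_S]$ to $c_1(\sh L)^m \cap [X \times S]$ via Lemma \ref{lem:diagonals}, Proposition \ref{prop:chern-gysin} applied iteratively, and Proposition \ref{prop:fund-classes}~(i). The only difference is cosmetic — you perform the push-pull swap first and the paper does it last.
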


\begin{proof} Let $ \delta_m:X \to X^m$ be the diagonal immersion. By Corollary \ref{push pull commute}, $\delta_m \times {\mathbf 1}_S$ is an lci morphism. By Lemma \ref{lem:diagonals}, we have $ (\delta_m \times {\mathbf 1}_S)^*\left[ {\fami D}_S^m \right] = c_1({\sh L})^m \cap [X \times S] $ 
in $A_*(X \times S) $
using Proposition \ref{prop:chern-gysin} 
iteratively and Proposition \ref{prop:fund-classes} (i). The proposition then follows from Corollary \ref{push pull commute}.
\end{proof}

The following trivial but convenient property can be deduced, for instance, from Proposition \ref{prop:h-classes} and the projection formula \cite[Proposition 2.5.(c)]{[Fu98]}.

\begin{cor}\label{cor:h-decompose}
In Situation \ref{sit:actual}, if $\smash{ S_\mathrm{red} = \bigcup_{\alpha \in N} S_\alpha }$ is the decomposition of $S$ into irreducible components, $\mu_\alpha$ is the length of ${\sh O}_{S,\eta_\alpha}$ where $\eta_\alpha \in S$ is the generic point of $S_\alpha$, and $\smash{ {\fami D}_\alpha = S_\alpha \times_S {\fami D} }$, then
$$ \fh_m(X,{\fami D}) = \sum_{\alpha \in N} \mu_\alpha \fh_m(X,{\fami D}_\alpha) \in A_*(X). $$
\end{cor}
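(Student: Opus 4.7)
The plan is to follow the hint directly: apply Proposition \ref{prop:h-classes} to reduce both sides to expressions involving top powers of $c_1(\sL)$, and then decompose the fundamental class of $X \times S$ along the irreducible components of $S$.

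First, by Proposition \ref{prop:h-classes}, we have
$$\fh_m(X, \fami D) = \mathrm{proj}_{1,*}\left(c_1(\sL)^m \cap [X \times S]\right),$$
where $\mathrm{proj}_1 : X \times S \to X$ is the first projection. Since $X$ is a variety, the irreducible components of $X \times S$ are exactly the subschemes $X \times S_\alpha$, and the length of $\mathcal{O}_{X \times S}$ at the generic point of $X \times S_\alpha$ equals $\mu_\alpha$, because localization at that generic point factors through $\mathcal{O}_{S,\eta_\alpha}$ (and $X$ is reduced). Thus, writing $\iota_\alpha : X \times S_\alpha \hookrightarrow X \times S$ for the inclusion, we have the cycle-level identity
$$[X \times S] = \sum_{\alpha \in N} \mu_\alpha \, \iota_{\alpha,*} [X \times S_\alpha] \quad \text{in } A_*(X \times S).$$

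Next I would substitute this decomposition into the expression for $\fh_m(X,\fami D)$ and apply the projection formula \cite[Proposition 2.5.(c)]{[Fu98]} to the closed immersion $\iota_\alpha$:
$$c_1(\sL)^m \cap \iota_{\alpha,*}[X \times S_\alpha] = \iota_{\alpha,*}\bigl( c_1(\iota_\alpha^* \sL)^m \cap [X \times S_\alpha] \bigr).$$
Pushing forward by $\mathrm{proj}_1$ and using functoriality of proper pushforward, each resulting term equals $\mathrm{proj}_{1,\alpha,*}\bigl(c_1(\iota_\alpha^* \sL)^m \cap [X \times S_\alpha]\bigr)$, where $\mathrm{proj}_{1,\alpha} : X \times S_\alpha \to X$ is the restriction of $\mathrm{proj}_1$.

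Finally, the family $\fami D_\alpha = S_\alpha \times_S \fami D \subset X \times S_\alpha$ is realized by the pair $(\iota_\alpha^* \sL, \iota_\alpha^* s)$, so by another application of Proposition \ref{prop:h-classes} (now on $S_\alpha$), the $\alpha$-summand equals $\fh_m(X, \fami D_\alpha)$, yielding the desired formula. There is no real obstacle here; the only thing that warrants care is the cycle-level decomposition of $[X \times S]$, which relies on the irreducibility of $X$.
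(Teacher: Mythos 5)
Your proposal is correct and follows exactly the route the paper indicates: reduce via Proposition \ref{prop:h-classes} to $\mathrm{proj}_{1,*}(c_1(\sL)^m \cap [X \times S])$, decompose $[X \times S]=\sum_\alpha \mu_\alpha [X \times S_\alpha]$ (valid since $X$ is a variety, just as in the proof of Proposition \ref{prop:fund-classes}), and finish with the projection formula applied to the inclusions $X \times S_\alpha \hookrightarrow X \times S$. This is precisely the paper's (one-line) argument, merely written out in full.
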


We also prove the portion of Proposition \ref{prop:functorial} dealing with
the functoriality of $N$ and ${\sh H}$.

\begin{lem}\label{lem:hi-functorial} In Situation \ref{sit:actual}, we
also have:
\begin{ilist}
\itm Given also $X'$ smooth and projective, 
${\fami D}' \subseteq X' \times S$ a flat family of divisors on $X'$,
and $g:X \to X'$ such that $\fami D = g^{-1} \fami D'$, then ${\mathfrak h}_m(X,\fami D)=g^*{\mathfrak h}_m(X',\fami D')$ for all $m$.
\itm If instead we are also given $S'$ and 
$h:S' \to S$, suppose that each component $S'_{\beta}$ of $S'$ either 
dominates a component of $S$ or satisfies 
$\dim h(S'_{\beta})<\dim S'_{\beta}$. Then let
$\fami D'=h^{-1} \fami D \subseteq X \times S'$, 
let
$$ S_\mathrm{red} = \bigcup_{\alpha \in N} S_\alpha \quad \text{ and } \quad {S'}_\mathrm{red} = \bigcup_{\beta \in N'} S'_\beta $$ 
be the decompositions of $S$ and $S'$ into irreducible components, and let $\smash{ \mu'_\beta }$ the length of $S'$ at the generic point of $S'_\beta$. Define the degree of $h$ over an irreducible component $S_\alpha$ to be
$$ d_\alpha = \sum_{\beta} \mu'_\beta \deg (h:S'_\beta \to S_\alpha), $$
where the sum is taken over all $\beta$ such that $h(S'_\beta) = S_\alpha$. 
Then
\begin{equation}\label{eqn:functoriality in S}
{\mathfrak h}_m(X,\fami D')= \sum_{\alpha \in N} d_\alpha \cdot {\mathfrak h}_m(X,\fami D_\alpha)
\end{equation}
where ${\fami D}_\alpha = S_\alpha \times_S {\fami D}$. 
\end{ilist}
\end{lem}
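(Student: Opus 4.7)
The plan is to reduce both parts to the intrinsic description $\fh_m(X,\fami D)=\mathrm{proj}_{1,*}(c_1(\sL)^m\cap[X\times S])$ from Proposition \ref{prop:h-classes}, and then apply standard compatibilities of Chern classes, Gysin pullbacks, and proper pushforward.

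For (i), the hypothesis $\fami D=g^{-1}\fami D'$ implies that one may take $\sL=(g\times\mathbf{1}_S)^*\sL'$ with matching sections. I would then apply Corollary \ref{push pull commute} to commute $\mathrm{proj}_{1,*}$ with Gysin pullback along $g\times\mathbf{1}_S$, then Proposition \ref{prop:chern-gysin} to move $(g\times\mathbf{1}_S)^*$ inside the Chern-class operator, and finally Proposition \ref{prop:fund-classes}(i) to identify $(g\times\mathbf{1}_S)^*[X'\times S]$ with $[X\times S]$. Concatenating these three identities immediately yields $g^*\fh_m(X',\fami D')=\fh_m(X,\fami D)$.

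For (ii), since $\sL'=(\mathbf{1}_X\times h)^*\sL$, I would factor the projection $X\times S'\to X$ through $\mathbf{1}_X\times h$ and combine functoriality of proper pushforward with the projection formula \cite[Theorem 3.2(c)]{[Fu98]} to obtain
$$\fh_m(X,\fami D')=\mathrm{proj}_{1,*}\bigl(c_1(\sL)^m\cap(\mathbf{1}_X\times h)_*[X\times S']\bigr).$$
The entire problem then reduces to computing $(\mathbf{1}_X\times h)_*[X\times S']$. Decomposing $[X\times S']=\sum_\beta \mu'_\beta[X\times S'_\beta]$, by the very definition of proper pushforward of cycles the components with $\dim h(S'_\beta)<\dim S'_\beta$ contribute zero, while each component dominating some $S_\alpha$ contributes $\mu'_\beta\deg(h\colon S'_\beta\to S_\alpha)[X\times S_\alpha]$; regrouping by $\alpha$ gives exactly $\sum_\alpha d_\alpha[X\times S_\alpha]$. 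Substituting back and identifying $\mathrm{proj}_{1,*}(c_1(\sL)^m\cap[X\times S_\alpha])$ with $\fh_m(X,\fami D_\alpha)$ via the projection formula for the closed immersion $X\times S_\alpha\hookrightarrow X\times S$, together with Proposition \ref{prop:h-classes} applied to $S_\alpha$, finishes the argument.

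The main obstacle I expect is the bookkeeping in (ii): the hypothesis that every component of $S'$ either dominates a component of $S$ or has image of strictly smaller dimension is precisely what is needed to ensure that $(\mathbf{1}_X\times h)_*[X\times S']$ is a nonnegative-integer combination of the $[X\times S_\alpha]$ rather than a more general cycle, and one must verify that the combinatorial definition of $d_\alpha$ matches term-by-term what the pushforward formula produces, especially when $S$ or $S'$ is non-reduced so the multiplicities $\mu'_\beta$ interact nontrivially with the mapping degrees.
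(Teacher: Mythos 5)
Your argument is correct. Part (i) is essentially the paper's own proof: pull back the line bundle along $g\times\mathbf{1}_S$, use Proposition \ref{prop:fund-classes}(i) and Proposition \ref{prop:chern-gysin} to match the capped classes, and commute pushforward with Gysin pullback via Corollary \ref{push pull commute}, all through the lens of Proposition \ref{prop:h-classes}. For part (ii), however, your route differs in organization from the paper's. You stay entirely with the intrinsic formula $\fh_m=\mathrm{proj}_{1,*}(c_1(\sL)^m\cap[X\times S])$ and reduce everything to the single computation of $(\mathbf{1}_X\times h)_*[X\times S']$ via the projection formula, absorbing the reducible and non-reduced bookkeeping into the cycle decomposition $[X\times S']=\sum_\beta\mu'_\beta[X\times S'_\beta]$; the paper instead returns to the defining picture of the $m$-fold fiber powers, pushes $[\fami D'^m_{S'}]$ forward along $\mathbf{1}_{X^m}\times h$ using Lemma \ref{lem:diagonals}, first proves the case where $S$ and $S'$ are both varieties, and only then assembles the general statement from Corollary \ref{cor:h-decompose}. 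Your version is more economical, in effect folding Corollary \ref{cor:h-decompose} into the argument (the identification $\mathrm{proj}_{1,*}(c_1(\sL)^m\cap[X\times S_\alpha])=\fh_m(X,\fami D_\alpha)$ is exactly that corollary's mechanism, and it is legitimate since flatness of $\fami D_\alpha$ over $S_\alpha$ is preserved by base change), while the paper's version keeps the geometric interpretation in $X^m\times S$ and reuses previously established lemmas. Two small points you should make explicit: the degree $\deg(h:S'_\beta\to S_\alpha)$ must be taken to be $0$ when $\dim S'_\beta>\dim S_\alpha$ (a component can dominate a lower-dimensional component, and then its pushforward vanishes even though it appears in the sum defining $d_\alpha$), and the degree of $X\times S'_\beta\to X\times S_\alpha$ coincides with that of $S'_\beta\to S_\alpha$ because $X$ is a variety; both are routine and consistent with how the paper handles its "in both cases" step.
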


The conditions we put on case (ii) are necessary: if for instance $h:S' \to S$ is a closed immersion, then we do not obtain information on the $\mathfrak h$-classes of $S'$ from those of $S$. This is not an issue in the context of our main theorems, as only $n$-dimensional components of $S$ will give nonzero contributions in the final formula.

\begin{proof}
For (i), let ${\sh L}'$ be the line bundle on $X' \times S$ with global section $s'$ whose vanishing locus is ${\fami D}'$. Clearly, $(g \times {\mathbf 1}_S)^* {\sh L}' = {\sh L}$. By Corollary \ref{push pull commute}, $g \times {\mathbf 1}_S$ is lci. First, we have $(g \times {\mathbf 1}_S)^*[X' \times S] = [X \times S]$ by Proposition \ref{prop:fund-classes} (i), and hence
\begin{equation}\label{eq:power-pullback}
(g \times {\mathbf 1}_S)^* \! \left( c_1({\sh L}')^m \cap [X' \times S] \right) = c_1({\sh L})^m \cap [X \times S];
\end{equation}
indeed, this follows by Proposition \ref{prop:chern-gysin} iteratively.
Let $\mathrm{proj}'_1$ be the projection $X' \times S \to X'$. 
Then $\smash{ \mathrm{proj}_{1,*} \left( c_1({\sh L})^m \cap [X \times S] \right)
= g^* \mathrm{proj}'_{1,*} \left( c_1({\sh L}')^m \cap [X' \times S] \right)  }$
by \eqref{eq:power-pullback} and Corollary \ref{push pull commute}.
The conclusion follows from Proposition \ref{prop:h-classes}.

Now we prove part (ii). We begin with the case that $S$ and $S'$ are both
varieties, and $h$ is either dominant or has $\dim h(S')<\dim S'$. Then
let $d$ be the degree of $h$ over its image, so that we have 
$h_* [S']=d[S]$ in both cases. In this situation, we have
\begin{equation*}
\begin{aligned}
({\mathbf 1}_{X^m} \times h)_* [{\fami D'}_{S'}^m] 
&= ({\mathbf 1}_{X^m} \times h)_* \left\{ \prod_{\ell =1}^m c_1\left( (\pi_\ell \times {\mathbf 1}_{S'})^* ({\mathbf 1}_X \times h)^*{\sh L} \right) \cap \left[ X^m \times S' \right] \right\} \\
&= ({\mathbf 1}_{X^m} \times h)_* \left\{ \prod_{\ell =1}^m c_1\left( ({\mathbf 1}_{X^m} \times h)^*(\pi_\ell \times {\mathbf 1}_S)^* {\sh L} \right) \cap \left[ X^m \times S' \right] \right\} \\
& =   \prod_{\ell =1}^m c_1\left( (\pi_\ell \times {\mathbf 1}_S)^* {\sh L} \right)\cap ({\mathbf 1}_{X^m} \times h)_* [X^m \times S'] \\
& =  \prod_{\ell =1}^m c_1\left( (\pi_\ell \times {\mathbf 1}_S)^* {\sh L} \right)\cap d [X^m \times S] = d [{\fami D}_{S}^m]
\end{aligned}
\end{equation*}
The first line is by \eqref{eqn:diagonals} for ${\fami D}'$, the third line is obtained by iterating \cite[Proposition 2.5.(c)]{[Fu98]}, while the fourth line is by \eqref{eqn:diagonals} once more. Pushing forward to
 $X^m$ we obtain 
$\mathrm{proj}'_{m,*} [{\fami D'}_{S'}^m] 
= d \cdot \mathrm{proj}_{m,*} [{\fami D}_{S}^m]$, 
and pulling back along the diagonal immersion yields
${\mathfrak h}_m(X,\fami D')= d \cdot {\mathfrak h}_m(X,\fami D)$.

The general statement then follows from Corollary \ref{cor:h-decompose},
using that if some $S'_{\beta}$ does not dominate any $S_{\alpha}$,
then by hypothesis we have $\dim h(S'_{\beta})<\dim S'_{\beta}$,
and we can still apply our variety case with any $\alpha$ such that
$h(S'_{\beta}) \subseteq S_{\alpha}$ to conclude that
${\mathfrak h}_m(X,\fami D'_{\beta})=0$.
\end{proof}

This concludes our general discussion of $\fh$-classes. We now reintroduce
to our running situation
all the necessary components for a Riemann-Hurwitz-Pl\"ucker formula.

\begin{sit}\label{sit:actual-2} In Situation \ref{sit:actual}, suppose
further that we have fixed $n \geq \dim S$, and we have a smooth projective
curve $C$ and a morphism $f:C \to X$.
\end{sit}

In the remainder of the section, we will compute the top Chern class of a jet bundle on
$C \times S$, recovering the righthand side of \eqref{main equation}.
We also motivate this computation by observing that the set of inflection
points can be studied via the zero set of a section of this jet bundle. We begin with an analysis of inflection in a family. The definition of inflection points is essentially identical to that in Definition \ref{defn:inflection intro}.

We reiterate the definition of inflection point, even though it is no
different than in the slightly more restrictive situation treated in the
introduction.

\begin{defn} In Situation \ref{sit:actual-2}, 
an inflection point is a $\kk$-point $r \in C$ such that there exists
$z \in S(\kk)$
with $r$ having coefficient at least $n+1$ in the divisor $\fami D_z$.
\end{defn}

Our immediate task is to clarify the structure of the set of 
inflection points 
by exhibiting it as the image of the vanishing locus of a section of a 
rank-$(n+1)$ vector bundle on $C \times S$,
and making use of the theory of localized Chern classes. This will also give 
a formula for the
constructed virtual class. Let $f_S = f \times {\mathbf 1}_S$. The key observation is the following.

\begin{prop}\label{prop:ram-family} 
Let 
$\smash{ \jj^n_{C \times S/S}f_S^*{\sh L} }$ be the relative jet bundle 
of $f_S^*{\sh L}$ over $S$, and let 
$\smash{\tilde{s}}\in \Gamma(C \times S, \smash{ \jj^n_{C \times S/S}f_S^*{\sh L}})$
be the section induced by $f_S^* s$. Then the 
(set of $\kk$-points of the) zero locus of $\smash{\tilde{s}}$ is precisely
the set of pairs $(r,z) \in C(\kk) \times S(\kk)$ such that 
${\fami D}_z$ contains $r$ with coefficient at least $n+1$.

Furthermore, we have the Chern class formula
\begin{equation}\label{relatively obvious}
c_{n+1}\!\left( \jj^n_{C \times S/S}f_S^*{\sh L} \right)=
\prod_{k=0}^{n}\left( c_1(f_S^*{\sh L}) + kc_1(\omega_{C \times S/S}) \right).
\end{equation}
\end{prop}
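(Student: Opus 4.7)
The plan is to handle the two assertions separately: the first is a pointwise/fiber-wise computation using compatibility of jet bundles with base change, and the second is a direct application of Whitney's formula to the standard filtration of the jet bundle.

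For the first assertion, I would reduce to the classical case of the $n$-jet of a section of a line bundle on a curve, by restricting to the slice $C \times \{z\}$ for each $z \in S(\kk)$. Applying Proposition \ref{prop:jet-basic} (i) to the base change $\mathrm{Spec}\,\kappa(z) \hookrightarrow S$, the restriction of $\jj^n_{C \times S/S}(f_S^*\sh L)$ to $C \times \{z\}$ is naturally identified with $\jj^n_C(f^*{\sh L}_z)$, where ${\sh L}_z$ denotes the restriction of ${\sh L}$ to $X \times \{z\}$. Under this identification, $\tilde s|_{C \times \{z\}}$ corresponds to the $n$-jet of the section $f^* s_z$, where $s_z = s|_{X \times \{z\}}$. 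Since the vanishing of the $n$-jet of a section of a line bundle on a smooth curve at a point $r$ is equivalent to the section vanishing to order at least $n+1$ at $r$, this is equivalent to $r$ appearing with coefficient at least $n+1$ in $f^* D_z$, as desired.

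For the second assertion, the structure morphism $C \times S \to S$ is smooth of relative dimension $1$ (being a base change of $C \to \mathrm{Spec}\,\kk$), so $\Omega^1_{C \times S/S} = \omega_{C \times S/S}$ is an honest line bundle. The standard filtration of jet bundles then provides short exact sequences
\begin{equation*}
0 \to \omega_{C \times S/S}^{\otimes k} \otimes f_S^* \sh L \to \jj^k_{C \times S/S}(f_S^* \sh L) \to \jj^{k-1}_{C \times S/S}(f_S^* \sh L) \to 0
\end{equation*}
for $k = 1, \dots, n$, with $\jj^0_{C \times S/S}(f_S^* \sh L) = f_S^* \sh L$. Whitney's formula applied iteratively gives
\begin{equation*}
c\!\left(\jj^n_{C \times S/S}(f_S^* \sh L)\right) = \prod_{k=0}^{n} \bigl(1 + c_1(f_S^* \sh L) + k\, c_1(\omega_{C \times S/S})\bigr),
\end{equation*}
and extracting the degree-$(n+1)$ piece (which is the top Chern class since the rank is $n+1$) yields \eqref{relatively obvious}.

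Neither step presents a serious obstacle; the only mild subtlety is the bookkeeping needed to ensure that the jet bundle formation is well-behaved, which is precisely the purpose of working with the \emph{relative} jet bundle and is addressed by Proposition \ref{prop:jet-basic}. Everything else is a direct appeal to standard results.
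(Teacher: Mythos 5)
Your proposal is correct and follows essentially the same route as the paper: the first part via compatibility of the relative jet bundle with restriction to fibers $C \times \{z\}$ (Proposition \ref{prop:jet-basic} (i)), and the second via the standard filtration \eqref{jet bundle ses} and the Whitney formula. The only (cosmetic) difference is that you extract the top-degree term of the total Chern class directly from the filtration, whereas the paper routes the same computation through the K-theory class, elementary symmetric polynomials, and the line-bundle twisting formula of Lemma \ref{splitting principle lemma}; the two computations are identical in substance.
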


\begin{proof} The first statement is clear from the definitions: since we
are taking a relative jet bundle, the construction commutes with restriction
to a fiber over any $z \in S$, and then we have that $f_S^* s$ vanishes 
precisely on $f_S^{-1} \fami D_z$, so that $\smash{\tilde{s}}$ vanishes 
precisely on those $p \in C$ for which $f_S^{-1} \fami D_z$ contains an 
$n$th order thickening of $p$, as desired.

For the second, we use the well-known short exact sequence
\begin{equation}\label{jet bundle ses}
 0 \longrightarrow f_S^*{\sh L} \otimes \omega_{C \times S/S}^{\otimes k} \longrightarrow \jj^k_{C \times S/S}f_S^*{\sh L} \longrightarrow \jj^{k-1}_{C \times S/S}f_S^*{\sh L} \longrightarrow 0 
\end{equation}
which holds for each $k$.
Applying this inductively, we obtain
\begin{equation}\label{basic jet bundle formula}
\left[ \jj^n_{C \times S/S}f_S^*{\sh L} \right] = \sum_{k=0}^{n} \left[ f_S^*{\sh L} \otimes \omega_{C \times S/S}^{\otimes k} \right] \text{ in $K^\circ(C \times S)$.}
\end{equation}
Clearly, $c_1(\omega_{C \times S/S}^{\otimes k}) = kc_1(\omega_{C \times S/S})$, so the Whitney formula implies that
\begin{equation}\label{c_i explicit}
c_j\left( \bigoplus_{k=0}^{n} \omega_{C \times S/S}^{\otimes k} \right) = e_j(0,1,\ldots,n) c_1(\omega_{C \times S/S})^j, \end{equation}
where the $e_j$ denote the elementary symmetric polynomials. 
Then \eqref{relatively obvious} follows from \eqref{basic jet bundle formula},
\eqref{c_i explicit}, and the well-known 
Lemma \ref{splitting principle lemma} recalled below.
\end{proof}

\begin{lem}\label{splitting principle lemma}{\cite[page 55]{[Fu98]}}
If ${\sh L}$ is a line bundle on $Y$ and ${\sh E}$ is a vector bundle of rank $r$ on $Y$, then $c_r({\sh E} \otimes {\sh L}) = \sum_{j=0}^r c_1({\sh L})^j c_{r-j}({\sh E})$.
\end{lem}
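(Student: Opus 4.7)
My plan is to prove this using the splitting principle (in the intersection-theoretic form of \cite[Remark 3.2.3 / \S3.2]{[Fu98]}), reducing the identity to a purely combinatorial statement about elementary symmetric polynomials.

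First, I would invoke the splitting principle to produce a morphism $p:Y' \to Y$ such that $p^*:A^*(Y) \to A^*(Y')$ is injective and $p^*{\sh E}$ splits as a direct sum $\bigoplus_{i=1}^r {\sh M}_i$ of line bundles. Since both sides of the claimed identity commute with pullback (the Chern classes by functoriality, and the tensor product operation because $p^*({\sh E} \otimes {\sh L}) = p^*{\sh E} \otimes p^*{\sh L}$), it suffices to prove the identity on $Y'$, i.e., we may assume from the start that ${\sh E}=\bigoplus_{i=1}^r {\sh M}_i$.

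Next, let $\alpha_i = c_1({\sh M}_i)$ and $\ell = c_1({\sh L})$. By the Whitney sum formula applied to
$$\sh E \otimes \sh L = \bigoplus_{i=1}^r (\sh M_i \otimes \sh L),$$
we have
$$c_r({\sh E} \otimes {\sh L}) = \prod_{i=1}^r \bigl( \alpha_i + \ell \bigr).$$
Expanding this product and grouping by powers of $\ell$ gives
$$\prod_{i=1}^r (\alpha_i + \ell) = \sum_{j=0}^r \ell^j\, e_{r-j}(\alpha_1,\ldots,\alpha_r),$$
where $e_k$ is the $k$th elementary symmetric polynomial. Finally, since $c_{r-j}({\sh E}) = e_{r-j}(\alpha_1,\ldots,\alpha_r)$ by Whitney once more, the right-hand side is precisely $\sum_{j=0}^r c_1({\sh L})^j\, c_{r-j}({\sh E})$, as desired.

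There is no real obstacle here: the entire content is the splitting principle, and after that reduction the calculation is elementary algebra with the elementary symmetric polynomial identity $\prod(x_i+t)=\sum t^j e_{r-j}(x_1,\dots,x_r)$. The only place one needs to be slightly careful is in confirming that the splitting construction of \cite{[Fu98]} is set up so that pullback commutes with the Chern-class operators on $A_*(Y)$ (not just on a graded ring), which is exactly the framework of \cite[\S3.2]{[Fu98]} cited by the authors.
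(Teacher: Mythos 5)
Your proof is correct, and it is essentially the standard argument behind the paper's treatment: the paper offers no proof at all, simply citing \cite[page 55]{[Fu98]} (Fulton's Example 3.2.2, whose own justification is exactly the splitting-principle computation you give). The only minor imprecision is that Fulton's splitting construction produces a filtration of $p^*{\sh E}$ with line-bundle quotients rather than a direct-sum decomposition, but since the Whitney formula applies equally to such filtrations, your calculation goes through verbatim.
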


We are now ready to complete our Chern class calculation.

\begin{cor}\label{cor:RHS} In Situation \ref{sit:actual}, with notation as
in Proposition \ref{prop:ram-family}, we have
\begin{equation}\label{eqn:RHS}
\begin{aligned} 
\mathrm{proj}_{C \times S/C,*} & \left( c_{n+1}( \jj^n_{C\times S/S} f_S^*{\sh L} ) \cap [C \times S] \right) \\
& = f^*{\fh}_{n+1}(X,\fami D) 
+ {n+1 \choose 2} K_C \cdot f^*{\fh}_n(X,\fami D) \text{ in $A_*(C)$.}
\end{aligned}
\end{equation}
\end{cor}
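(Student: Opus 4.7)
The plan is to substitute the formula \eqref{relatively obvious} into the left-hand side of \eqref{eqn:RHS}, expand, and push forward term by term using the projection formula. Since $C \times S \to S$ is smooth of relative dimension $1$, we may canonically identify $\omega_{C\times S/S}$ with $\mathrm{pr}_C^* K_C$, where $\mathrm{pr}_C: C \times S \to C$ is the projection. Abbreviating $\lambda = c_1(f_S^* \sL)$ and $\kappa = c_1(\mathrm{pr}_C^* K_C)$, the formula \eqref{relatively obvious} becomes $\prod_{k=0}^n (\lambda + k\kappa)$.

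The key observation is that any monomial in $\lambda$ and $\kappa$ containing $\kappa^a$ with $a \geq 2$ vanishes after applying $\mathrm{pr}_{C,*}(\,-\,\cap [C\times S])$: by the projection formula, such a term contributes a factor of $K_C^a$, but $K_C^a = 0$ in $A_*(C)$ for $a \geq 2$ since $C$ has dimension one. Thus after expansion and pushforward, only the coefficients of $\lambda^{n+1}$ and $\lambda^n \kappa$ survive, equal to $1$ and $\sum_{k=0}^n k = \binom{n+1}{2}$, respectively. Another application of the projection formula to pull $\kappa = \mathrm{pr}_C^* c_1(K_C)$ out of the intersection then gives
\begin{equation*}
\mathrm{pr}_{C,*}\!\left( c_{n+1}(\jj^n_{C\times S/S} f_S^*\sL) \cap [C \times S] \right) = \alpha_{n+1} + \binom{n+1}{2} K_C \cdot \alpha_n,
\end{equation*}
where $\alpha_m := \mathrm{pr}_{C,*}(\lambda^m \cap [C\times S])$.

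It then remains to identify $\alpha_m$ with $f^* \fh_m(X, \fami D)$ for $m = n$ and $m=n+1$. Both $f$ and $f_S = f \times \mathbf{1}_S$ are lci by Corollary \ref{push pull commute}, so Proposition \ref{prop:fund-classes} (i) gives $f_S^*[X\times S] = [C\times S]$, and iterated application of Proposition \ref{prop:chern-gysin} then yields $\lambda^m \cap [C\times S] = f_S^*(c_1(\sL)^m \cap [X\times S])$. Applying $\mathrm{pr}_{C,*}$, using Corollary \ref{push pull commute} for the commutative square with horizontal maps $f_S, f$ and vertical projections $\mathrm{pr}_C, \mathrm{proj}_1$ in order to swap pushforward and Gysin pullback, and finally invoking Proposition \ref{prop:h-classes}, will yield $\alpha_m = f^* \mathrm{proj}_{1,*}(c_1(\sL)^m \cap [X\times S]) = f^* \fh_m(X, \fami D)$, and combining with the previous display gives \eqref{eqn:RHS}.

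The main obstacle is purely bookkeeping: one must make sure that each pullback/pushforward manipulation is justified by the lci and excess-bundle hypotheses of the preliminary propositions. However, the relevant conditions have all been arranged in \S\ref{2.0}, so in the end the argument is a direct calculation of the kind above.
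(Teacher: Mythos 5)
Your proposal is correct and follows essentially the same route as the paper: identify $\omega_{C\times S/S}$ with $\mathrm{proj}_{C\times S/C}^*K_C$, expand \eqref{relatively obvious} discarding all terms with two or more factors of $K_C$ (the paper phrases this as $c_1(\omega_{C\times S/S})^2=0$, you discard them after pushforward, which is an immaterial difference), and then identify $\mathrm{proj}_{C\times S/C,*}(c_1(f_S^*\sL)^m\cap[C\times S])$ with $f^*\fh_m(X,\fami D)$ via exactly the same chain of Proposition \ref{prop:fund-classes}~(i), Proposition \ref{prop:chern-gysin}, Corollary \ref{push pull commute}, and Proposition \ref{prop:h-classes}. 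No gaps; nothing further is needed.
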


Note that in the statement, $f^*$ is the Gysin map induced by $f$, which
is defined because $f$ is automatically an lci morphism according to
Corollary \ref{push pull commute}. Moreover, note that although $\fh_n(X,{\fami D})$ lives in $\smash{ A_*(X) } $, only its top dimensional component contributes to $K_C \cdot f^*{\fh}_n(X,\fami D)$ (formally, $c_1({\sh T}_C^\vee) \cap f^*{\fh}_n(X,\fami D)$), and thus the last quantity is simply an integer multiple of $K_C$. 

\begin{proof}
We have $c_1(\omega_{C \times S/S}) = \mathrm{proj}^*_{C \times S/C} K_C$ and $c_1(\omega_{C \times S/S})^2 = 0$, so \eqref{relatively obvious} becomes
\begin{equation*}
c_{n+1} ( \jj^n_{C\times S/S}f^*{\sh L} )= c_1(f_S^*{\sh L})^{n+1}  + 
{n+1 \choose 2} \left( \mathrm{proj}^*_{C \times S/C} K_C\right) c_1(f_S^*{\sh L})^n 
\end{equation*}
This is an equality of operators $A_*(C \times S) \to A_{*-n-1}(C \times S)$. If we apply this to the fundamental class $[C \times S]$ and then push forward to $C$, we obtain
\begin{equation*}
\begin{aligned}
\mathrm{proj}_{C \times S/C,*} & \left( c_{n+1}( \jj^n_{C\times S/S} f_S^*{\sh L} ) \cap [C \times S] \right)= \mathrm{proj}_{C \times S/C,*} \left( c_1(f_S^*{\sh L})^{n+1} \cap [C \times S] \right) \\ &+ {n+1 \choose 2} K_C \cdot \mathrm{proj}_{C \times S/C,*} \left( c_1(f_S^*{\sh L})^n \cap [C \times S] \right)
\end{aligned} 
\end{equation*}
by the projection formula \cite[Proposition 2.5.(c)]{[Fu98]}. On the other hand, for any positive integer $m$,
\begin{equation*}
\begin{aligned}
\mathrm{proj}_{C \times S/C,*} \left( c_1(f_S^*{\sh L})^m \cap [C \times S] \right) &= \mathrm{proj}_{C \times S/C,*} f_S^* \left( c_1({\sh L})^m \cap [X \times S] \right) \\
&= f^* \mathrm{proj}_{X \times S/X,*} \left( c_1({\sh L})^m \cap [X \times S] \right) \\
&=f^* {\fh}_m(X, {\fami D}),
\end{aligned}
\end{equation*}
	so the previous formula is precisely \eqref{eqn:RHS}. To clarify the last formula above, in the first line we use Proposition \ref{prop:chern-gysin}; note also that $f_S^*[X \times S] = [C \times S]$ by Proposition \ref{prop:fund-classes} (i). The second line is by Corollary \ref{push pull commute} and the third line is by Proposition \ref{prop:h-classes}.

The corollary follows.
\end{proof}

\section{Multiplicities}

In this section, we define and study the integer multiplicities 
$m_p$ assigned to the inflection points, proving the desired functoriality
and positivity properties.

In Situation \ref{sit:actual-2}, let
${\fami I} = {\fami I}(X,{\fami D};C,f)$ be the zero locus of the section
$\smash{\tilde{s}}$ defined in Proposition \ref{prop:ram-family}. The jet bundle in Proposition \ref{prop:ram-family} and the section $\smash{\tilde{s}}$ induce a `localized top Chern class' $[{\fami I}]^\mathrm{vir} \in A_*({\fami I})$. This construction is extremely well-known, e.g. \cite[\S14.1]{[Fu98]}, with the ultimately inessential caveat that the pure dimensionality assumption in \cite[\S14.1]{[Fu98]} isn't among ours. 

Specifically, we proceed as follows. The jet bundle above corresponds to a `physical bundle' $\smash{ {\fami J} = \mathrm{Spec}_{C \times S} \mathrm{Sym} (\jj^n_{C \times S/S}f^*{\sh L})^\vee \to C \times S }$. There are two natural sections $C \times S \to {\fami J}$: the zero section ${\mathbf 0}_{\fami J}$ and the section $\tilde{\mathbf s}$ corresponding to $\tilde{s}$. Then $\smash{ {\fami I} = {(C \times S)} \times_{\fami J} {(C \times S)} }$ via ${\mathbf 0}_{\fami J}$ and $\tilde{\mathbf s}$, and we define $[{\fami I}]^\mathrm{vir} = {\mathbf 0}_{\fami J}^! [C \times S]$, with $[C \times S]$ living on the copy of $C \times S$ embedded into ${\fami J}$ via $\tilde{\mathbf s}$. It is well-known that 
\begin{equation}\label{eqn: vir is local chern} \iota_* [{\fami I}]^\mathrm{vir} = c_{n+1}(\jj^n_{C \times S/S}f_S^*{\sh L}) \cap [C \times S] \end{equation}
	in $C \times S$, where $\iota$ is the imbedding into the $0$-section copy of $C \times S$. Indeed \cite[Proposition 14.1 (a)]{[Fu98]} still holds without the pure dimensionality assumption. Of course, if $S$ is pure, then we recover the localized Chern class of \cite[\S14.1]{[Fu98]}.

A pleasant feature of the situation at hand is that, just under the assumption that the number of inflection points on $C$ is finite, the potentially virtual class $[{\fami I}]^{\mathrm{vir}}$ gives an actual (non-virtual) class on $C$. The key observation is that, since the projection $C \times S \to C$ sends ${\fami I}$ to a finite set of points $I$, the subscheme ${\fami I}$ must be a disjoint union ${\fami I} = \bigsqcup_{p \in I} {\fami I}_p$
such that ${\fami I}_p$ is contained \emph{set-theoretically} in $\{p\} \times S$. Hence there exist uniquely determined $[{\fami I}_p]^\mathrm{vir} \in A_0({\fami I}_p)$ such that 
\begin{equation}\label{eqn: split vir} [{\fami I}]^\mathrm{vir} = \sum_{p \in I} [{\fami I}_p]^\mathrm{vir} \in A_0({\fami I}), \end{equation}
where the new classes are implicitly pushed forward to ${\fami I}$. We can therefore make the following definition.

\begin{defn}\label{def:mp} In Situation \ref{sit:actual}, let $f:C \to X$ be a morphism from a curve and assume that the set of inflection points is finite. Then, with notation as above, for each $p \in I$ set $\smash{ m_p = \deg [{\fami I}_p]^\mathrm{vir} \in \zz }$, the \emph{multiplicity} of $p$, and define the \emph{inflection divisor} ${\mathbf I} = {\mathbf I}(X,{\fami D};C,f)$ as
$$ {\mathbf I} = \sum_{p \in I} m_p p \in \mathrm{Div}(C). $$ 
\end{defn}

In light of \eqref{eqn: vir is local chern}, \eqref{eqn: split vir} and Definiton \ref{def:mp}, Corollary \ref{cor:RHS} gives the most general form of the purely intersection-theoretic part of our main theorem. The primary purpose of this section is to prove the remainder of our main theorem, which we state in its general form as follows.

\begin{thm}\label{thm:main theorem} In Situation 
\ref{sit:actual}, for any smooth proper curve $C$, and any morphism $f:C \to X$ such
that the set of inflection points $I=I(X,{\fami D};C,f)$ is 
finite, we have
\begin{equation}\label{main equation 2}
{\mathbf I}(X,{\fami D};C,f) \sim f^*{\fh}_{n+1}(X,\fami D) 
+ {n+1 \choose 2} K_C \cdot {\fh}_n(X,\fami D).
\end{equation}  

Moreover, $m_p \geq 0$ for all $p \in I$, and we have $m_p > 0$ if and only if there exists an irreducible component $S_0$ of $S$ whose image under the map $S \to \mathrm{Hilb}(X)$ induced by ${\fami D}$ has dimension $n$, and a closed point $z \in S_0$ such that $f^*{\fami D}_z$ has multiplicity at least $n+1$ at $p$.
\end{thm}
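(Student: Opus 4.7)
The plan is to split Theorem \ref{thm:main theorem} into two parts: the linear equivalence \eqref{main equation 2}, which is essentially formal, and the integer-valued assertions about $m_p$, which form the substantive content. The first part is immediate from what has already been set up: by Definition \ref{def:mp}, $\mathbf{I}(X,\fami{D};C,f)$ is the pushforward to $C$ of $[\fami{I}]^{\mathrm{vir}} = \sum_p [\fami{I}_p]^{\mathrm{vir}}$; by \eqref{eqn: vir is local chern}, this pushed first into $C\times S$ equals $c_{n+1}(\jj^n_{C \times S/S} f_S^*\sh{L}) \cap [C \times S]$; and Corollary \ref{cor:RHS} identifies its further pushforward to $C$ with the right-hand side of \eqref{main equation 2}.

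For $m_p \geq 0$, my plan is to invoke Fulton's construction of the localized top Chern class via a filtered bundle \cite[Lemma 3.2]{[Fu98]}, applied to the natural filtration $0 = F^0 \subset F^1 \subset \cdots \subset F^{n+1} = \jj^n := \jj^n_{C\times S/S} f_S^*\sh{L}$ with $F^i = \ker(\jj^n \twoheadrightarrow \jj^{n-i})$ and successive quotients $L_i := F^i/F^{i-1} \cong f_S^*\sh{L} \otimes \omega_{C \times S/S}^{\otimes(n-i+1)}$. The construction realizes $[\fami{I}]^{\mathrm{vir}}$ as an iterated refined Cartier intersection: the section $\tilde{s}$ of $\jj^n$ induces at each step a section $\sigma_i$ of $L_i$ on the preceding zero scheme, and cutting successively by the vanishing loci of the $\sigma_i$ yields an explicit effective $0$-cycle representative of $[\fami{I}]^{\mathrm{vir}}$. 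The key point at each $p \in I$ is that components of $\fami{I}_p$ are contained in $\{p\} \times S$; $\omega_{C\times S/S}$ is trivial when restricted to any such fiber, so the various $L_i$ on $\fami{I}_p$ all agree with $f_S^*\sh{L}$, and the iterated intersection stays genuinely geometric (no ``virtual'' cancellations are required). This should give an effective representative of $[\fami{I}_p]^{\mathrm{vir}}$ and hence $m_p \geq 0$.

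For the strict positivity criterion, the plan is to reduce by Corollary \ref{cor:h-decompose} and Lemma \ref{lem:hi-functorial} to analyzing one irreducible component $S_0 \subseteq S$ at a time. If $\dim S_0 = n$ and $S_0 \to \mathrm{Hilb}(X)$ is generically finite, and $z \in S_0$ witnesses an inflection of order $\geq n+1$ at $p$, then near $(p,z)$ the iterated Cartier intersection on $\{p\} \times S_0$ reaches a $0$-dimensional scheme of positive length, because generic finiteness prevents $\tilde{s}$ from factoring through any proper subbundle of $\jj^n$ along a positive-dimensional family through $(p,z)$. Conversely, if $S_0$ either has dimension less than $n$ or maps to $\mathrm{Hilb}(X)$ with image of dimension less than $n$, then the family $\fami{D}$ restricted to $S_0$ is pulled back from a lower-dimensional base; by Proposition \ref{prop:jet-basic}, $\tilde{s}$ factors through the corresponding lower-rank jet bundle, and dimension considerations force the contribution of $S_0$ to $[\fami{I}_p]^{\mathrm{vir}}$ to vanish. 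The main obstacle will be making the effectiveness portion of the nonnegativity argument precise: the iterated Cartier construction can in principle produce $c_1(L_i) \cap$ contributions on components where $\sigma_i$ vanishes identically, and one must carefully use the fibral triviality of $\omega_{C \times S/S}$ together with $\fami{I}_p \subseteq \{p\} \times S$ to show that these degenerate contributions are either themselves effective or vanish outright by dimension count.
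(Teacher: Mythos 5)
The formal half of your plan (the linear equivalence) is exactly the paper's: it follows from Definition \ref{def:mp}, \eqref{eqn: vir is local chern}, \eqref{eqn: split vir}, and Corollary \ref{cor:RHS}. The gap is in the nonnegativity argument. Your claim that, because $\omega_{C\times S/S}$ is trivial on $\{p\}\times S$, the iterated Cartier construction ``stays genuinely geometric'' and yields an effective representative of $[{\fami I}_p]^{\mathrm{vir}}$ is precisely what fails: whenever a component of the intermediate zero scheme is contained in the zero locus of the induced section of $L_i$ --- which happens whenever ${\fami I}_p$ is positive-dimensional, already in the classical Pl\"ucker situation and typically in the Riemann--Hurwitz and abelian-variety cases --- Fulton's Lemma 3.2 construction forces a choice of an arbitrary, not necessarily effective, divisor representing $L_i$ on that component, and the resulting contribution is a top product of $c_1(\sL|_{\{p\}\times S})$ over subvarieties of $\{p\}\times S$, with no a priori sign and no vanishing by dimension count. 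The missing idea, which is the paper's actual mechanism, is the combination of Lemma \ref{lem:chern-restrict} (the localized class on ${\fami I}_p$ is a \emph{positive} combination of such products over irreducible subsets) with Theorem \ref{thm:positive}: the degree depends on the quotient line bundles only up to \emph{algebraic} equivalence, and $\sL|_{\{p\}\times S}$ is algebraically base-point free (Proposition \ref{prop:positive}, proved by moving $p$ to some $p'\notin{\fami D}_z$ using connectedness of $X$), so the degree can be computed as a proper intersection of effective divisors, hence is nonnegative. Rational equivalence would not suffice here: in the Riemann--Hurwitz and theta-divisor cases $\sL|_{\{p\}\times S}$ often has a unique effective representative, which may well contain the excess components, so ``effective or vanishing by dimension count'' cannot be salvaged.

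For the positivity criterion, your assertion that generic finiteness of $S_0\to\mathrm{Hilb}(X)$ ``prevents $\tilde{s}$ from factoring through a proper subbundle'' is not an argument, and in the converse direction the statement that a family pulled back from a lower-dimensional base makes $\tilde{s}$ factor through a lower-rank jet bundle is incorrect (pullback of the family does not change the rank of the relative jet bundle, which is $n+1$ regardless). The paper instead reduces, via Proposition \ref{prop:inflection divisors decompose} and the functoriality statement Proposition \ref{prop:inflection divisors are functorial} (ii), to the reduced image of each component in $\mathrm{Hilb}(X)$; there every divisor occurs over only finitely many base points, so $\sL|_{\{p\}\times S}$ has positive degree on every curve in $S$ (second half of Proposition \ref{prop:positive}), and Theorem \ref{thm:positive} via Corollary \ref{cor:positive-2} gives $m_p>0$, while components whose image has dimension less than $n$ contribute $0$ by the degree-zero case of the same functoriality. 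Your reduction-to-components skeleton matches the paper's, but without the algebraic-equivalence/algebraically-base-point-free input the nonnegativity and positivity claims remain unproved.
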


Of course, the left hand side of \eqref{main equation 2} is a `physical' divisor on $C$, while the right hand side is a divisor class. Note that $f^*$ is Gysin pullback, which exists since $f$ is automatically lci (Corollary \ref{push pull commute}). Note also that this recovers Theorem \ref{main theorem} as a special case, using Corollary \ref{cor:divisor-pullback} and Proposition \ref{prop:NH-h}. If $S$ has components of dimension less than $n$, then 
${\fh}_{n}(X,\fami D)$ and ${\fh}_{n+1}(X,\fami D)$ may contain 
smaller-dimensional cycles, but these will not contribute to the righthand side of \eqref{main equation 2}.

We record for completeness and future use the remark that inflection divisors behave in the obvious way relative to decomposing into irreducible components:

\begin{prop}\label{prop:inflection divisors decompose}
Let $S_\mathrm{red} = \bigcup_{\alpha \in N} S_\alpha$ be the decomposition of $S$ into irreducible components. If ${\fami D}_\alpha = S_\alpha \times_S {\fami D}$ and $\mu_\alpha$ is the length of $S$ at the generic point of $S_\alpha$, then $I(X,{\fami D}) = \bigcup_{\alpha \in N} I(X,{\fami D}_\alpha)$ and
$$ \mathbf{I}(X,{\fami D}) = \sum_{\alpha \in N} \mu_\alpha \mathbf{I}(X,{\fami D}_\alpha). $$
Moreover, if $\dim S_\alpha < n$, then $\mathbf{I}(X,{\fami D}_\alpha) = 0$. 
\end{prop}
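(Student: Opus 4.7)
The plan is to lift the stated equality of inflection divisors to an equality of localized virtual classes in $A_*(\fami I)$, and then push forward to $C$ and decompose by inflection point.

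First, the set-theoretic equality $I(X,\fami D)=\bigcup_\alpha I(X,\fami D_\alpha)$ is immediate from the definition: a $\kk$-point $p \in C$ is inflectionary if and only if there exists $z \in S(\kk)$ with $f^*\fami D_z$ having multiplicity at least $n+1$ at $p$, and every such $z$ lies in some component $S_\alpha$ with $\fami D_z$ unchanged under the inclusion $S_\alpha \hookrightarrow S$.

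For the divisor formula, I would start from the cycle-level decomposition $[C \times S]=\sum_\alpha \mu_\alpha[C \times S_\alpha]$ (which holds by definition of the fundamental class, since $\mu_\alpha$ is also the length of $C \times S$ at the generic point of $C \times S_\alpha$). Applying the Gysin pullback $\mathbf 0_{\fami J}^!$ to both sides, linearity gives $[\fami I]^{\mathrm{vir}}=\sum_\alpha \mu_\alpha\,\mathbf 0_{\fami J}^![C \times S_\alpha]$ in $A_*(\fami I)$. The key identification is $\mathbf 0_{\fami J}^![C \times S_\alpha]= [\fami I_\alpha]^{\mathrm{vir}}$ (implicitly pushed forward to $\fami I$). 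To see this, note that by Proposition \ref{prop:jet-basic} (i) the physical jet bundle $\fami J_\alpha$ attached to $\fami D_\alpha$ is canonically $\fami J \times_{C \times S}(C \times S_\alpha)$, so the zero sections fit into a cartesian square whose vertical arrows are the closed immersions $C \times S_\alpha\hookrightarrow C \times S$ and $\fami J_\alpha\hookrightarrow \fami J$; moreover, $\tilde{\mathbf s}$ restricted to $C \times S_\alpha$ factors as $\tilde{\mathbf s}_\alpha$ followed by the closed immersion $\fami J_\alpha\hookrightarrow \fami J$. The desired identification then follows from the standard compatibility of Gysin pullback by a regular immersion with proper pushforward (\cite[Theorem 6.2 (a)]{[Fu98]}). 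Pushing the resulting equality of virtual classes further to $C$ and regrouping contributions according to $p \in I$ yields $\mathbf I(X,\fami D)=\sum_\alpha \mu_\alpha \mathbf I(X,\fami D_\alpha)$.

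For the vanishing when $\dim S_\alpha<n$: the class $[\fami I_\alpha]^{\mathrm{vir}}$ is obtained by applying the operator $\mathbf 0_{\fami J_\alpha}^!$, which lowers dimension by $n+1$, to $[C \times S_\alpha]\in A_{\dim S_\alpha+1}(C \times S_\alpha)$, so it lives in $A_{\dim S_\alpha-n}(\fami I_\alpha)$, which is zero when $\dim S_\alpha<n$; hence $\mathbf I(X,\fami D_\alpha)=0$.

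The main obstacle is verifying the Gysin-pushforward compatibility in a setting where $C \times S$ and $\fami J$ are not pure-dimensional. However, \cite[Theorem 6.2 (a)]{[Fu98]} is formulated purely at the level of Chow groups, and as the authors remark just before Definition \ref{def:mp}, the localized Chern class construction of \cite[\S14.1]{[Fu98]} goes through without the pure-dimensionality assumption, so this extension should be routine.
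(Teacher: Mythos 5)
Your proof is correct and follows essentially the same route as the paper: decompose $[C\times S]=\sum_\alpha \mu_\alpha[C\times S_\alpha]$, use Fulton's Theorem 6.2 to split the localized top Chern class into (pushforwards of) the localized classes of the restrictions to the components $C\times S_\alpha$, push forward to $C$, and kill the low-dimensional components by the dimension count $A_{\dim S_\alpha-n}(\fami I_\alpha)=0$. The only small precision is that identifying $\mathbf 0_{\fami J}^![C\times S_\alpha]$ with (the pushforward of) $[\fami I_\alpha]^{\mathrm{vir}}=\mathbf 0_{\fami J_\alpha}^![C\times S_\alpha]$ uses, besides the proper-pushforward compatibility of Theorem 6.2(a), also Theorem 6.2(c) for the induced regular imbedding of the zero section of the restricted bundle $\fami J_\alpha=\fami J\times_{C\times S}(C\times S_\alpha)$ -- exactly the pair of references the paper cites.
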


\begin{proof}
The first part and last part are trivial. The part concerning the inflection divisors follows from the general fact that a localized top Chern class is the sum of the (pushforwards of the) localized Chern classes of the restrictions to individual components, which can be checked using \cite[Theorem 6.2. parts (a) and (c)]{[Fu98]}.
Then we get the analogous statement with ${\mathbf I}$ replaced by $[{\fami I}]^\mathrm{vir}$ and the suitable pushforwards where necessary and the claim follows by pushing forward to $C$. 
\end{proof}

We next show that inflection divisors are functorial, completing the proof of (a generalized version of) Proposition \ref{prop:functorial}.

\begin{prop}\label{prop:inflection divisors are functorial}
In Situation \ref{sit:actual-2}, we have:
\begin{ilist}
\itm Given also $X'$ smooth and projective, 
${\fami D}' \subseteq X' \times S$ a flat family of divisors on $X'$,
and $g:X \to X'$ such that $\fami D = g^{-1} \fami D'$, then 
$ I(X',\fami D';C,g\circ f)=I(X,{\fami D};C,f)$, $$\quad {\mathbf I}(X',\fami D';C,g\circ f)={\mathbf I}(X,{\fami D};C,f),$$
and 
${\mathfrak h}_m(X,\fami D)=g^*{\mathfrak h}_m(X',\fami D')$ for all $m$.
\itm If instead we are also given $S'$ with $\dim S' \leq n$
and $h:S' \to S$, and write $\fami D'=h^{-1} \fami D \subseteq X \times S'$, then $ I(X,\fami D';C,f) \subseteq I(X,{\fami D};C,f)$, and if $h$ is surjective, then equality occurs. Moreover, if $$ S_\mathrm{red} = \bigcup_{\alpha \in N} S_\alpha \quad \text{ and } \quad {S'}_\mathrm{red} = \bigcup_{\beta \in N'} S'_\beta $$ 
are the decompositions of $S$ and $S'$ into irreducible components, $\smash{ \mu'_\beta }$ is the length of $S'$ at the generic point of $S'_\beta$, and we define the degree of $h$ over an irreducible component $S_\alpha$ to be
$$ d_\alpha = \sum_{\beta} \mu'_\beta \deg (h:S'_\beta \to S_\alpha), $$
where the sum is taken over all $\beta$ such that $h(S'_\beta) = S_\alpha$, then
\begin{equation}\label{eqn:functoriality in S-2}
{\mathbf I}(X,\fami D';C,f)=\sum_{\alpha \in N} d_\alpha {\mathbf I}(X,{\fami D}_\alpha;C,f),
\end{equation}
where ${\fami D}_\alpha = S_\alpha \times_S {\fami D}$. 
If further we have that each component $S'_{\beta}$ of $S'$ either dominates 
a component of $S$ or satisfies $\dim h(S'_{\beta})<\dim S'_{\beta}$,
then
\begin{equation*}
{\mathfrak h}_m(X,\fami D')= \sum_{\alpha \in N} d_\alpha \cdot {\mathfrak h}_m(X,\fami D_\alpha).
\end{equation*}
\end{ilist}
\end{prop}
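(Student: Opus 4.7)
The $\mathfrak h$-class assertions in both parts have already been proven as Lemma \ref{lem:hi-functorial}, so the plan is to establish the remaining claims about $I$ and $\mathbf{I}$. In each case the strategy is to trace the construction of Proposition \ref{prop:ram-family} through the given morphism of families, using the base-change formula for relative jet bundles (Proposition \ref{prop:jet-basic} (i)) to reduce everything to a computation on a single inflection scheme.

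Part (i) is essentially formal. If $(\sL',s')$ realizes $\fami D'$ on $X'\times S$, then $\fami D = g^{-1}\fami D'$ is realized by $\bigl((g\times\mathbf{1}_S)^*\sL',(g\times\mathbf{1}_S)^*s'\bigr)$, and since $(g\circ f)_S = (g\times\mathbf{1}_S)\circ f_S$, the line bundles $(g\circ f)_S^*\sL'$ and $f_S^*\sL$ on $C\times S$ coincide with matching sections. The relative jet bundles and their distinguished sections on $C\times S$ are therefore literally the same, so $\fami I$, $[\fami I]^{\mathrm{vir}}$, $I$, and $\mathbf{I}$ all agree for the two setups.

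For part (ii), set $h_X = \mathbf 1_X\times h$ and $h_C = \mathbf 1_C\times h$, and abbreviate $\jj^n := \jj^n_{C\times S/S}(f_S^*\sL)$. Then $\fami D' = h_X^{-1}\fami D$ is realized by $(h_X^*\sL,h_X^*s)$, and the commutation $h_X\circ f_{S'} = f_S\circ h_C$ gives $f_{S'}^*h_X^*\sL = h_C^*f_S^*\sL$. Proposition \ref{prop:jet-basic} (i) then identifies $\jj^n_{C\times S'/S'}(f_{S'}^*h_X^*\sL)$ with $h_C^*\jj^n$ and its canonical section with the pullback of $\tilde s$, so $\fami I' = \fami I\times_S S'$ scheme-theoretically. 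The containment $I'\subseteq I$ is immediate, with equality when $h$ is surjective. For the cycle identity, apply \eqref{eqn: vir is local chern} and the projection formula along the proper morphism $h_C$:
\begin{equation*}
h_{C,*}\,\iota'_*[\fami I']^{\mathrm{vir}} = h_{C,*}\!\left(c_{n+1}(h_C^*\jj^n)\cap [C\times S']\right) = c_{n+1}(\jj^n)\cap h_{C,*}[C\times S'].
\end{equation*}
Decompose $[C\times S'] = \sum_\beta \mu'_\beta[C\times S'_\beta]$ and push forward componentwise: the only contributions that survive capping with $c_{n+1}(\jj^n)$ and pushing to $C$ come from $\beta$ with $\dim S'_\beta = \dim h(S'_\beta) = n$, in which case $\dim S\leq n$ forces $h(S'_\beta)$ to be an irreducible component $S_\alpha$, and the contribution is $\mu'_\beta\deg(h|_{S'_\beta}\colon S'_\beta\to S_\alpha)[C\times S_\alpha]$. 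These sum to $\sum_\alpha d_\alpha[C\times S_\alpha]$, and invoking \eqref{eqn: vir is local chern} for each subfamily $\fami D_\alpha$ (together with the projection formula for the closed immersion $C\times S_\alpha\hookrightarrow C\times S$) identifies the $\alpha$-summand with $d_\alpha\mathbf{I}(X,\fami D_\alpha;C,f)$, proving the stated formula. The final $\mathfrak h_m$-assertion under the additional hypothesis is exactly Lemma \ref{lem:hi-functorial} (ii).

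The main obstacle is the dimension-counting step in (ii): one must verify that every spurious contribution to $h_{C,*}[C\times S']$ — coming either from components $S'_\beta$ with $\dim S'_\beta < n$ or from components where $h$ collapses $S'_\beta$ onto a lower-dimensional subvariety of $S$ — gets annihilated after capping with $c_{n+1}(\jj^n)$, because the resulting class lives in $A_j(C\times S)$ with $j<0$. This is precisely what allows the formula to remain valid when $S'$ is nonreduced, non-equidimensional, or has components on which $h$ is degenerate.
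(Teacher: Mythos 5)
Part (i) and the $\mathfrak h$-class and $I'\subseteq I$ claims in (ii) are fine and match the paper. The problem is the central step of (ii). Your computation
\begin{equation*}
h_{C,*}\,\iota'_*[{\fami I}']^{\mathrm{vir}} \;=\; c_{n+1}(\jj^n)\cap h_{C,*}[C\times S']
\end{equation*}
followed by decomposing $h_{C,*}[C\times S']$ and re-applying \eqref{eqn: vir is local chern} for each ${\fami D}_\alpha$ is an identity of classes in $A_0(C\times S)$, and after pushing to $C$ it only gives an identity in $A_0(C)\cong\mathrm{Pic}(C)$, i.e.\ it proves that ${\mathbf I}(X,{\fami D}';C,f)$ and $\sum_\alpha d_\alpha{\mathbf I}(X,{\fami D}_\alpha;C,f)$ are \emph{linearly equivalent}. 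But \eqref{eqn:functoriality in S-2} is an equality of actual divisors: the multiplicities $m_p$ are defined from the decomposition $[{\fami I}]^{\mathrm{vir}}=\sum_p[{\fami I}_p]^{\mathrm{vir}}$ of the \emph{localized} class on the zero locus, and this per-point information is destroyed the moment you push into the ambient space, since distinct fibers $\{p\}\times S$ and $\{q\}\times S$ support rationally equivalent zero-cycles in $C\times S$. Nor can you simply restrict to a small open neighborhood $U$ of a single $p$: degrees are not defined on $A_0(U\times S)$, and the map $A_0(\{p\}\times S)\to A_0(C\times S)$ need not be injective, so the per-point degrees cannot be recovered from your ambient identity.

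What is needed is the refined statement that localized top Chern classes are compatible with proper pushforward \emph{as classes on the zero loci}, which is \cite[Proposition 14.1 (d)(iii)]{[Fu98]} applied to $\mathbf 1_C\times h$; its proof uses compatibility of refined Gysin maps with proper pushforward, not just the projection formula, and it is exactly what the paper invokes (in the case $S,S'$ integral with $\dim S'=n$, and with a footnoted convention handling components whose image drops dimension, where the class is shown to vanish). The general case is then assembled from Proposition \ref{prop:inflection divisors decompose} together with a dimension count much like the one you perform, but carried out at the level of the localized classes ${\mathbf I}(X,{\fami D}'_\beta)$ rather than of cycles in $A_*(C\times S)$. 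So your reduction strategy is sound in outline, but the key pushforward step must be replaced by the refined (localized) compatibility; as written, the argument proves the formula only up to rational equivalence.
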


Note that we may ignore components of $S$ of dimension less than $n$ in \eqref{eqn:functoriality in S-2}. Note too that (in light of Proposition \ref{prop:NH-h} and Corollary \ref{cor:divisor-pullback})
Proposition \ref{prop:inflection divisors are functorial} subsumes Proposition \ref{prop:functorial}, as all conditions are satisfied when $S$ and $S'$ are both varieties of the same dimension. 

\begin{proof} First, the statements on $\mathfrak h$-classes are simply
restatements of Lemma \ref{lem:hi-functorial}. For inflection points and
divisors,
statement (i) is trivial, as $I$ and ${\mathbf I}$ are constructed from precisely the same 
initial line bundle and section on $C \times S$ in both cases.

For (ii), the statement that $I(X,\fami D';C,f) \subseteq I(X,{\fami D};C,f)$,
with equality when $h$ is surjective, is also trivial.
To prove \eqref{eqn:functoriality in S-2}, we observe
that $\tilde{s}'$
is obtained by pulling back $\tilde{s}$ under $\mathbf{1}_C \times h$, and
the desired statement follows from \cite[Proposition 14.1 (d)(iii)]{[Fu98]}\footnote{\cite[Proposition 14.1 (d)(iii)]{[Fu98]} clearly has an implicit assumption: for instance, if $f$ is a closed immersion, then it doesn't hold even with $\deg(X'/X) = 0$. Regardless, $f_*[X'] = \deg(X'/X)[X]$ is what's required for the proof to go through, and this condition holds whenever $\dim X \leq \dim X'$, if we define $\deg(X'/X)$ to be $0$ when $\dim X = \dim X'$ and $f$ isn't dominant, or $\dim X < \dim X'$.} in the special case when $S$ and $S'$ are both integral, and $\dim S'=n$.
We can then deduce the general case using Proposition \ref{prop:inflection divisors decompose} as follows:
\begin{equation*}
\begin{aligned}
\mathbf{I}(X,{\fami D}') &= \sum_{\beta} \mu'_\beta \mathbf{I}(X,{\fami D}'_\beta) = \sum_{\dim h(S'_\beta) = n} \mu'_\beta \mathbf{I}(X,{\fami D}'_\beta)  \\ &=\sum_{\dim h(S'_\beta) = n} \mu'_\beta \deg_{\beta,\alpha(\beta)} \mathbf{I}(X,{\fami D}_{\alpha(\beta)}) \\
	&= \sum_{\alpha:\dim S_{\alpha}=n} \sum_{h(S'_\beta) = S_\alpha} \mu'_\beta \deg h_{\beta,\alpha}\mathbf{I}(X,{\fami D}_\alpha) = \sum_{\alpha \in N} d_\alpha {\mathbf I}(X,{\fami D}_\alpha),
\end{aligned}
\end{equation*}
where $\alpha(\beta) \in N$ is defined by $h(S'_\beta) = S_{\alpha(\beta)}$ and $h_{\beta,\alpha}$ is the restriction $S'_\beta \to S_\alpha$ of $h$. 
Note that in the first line we are able to restrict to $\beta$ with
$\dim h(S'_{\beta})=n$ because if $\dim S'_{\beta}<n$ then 
$\mathbf{I}(X,{\fami D'}_{\beta})=0$ automatically, while if
$\dim S'_{\beta}=n$ but $\dim h(S'_{\beta})<n$, then we again obtain
$\mathbf{I}(X,{\fami D'}_{\beta})=0$, this time using the integral case
for any $\alpha$ with $h(S'_{\beta}) \subseteq S_{\alpha}$. 
Similarly, the final equality in the last
line arises because $\mathbf{I}(X,{\fami D}_{\alpha})=0$ when
$\dim S_{\alpha}<n$, so it doesn't matter whether or not we 
restrict to $\alpha$ with $\dim S_{\alpha}=n$.
\end{proof}

It thus remains to show the desired nonnegativity/positivity statements on
the $m_p$. Here we will use the standard filtration on the jet bundle in order to gain better control over the interaction between the global intersection theoretic definitions above and the decomposition of ${\fami I}$ into (possibly excess-dimensional) components. We develop criteria for effectivity and positivity based on the line bundles in such a filtration. This is facilitated by the alternate construction of localized top Chern classes in the presence of a complete flag filtration from \cite[Lemma 3.2]{[Fu98]}. Although we assume it is `well known' that this 
coincides with the construction above, we were not able to find an
explicit assertion to this effect in \cite{[Fu98]}, so for the sake
of completeness, we will sketch a proof at the end of this discussion (see Proposition \ref{prop:constructions-agree}). We first recall the construction in \cite[Lemma 3.2]{[Fu98]}. The general situation will be the following:

\begin{sit}\label{sit:localized-chern} Let $X$ be a scheme, $\sE$ a 
vector bundle of rank $r$ on $X$, $s \in \Gamma(X,\sE)$ a section,
and $Y \subseteq X$ the zero set of $X$. Assume further that
$\sE$ has a filtration
$$0=\sE_0 \subseteq \sE_1 \subseteq \dots \subseteq \sE_r =\sE,$$
so that $\sL_i:=\sE_i/\sE_{i-1}$ is a line bundle for $1 \leq i \leq r$.
\end{sit}

Then for any $\alpha \in A_k(X)$, Fulton constructs a class 
$\alpha' \in A_{k-r}(Y)$ such that the pushforward of $\alpha'$ to $X$
is equal to $c_r(\sE) \cap \alpha$ in $A_{k-r}(X)$ (see Lemma 3.2
and $(\ast)$ in the proof of Theorem 3.2 of \cite{[Fu98]}).
The construction proceeds inductively as follows: first, let
$\sum a_i Z_i$ be a representative of $\alpha$, and observe that $s$
induces a section $s_r$ of $\sL_r$. For each $i$, if 
$Z_i \not\subseteq Z(s_r)$, then $s_r|_{Z_i} \in \Gamma(Z_i,\sL_r|_{Z_i})$
induces a Cartier divisor, whose associated Weil divisor is a
$(k-1)$-cycle supported by construction on $Z(s_r)$. On the other hand,
if $Z_i \subseteq Z(s_r)$, then choosing any (not necessarily effective)
Cartier divisor representing $\sL_r|_{Z_i}$, and taking its associated
Weil divisor, we again obtain a $(k-1)$-cycle supported on $Z(s_r)$.
Summing over all $i$, we obtain an element of $A_{k-1}(Z(s_r))$. Now,
on $Z(s_r)$, the section $s$ induces a section 
$s_r' \in \Gamma(Z(s_r),\sE_{r-1}|_{Z(s_r)})$, and 
$Y=Z(s)=Z(s_r')\subseteq Z(s_r)$, so we replace $X$ by $Z(s_r)$, $\sE$
by $\sE_{r-1}$, and $s$ by $s_r'$, and iterate the procedure until
we obtain the desired $\alpha' \in A_{k-r}(Y)$.

In our case, we will ultimately assume that the ambient scheme is a variety of
dimension equal to the rank of $\sE$, and we will set $\alpha$ to be
the fundamental class, so that we will canonically obtain a class in
$A_0(Y)$. We will be interested in studying the degree of this class on
each connected component $Y'$ of $Y$, and an important observation is that
for this purpose, we can work up to algebraic equivalence rather than
rational equivalence. Another important observation is that we can
obtain effectivity and positivity results from the properties of
$\sL_i|_{Y'}$. To avoid unnecessary complications, we will restrict to the
case that our ambient space is a variety, eventually reducing our desired 
results to this situation. Our basic lemma is the following.

\begin{lem}\label{lem:chern-restrict} In Situation \ref{sit:localized-chern}, 
with $X$ a variety of dimension $d$, and $\alpha = [X]$, the associated
localized top Chern class $\beta \in A_{d-r}(Y)$ can be represented as 
follows: there exist
irreducible closed subsets $Y_1,\dots,Y_m$ of $Y$, positive integers
$a_1,\dots,a_m$, and subsets 
$S_1,\dots,S_m \subseteq \{1,\dots,r\}$, such that $r=|S_j|+\dim X-\dim Y_j$
for each $j$, and
$$\beta
=\sum_{j=1}^m   \iota_{j,*} \left( a_i \prod_{i \in S_j} c_1(\sL_i|_{Y_j}) \cap [Y_j] \right),$$
where $\iota_j:Y_j \hookrightarrow Y$ is the inclusion.
\end{lem}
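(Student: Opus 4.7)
The plan is to prove this by induction on $r$, closely following the recursive structure of Fulton's construction. The base case $r=0$ is immediate: $\sE=0$ forces $Y=X$, and the construction gives $\beta=[X]$, which has the claimed form with $m=1$, $a_1=1$, $Y_1=X$, $S_1=\emptyset$.

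For the inductive step, I would split on the behavior of the induced section $s_r\in\Gamma(X,\sL_r)$ coming from the quotient $\sE\twoheadrightarrow\sL_r$. If $s_r\not\equiv 0$, then $Z(s_r)$ is a proper effective Cartier divisor on the variety $X$; write its associated Weil divisor as $\sum_\ell n_\ell[X_\ell']$ with $n_\ell>0$ and $X_\ell'$ irreducible of dimension $d-1$. On each $X_\ell'$ the restriction of $s$ lands in $\sE_{r-1}|_{X_\ell'}$, which carries the truncated flag of rank $r-1$. Applying the inductive hypothesis to $(X_\ell',\sE_{r-1}|_{X_\ell'},s|_{X_\ell'})$ yields a representation of the associated localized top Chern class $\beta^{(\ell)}\in A_{d-r}(Y\cap X_\ell')$ of the claimed form, with index sets contained in $\{1,\ldots,r-1\}$ and the dimension relation $|S_j^{(\ell)}|+(d-1)-\dim Y_j^{(\ell)}=r-1$. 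This is precisely how Fulton's construction proceeds in this case, so $\beta=\sum_\ell n_\ell\iota_{\ell,*}\beta^{(\ell)}$. Positivity persists because every $n_\ell$ is positive, and the dimension relation rearranges to $|S_j|+d-\dim Y_j=r$.

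If instead $s_r\equiv 0$, then $s$ is already a section of $\sE_{r-1}$ with the same zero locus $Y$. I would apply the inductive hypothesis to $(X,\sE_{r-1},s)$ with the truncated flag to obtain $\beta'\in A_{d-r+1}(Y)$ of the claimed form, with $S_j'\subseteq\{1,\ldots,r-1\}$ and $|S_j'|+d-\dim Y_j=r-1$. The identity I expect to need is
$$\beta = c_1(\sL_r|_Y)\cap\beta'\quad\text{in }A_{d-r}(Y).$$
Granted this, applying the projection formula componentwise pushes $c_1(\sL_r|_Y)$ inside each $\iota_{j,*}$ and amounts to replacing $S_j'$ by $S_j=S_j'\cup\{r\}$ while leaving $Y_j$ and $a_j$ unchanged; the size of the index set increases by one, restoring the dimension equation $|S_j|+d-\dim Y_j=r$, and positivity is preserved.

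The main obstacle will be justifying the identity in the second case. The trouble is that Fulton's construction at the $\sL_r$-step requires choosing an arbitrary (possibly non-effective) Cartier divisor representing $\sL_r|_X$, so the literal Weil-divisor representation can have negative coefficients even though the resulting class in $A_*(Y)$ is independent of the choice. I would verify the identity either by tracking through Fulton's construction and commuting the $c_1(\sL_r)$-intersection past the remaining $r-1$ steps, which only see the subbundle $\sE_{r-1}$ and are insensitive to $\sL_r$, or more intrinsically from the scheme-theoretic description $\beta=\mathbf{0}_{\sE}^![X]$ together with compatibility with the short exact sequence $0\to\sE_{r-1}\to\sE\to\sL_r\to 0$. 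The key conceptual point is that positivity, which can be invisible at the cycle level when $s_r\equiv 0$, is recovered at the level of $A_*(Y)$ through the projection formula once the identity with $\beta'$ is established.
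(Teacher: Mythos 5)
Your induction runs in the opposite direction from the paper's, and that is where the one real issue sits. The paper peels off the \emph{bottom} piece of the flag: it applies the inductive hypothesis to the quotient bundle $\sE/\sE_1$ on the same variety $X$ (i.e.\ to the first $r-1$ steps of the construction), so that all that remains is a single intersection with the pseudodivisor $(\sL_1|_{Y'},Y',s_1)$ against a sum of terms $a'_j\prod_{i\in S'_j}c_1(\sL_i|_{Y'_j})\cap[Y'_j]$; one application of \cite[Corollary 2.4.2]{[Fu98]} commutes that pseudodivisor past the accumulated honest first Chern classes, and the case split (component contained in $Z(s_1)$ or not) produces either a new index $1\in S_j$ or positive Cartier-divisor multiplicities. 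You instead peel off the \emph{top} quotient $\sL_r$, i.e.\ the first step of the construction. Your nondegenerate case ($s_r\not\equiv 0$) is fine: the construction does proceed component-by-component on the effective Weil divisor of $Z(s_r)$, and the identity $\beta=\sum_\ell n_\ell\iota_{\ell,*}\beta^{(\ell)}$ only needs compatibility of the subsequent pseudodivisor intersections with proper pushforward (projection formula), which you should say explicitly but which is routine.

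The gap is the identity $\beta=c_1(\sL_r|_Y)\cap\beta'$ in the case $s_r\equiv 0$, which you correctly flag but do not prove, and which is not a single application of commutativity: you must move a $c_1(\sL_r)$ operation past all $r-1$ remaining steps, each of which is an intersection with a pseudodivisor defined only on a successively smaller closed subscheme, with case distinctions and choices of representatives that differ for the input cycles $c_1(\sL_r)\cap[X]$ and $[X]$. The identity is true, and your first suggested route is the right one: regard $c_1(\sL_r)\cap(-)$ as intersection with the pseudodivisor $(\sL_r,X,0)$ (restricted to whatever the current ambient subscheme is) and move it outward one step at a time using \cite[Corollary 2.4.2]{[Fu98]} together with compatibility of pseudodivisor intersection with proper pushforward; all intermediate classes are supported on sets containing $Y$, so the final statement lands in $A_{d-r}(Y)$ as needed. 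But this iterated commutation is genuine work that your proposal leaves as a sketch, and it is exactly the work the paper's bottom-up organization is designed to avoid. Your second suggested route (via $\mathbf{0}_{\sE}^![X]$ and the exact sequence $0\to\sE_{r-1}\to\sE\to\sL_r\to 0$) is less satisfactory here: the lemma concerns the class produced by Fulton's Lemma 3.2 construction, so invoking the $\mathbf{0}^!$ description presupposes the equivalence of the two constructions (the paper's Proposition \ref{prop:constructions-agree}, proved separately), and the localized Whitney-type compatibility with the exact sequence is itself a statement that would have to be proved, not quoted. So: viable alternative strategy, but to make it a proof you must actually carry out the commutation argument in the degenerate case.
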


\begin{proof} The proof is by induction on $r$, noting that after the 
$k$th step of the construction described above, what we have done is
exactly the construction for the localized top Chern class of
$\sE/\sE_{r-k}$ supported on the zero set of the section induced by $s$.
The base case $r=0$ is trivial. Now suppose we have the lemma for $r-1$,
and let $\sE'=\sE/\sE_1$, $s'$ the section of $\sE'$ induced by $s$, 
$Y'=Z(s')$, and 
$\beta'\in A_{d-(r-1)}(Y')$ the associated localized class. Let 
$s_1 \in \Gamma(Y',\sE_1|_{Y'})$ be the section induced by $s$ (note that
$\sL_1=\sE_1$). By the inductive
hypothesis, we have some $Y_1',\dots,Y'_{m'}$, $a'_1,\dots,a'_{m'}$ and
$S'_1,\dots,S'_{m'} \subseteq \{2,\dots,r\}$ such that 
$r-1=|S_j|+\dim X-\dim Y'_j$ for each $j$, and 
\begin{equation}\label{eqn:beta-induction} \beta'
=\sum_{j=1}^{m'} \iota'_{j,*} \left(a'_i \prod_{i \in S'_j} c_1(\sL_i|_{Y'_j}) \cap [Y'_j] \right). \end{equation}
By definition, we have $\beta=(\sL_1|_{Y'},Y',s_1) \cdot \beta'$, so we have to show that the operation of product with the pseudodivisor
$(\sL_1|_{Y'},Y',s_1)$ leads to an expression of the desired form, which
we carry out one $j$ at a time. 

For those $j$ with $\smash{ Y'_j \subseteq Z(s_1)=Y }$,
the product operation simply takes a divisor representative of 
$\smash{ \sL_1|_{Y'_j} }$, and in this case we get the desired expression simply by
setting $\smash{ Y_j=Y'_j }$, $\smash{ a_j=a'_j }$, and $\smash{ S_j=\{1\} \cup S'_j} $. 

On the other hand, for those $j$ with $Y'_j \not\subseteq Z(s_1)$, then
$Z(s_1)$ induces an effective Cartier divisor on $Y'_j$, say 
$b_1 Z_1+\dots+b_{\ell} Z_{\ell}$, with each $b_i>0$. Then 
\begin{equation}\label{eqn: pseudodiv-comm} (\sL|_{Y'},Y',s_1) \cdot \left( \prod_{i \in S'_j} c_1(\sL_i |_{Y'_j}) \cap [Y'_j] \right) = \prod_{i \in S'_j} c_1(\sL_i|_{Y'_j \cap Z(s_1)}) \cap \sum_{k=1}^\ell b_k[Z_k] \end{equation}
in $A_*(Y'_j \cap Z(s_1))$, by commutativity
of product with pseudodivisors \cite[Corollary 2.4.2]{[Fu98]}. We thus see that if we split
the $j$ term of \eqref{eqn:beta-induction} into $\ell$ parts, with each $Z_{j'}$ in place
of $Y'_j$, and $b_{j'} a'_j$ in place of $a'_j$ (and leaving $S'_j$ 
unchanged), we obtain an expression of the desired form. Note that, since \eqref{eqn: pseudodiv-comm} is in $A_*(Y'_j \cap Z(s_1))$, there is no difficulty dealing with the necessary pushforwards using the projection formula \cite[Proposition 2.3 (c)]{[Fu98]}. 
\end{proof}

To apply the lemma, we will need the following.

\begin{defn}\label{defn:alg-bpf} We say a line bundle $\sL$ on a variety
$X$ is \emph{algebraically base-point free} if for all $p \in X$, there
exists an effective divisor $D$ on $X$, with $p \not \in D$, and such
that $D$ is algebraically equivalent to (a representative of) $\sL$.
\end{defn}

Our general criterion is then as follows.

\begin{thm}\label{thm:positive} 
In Situation \ref{sit:localized-chern}, with $X$ a variety,
$r=\dim X$, and $\alpha=[X]$, let $\beta \in A_0(Y)$ be the associated
localized top Chern class. Suppose that $Y'$ is a connected component
of $Y$ such that $\sL_i|_{Y'}$ is algebraically base-point free for all 
$i$. Then $\deg \beta|_{Y'} \geq 0$. If in addition for every $i$ we
have that $\sL_i|_{Y'}$ has positive degree on every curve in $Y'$, then 
$\deg \beta|_{Y'} > 0$.
\end{thm}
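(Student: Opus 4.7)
The plan is to reduce $\beta|_{Y'}$ via Lemma \ref{lem:chern-restrict} to a sum of top intersection numbers on irreducible subvarieties of $Y'$, and then bound each such number by an induction that uses algebraic base-point freeness to substitute effective cycle representatives up to algebraic equivalence. Concretely, I would apply Lemma \ref{lem:chern-restrict} with $r = \dim X$ to write
$$\beta = \sum_{j=1}^m a_j \iota_{j,*}\!\left(\prod_{i \in S_j} c_1(\sL_i|_{Y_j}) \cap [Y_j]\right),$$
with $a_j > 0$, each $Y_j$ irreducible, and $|S_j| = \dim Y_j$. Since $Y'$ is a clopen subset of $Y$ and each $Y_j$ is irreducible, $Y_j$ either lies in $Y'$ or is disjoint from it, so $\deg \beta|_{Y'}$ is a positive-coefficient combination of the integers $\deg \prod_{i \in S_j} c_1(\sL_i|_{Y_j}) \cap [Y_j]$ taken over those $j$ with $Y_j \subseteq Y'$. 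The theorem thus reduces to the following claim: for any irreducible closed subvariety $Z \subseteq Y'$ and any $T \subseteq \{1,\dots,r\}$ with $|T| = \dim Z$, we have $\deg \prod_{i \in T} c_1(\sL_i|_Z) \cap [Z] \geq 0$, with strict inequality if each $\sL_i|_{Y'}$ has positive degree on every curve in $Y'$.

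I would prove the claim by induction on $\dim Z$. The base case $\dim Z = 0$ is trivial since $[Z]$ is a reduced point of degree $1$. For the inductive step, pick any $i_0 \in T$ and a point $p \in Z$; algebraic base-point freeness of $\sL_{i_0}|_{Y'}$ produces an effective divisor $E$ on $Y'$, algebraically equivalent to $\sL_{i_0}|_{Y'}$, with $p \notin E$. Then $Z \not\subseteq E$, so $E|_Z$ is a well-defined effective Cartier divisor on $Z$ with Weil-divisor expansion $\sum_\ell n_\ell [C_\ell]$, $n_\ell > 0$, and each $C_\ell \subseteq Y'$ irreducible of dimension $\dim Z - 1$. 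The class $c_1(\sL_{i_0}|_Z) \cap [Z]$ and the cycle $\sum_\ell n_\ell [C_\ell]$ differ by an algebraically trivial element of $A_{\dim Z - 1}(Z)$; since degrees of zero-cycles on proper varieties are invariant under algebraic equivalence, I can substitute to obtain
$$\deg \prod_{i \in T} c_1(\sL_i|_Z) \cap [Z] = \sum_\ell n_\ell \deg \prod_{i \in T \setminus \{i_0\}} c_1(\sL_i|_{C_\ell}) \cap [C_\ell],$$
and the right-hand side is $\geq 0$ by the inductive hypothesis.

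For the strict-positivity half I need at least one summand on the right to be positive when $\dim Z \geq 1$. If $E \cap Z$ were empty, then $\sL_{i_0}|_Z$ would be algebraically trivial, hence have degree zero on every curve in $Z$, contradicting the hypothesis; thus some $n_\ell > 0$, and the corresponding inner degree is positive by induction. The main obstacle I anticipate lies in justifying the substitution step: I must argue that replacing $c_1(\sL_{i_0}|_Z) \cap [Z]$ by the algebraically equivalent cycle $\sum_\ell n_\ell [C_\ell]$ preserves the final degree. The relevant fact is that an algebraically trivial class in $A_*(Z)$ has vanishing cap product against any further product of $c_1$'s once the resulting class is a zero-cycle on a proper variety. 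This relies on $Z$ being proper, which holds in the intended applications where the ambient space is $C \times S$ with $C$ and $S$ both projective; once this fact is in place, the rest of the induction is essentially mechanical.
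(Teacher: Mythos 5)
Your proposal is correct and takes essentially the same route as the paper: reduce via Lemma \ref{lem:chern-restrict} to the degrees of $\prod_{i\in S_j} c_1(\sL_i|_{Y_j})\cap [Y_j]$ for irreducible $Y_j\subseteq Y'$, then use invariance of degree under algebraic equivalence to replace each $c_1(\sL_i)$ by an effective representative avoiding a chosen point of the current cycle, yielding a proper (hence nonnegative) intersection, with strict positivity coming from the positive-degree-on-curves hypothesis. Your explicit induction on $\dim Z$ is just a spelled-out version of the paper's ``choosing these iteratively we can realize the degree as a proper intersection,'' and your appeal to properness and to compatibility of Chern class operations with algebraic equivalence matches what the paper uses implicitly.
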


\begin{proof} According to Lemma \ref{lem:chern-restrict}, the degree
of $\beta|_{Y'}$ is computed by (taking a positive linear combination of
classes obtained by) intersecting representatives of
restrictions of the $\sL_i$ to various irreducible closed subsets $Y_j$
of $Y'$. However, since degree is invariant under algebraic equivalence, 
we may replace these representatives by any algebraically equivalent 
classes. By hypothesis, for each $i$ there are effective classes on $Y'$ 
algebraically equivalent to representatives for $\sL_i|_{Y'}$, and not
containing any given point, so choosing these iteratively we can realize
the degree as a proper intersection, which is therefore nonnegative.

Next, if $\sL_i|_{Y'}$ has positive degree on every curve, then we see
that for each $Y_j$, every stage of the intersection on $Y_j$ must yield
a nonzero class, so we obtain the desired positivity.
\end{proof}

\begin{rmk} The algebraically base-point free condition is actually
slightly stronger than necessary for Theorem \ref{thm:positive}:
indeed, it is enough to assume that the `algebraic base locus' of
$\sL$ does not contain any curves.

Note also that the condition of having positive degree on every curve
does not imply the algebraically base-point free condition. Indeed, Mumford
constructed an example of a (smooth, projective) surface $X$ and line
bundle $\sL$ on $X$ such that $\sL$ has positive degree on every curve on
$S$, but $\sL \cdot \sL =0$; in particular, $\sL$ cannot have a 
representative which is algebraically equivalent to an effective divisor
on $X$. See \cite[Example 1.5.2]{[La04]}.
\end{rmk}

We now apply Theorem \ref{thm:positive} to our situation. 

\begin{prop}\label{prop:positive} In Situation \ref{sit:basic}, 
let $\sL$ be as in Proposition \ref{prop:ram-family}. Then for all $p \in X$, 
we have that $\sL|_{\{p\} \times S}$ is algebraically base-point free.

Moreover, if every divisor in $\fami D$ occurs over only finitely many
values of $z \in S$, then for all $p \in X$, we have that
$\sL|_{\{p\} \times S}$ has positive degree on every curve in $S$.
\end{prop}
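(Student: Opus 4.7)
The common thread is the observation that for any two points $p, p' \in X$, the line bundles $\sL|_{\{p\} \times S}$ and $\sL|_{\{p'\} \times S}$ are algebraically equivalent, since they are members of the algebraic family $\{\sL|_{\{x\} \times S}\}_{x \in X}$ on $S$ parameterized by the irreducible (hence connected) variety $X$. In particular, both the property of being algebraically base-point free at a given $q \in S$ and the property of having positive degree on a given curve $C' \subseteq S$ depend only on the algebraic equivalence class, so in each case it suffices to verify the condition for a single convenient choice of $p'$.

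For algebraic base-point-freeness at $q \in S$, the divisor $\fami D_q$ is a proper effective Cartier divisor on $X$, so we can choose $p' \in X \setminus \fami D_q$. The restriction $s|_{\{p'\} \times S}$ is then nonzero (as $(p',q) \notin \fami D$), so its vanishing locus $\{z \in S : p' \in \fami D_z\}$ is an effective Cartier divisor on $\{p'\} \times S \cong S$ representing $\sL|_{\{p'\} \times S}$ and avoiding $q$. By the algebraic equivalence above, this exhibits the required representative for $\sL|_{\{p\} \times S}$.

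For positivity on a curve $C' \subseteq S$ under the finiteness hypothesis, consider the projection $\pi: \fami D \times_S C' \to X$, where $\fami D \times_S C'$ is a flat family of divisors on $X$ over $C'$ of total dimension $\dim X$. The plan is to show $\pi$ is dominant, from which positivity follows easily: for a general $p' \in X$, the fiber $\pi^{-1}(p') = \{z \in C' : p' \in \fami D_z\}$ is a finite, nonempty set, so $s|_{\{p'\} \times C'}$ is a nonzero section with nontrivial zero divisor, giving $\deg \sL|_{\{p'\} \times C'} > 0$. The main obstacle is establishing this dominance: if $\pi$ were not dominant, its image $Z \subsetneq X$ would be a proper closed subset of dimension at most $\dim X - 1$, and every $\fami D_z$ for $z \in C'$ would be set-theoretically contained in $Z$. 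Being pure of codimension one, each $\fami D_z$ would then have to be a union of top-dimensional irreducible components of $Z$; combined with bounded multiplicities, which follow from flatness of $\fami D$ over $S$ since the Hilbert polynomials are constant, this would force the divisors $\fami D_z$ to take only finitely many distinct values as $z$ ranges over the infinitely many closed points of $C'$, contradicting the hypothesis.
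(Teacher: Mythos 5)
Your proposal is correct and follows essentially the same route as the paper: both parts reduce to a convenient $p' \in X \smallsetminus \fami D_z$ via algebraic equivalence of the bundles $\sL|_{\{x\} \times S}$ over the connected base $X$, with the zero divisor of $s|_{\{p'\}\times S}$ furnishing the required effective representative, respectively the positive intersection with the curve. The only difference is one of detail: where the paper simply asserts that the nontrivially moving divisors force a positive, finite number of $z' \in Z$ with $p' \in \fami D_{z'}$, you justify this by proving surjectivity of $\fami D \times_S C' \to X$ (via flatness bounding multiplicities), which is a welcome elaboration of the same idea.
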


\begin{proof} Let $s \in \Gamma(X \times S,\sL)$ also be as in
Proposition \ref{prop:ram-family}. Then by definition, for any $p' \in X$,
we have that $s|_{\{p'\} \times S}$ vanishes precisely on the set of
$z \in S$ such that $p' \in \fami D_z \subseteq X$. Given $z \in S$, choose
any $p' \in X \smallsetminus \fami D_z$, so that $z$ is not in 
$Z(s|_{\{p'\} \times S})$, which is an effective divisor representing
$\sL|_{\{p'\} \times S}$. Now, since $X$ is connected, we have
that $\sL|_{\{p\} \times S}$ and $\sL|_{\{p'\} \times S}$ are in the
same connected component of $\mathrm{Pic}(S)$, so it follows that the former
is algebraically base-point free, as desired.

Next, let $Z \subseteq S$ be a curve. Then the restriction of
$\sL|_{\{p\} \times S}$ to $Z$ has the same degree as the restriction
$\sL|_{\{p'\} \times S}$ to $Z$ for any $p' \in X$. Under our finiteness
hypothesis, we have that the divisors $\fami D_z$ must move nontrivially
as $z$ varies in $Z$, so if we choose any $z \in Z$ and 
$p' \in X \smallsetminus \fami D_z$, there are necessarily a positive,
finite number of $z' \in Z$ such that $p' \in \fami D_{z'}$,
so we have
that $Z(s|_{\{p'\} \times S})$ meets $Z$ in a positive, finite number
of points, and hence that the degree of  
$\sL|_{\{p'\} \times S}$ on $Z$ is positive, as desired.
\end{proof}

Putting everything together, we conclude the following:

\begin{cor}\label{cor:positive-2} In the situation of Theorem 
\ref{main theorem} (in particular maintaining the hypothesis that
$S$ is a variety), all the $m_p$ are nonnegative. Under the further 
hypothesis that every divisor
appears at most finitely many times in $\fami D$, then for any
inflection point $p \in C$, we have that $m_p$ is at least equal to
the number of connected components of $\fami I|_{\{p\} \times S}$, and
in particular $m_p$ is positive.
\end{cor}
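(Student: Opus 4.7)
The plan is to apply Theorem \ref{thm:positive} to the ambient variety $C \times S$ and the zero locus $\fami I$ of $\tilde s$, using the standard jet bundle filtration together with the positivity of $\sL$ from Proposition \ref{prop:positive}. Since Proposition \ref{prop:constructions-agree} identifies $[\fami I]^{\mathrm{vir}}$ with Fulton's filtered localized top Chern class, this setup is legitimate.

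First I would observe that iterating the short exact sequence \eqref{jet bundle ses} produces a complete filtration of the rank-$(n+1)$ bundle $\jj^n_{C \times S/S} f_S^*\sL$ whose successive quotient line bundles are $\sL_k := f_S^*\sL \otimes \omega_{C \times S/S}^{\otimes k}$ for $k = 0, \ldots, n$. Since $\dim(C \times S) = n+1$ equals the rank of this bundle and $C \times S$ is a variety, the basic setup of Theorem \ref{thm:positive} is in place with $X = C \times S$, $\alpha = [C \times S]$, and $Y = \fami I$.

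Next I would focus on a single connected component $Y'$ of $\fami I$. The decomposition $\fami I = \bigsqcup_{p \in I} \fami I_p$ with $\fami I_p$ set-theoretically supported on $\{p\} \times S$ forces $Y' \subseteq \{p\} \times S$ for a unique inflection point $p$. The geometric key is that $\omega_{C \times S/S}$ is the pullback of $\omega_C$ under the first projection, hence trivial on $\{p\} \times S$. Identifying $\{p\} \times S$ with $S$, each quotient restricts as $\sL_k|_{\{p\} \times S} \cong \sL|_{\{f(p)\} \times S}$, and Proposition \ref{prop:positive} gives that the latter is algebraically base-point free. A small refinement of that proof transfers the property to $Y'$: for any $q \in Y'$ and any auxiliary $z \in Y'$, the complement $X \smallsetminus (\fami D_q \cup \fami D_z)$ is nonempty since $X$ is irreducible, so one can pick $p' \in X$ there; then $Z(s|_{\{p'\} \times S})$ is an effective representative of $\sL|_{\{p\} \times S}$ that both misses $q$ and does not contain $Y'$, and its restriction to $Y'$ witnesses algebraic base-point freeness of $\sL_k|_{Y'}$. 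Theorem \ref{thm:positive} then yields $\deg [\fami I]^{\mathrm{vir}}|_{Y'} \geq 0$, and summing over the connected components of $\fami I_p$ gives $m_p \geq 0$.

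For strict positivity under the hypothesis that divisors in $\fami D$ repeat only finitely often, Proposition \ref{prop:positive} further ensures $\sL|_{\{f(p)\} \times S}$ has positive degree on every curve in $S$, and since every curve in $Y'$ is a curve in $S$, so does each $\sL_k|_{Y'}$. The second clause of Theorem \ref{thm:positive} then produces strictly positive contributions from each connected component, giving $m_p \geq$ (number of connected components of $\fami I|_{\{p\} \times S}$); positivity follows because $\fami I_p$ is nonempty whenever $p$ is an inflection point. The one delicate step I expect is the descent of algebraic base-point freeness from $\{p\} \times S$ to $Y'$ outlined above; everything else is a routine invocation of the machinery already established.
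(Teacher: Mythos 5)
Your proposal is correct and takes essentially the same route as the paper: the complete filtration of $\jj^n_{C\times S/S}f_S^*\sL$ from iterating \eqref{jet bundle ses}, the observation that the $\omega_{C\times S/S}^{\otimes k}$ factors die on fibers $\{p\}\times S$, and then Proposition \ref{prop:positive} combined with Theorem \ref{thm:positive} applied component by component. The ``delicate'' descent of algebraic base-point freeness from $\{p\}\times S$ to a connected component $Y'$ that you spell out is a point the paper leaves implicit, and your argument for it (an effective representative missing a chosen point of $Y'$ automatically fails to contain $Y'$, and restriction preserves algebraic equivalence) is fine.
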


\begin{proof} Iterating \eqref{jet bundle ses} induces a complete flag
of quotient bundles for our jet bundles, and taking kernels gives us
a filtration. Note that the quotient line bundles from this filtration
are precisely the same as the kernels of \eqref{jet bundle ses}, and
also that the $\smash{ \omega_{C \times S/S}^{\otimes k} }$ term
drops out upon restriction to a fiber $\{p\} \times S$.
Then applying Proposition \ref{prop:positive} and Theorem
\ref{thm:positive}, we conclude
the desired statements on the $m_p$.
\end{proof}

Morally, the above refinement of the positivity statement for the $m_p$ says 
that if more than one divisor in $\fami D$
realizes the inflectionary behavior at $p$, they each contribute to $m_p$. 
However, this is not what contributes to higher multiplicities in
either the Riemann-Hurwitz or Pl\"ucker settings: indeed, in both these
cases $\fami I|_{\{p\} \times S}$ is always connected.

\begin{rmk} Note that it is important that Theorem \ref{thm:positive}
only requires the weaker notion of being algebraically base-point free: in our
situation, if $X$ is not rationally connected, 
the line bundles $\sL|_{\{p\} \times S}$
themselves are frequently not base-point free, and indeed in interesting
cases (as in the Riemann-Hurwitz situation, or theta divisors on 
principally polarized abelian varieties) often have only one effective 
representative.

Similarly, it is important that we do not assume that $\sL$ is
algebraically base-point free on all of $C\times S$, since in our situation 
we may get negative contributions from $\omega_{C\times S/S}$ when $C$ is 
rational.
\end{rmk}

Finally, we sketch the promised argument for the equivalence of the two constructions of the localized top Chern class.

\begin{prop}\label{prop:constructions-agree} In Situation 
\ref{sit:localized-chern}, with $X$ of pure dimension $d$, the construction of Lemma 3.2 of \cite{[Fu98]} 
recalled above agrees with the construction of localized top Chern
classes in \S14.1 of \cite{[Fu98]}.
\end{prop}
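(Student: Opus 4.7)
The plan is to proceed by induction on the rank $r$ of $\sE$, using the filtration to factor the zero-section embedding of the total space of $\sE$ as an iterated chain of codimension-$1$ regular embeddings, and invoking functoriality of refined Gysin maps along this composition.

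For the base case $r=1$, the Lemma 3.2 recipe is literally the pseudodivisor construction associated to $(\sL_1,X,s)$ applied to $[X]$, while the \S14.1 construction is $\mathbf{0}_{\sL_1}^![X_s]$, where $X_s$ denotes $X$ embedded in the total space of $\sL_1$ via $s$. Decomposing $[X] = \sum_\alpha m_\alpha [X_\alpha]$ into irreducible components and applying \cite[Proposition 2.6(d)]{[Fu98]} componentwise, one finds that for each $\alpha$ with $s|_{X_\alpha} \not\equiv 0$ one obtains the Weil divisor of the Cartier divisor cut out by $s|_{X_\alpha}$, while for each $\alpha$ with $s|_{X_\alpha} \equiv 0$ one obtains $c_1(\sL_1|_{X_\alpha}) \cap [X_\alpha]$, representable by any Cartier divisor. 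This matches the pseudodivisor recipe, the degenerate branch making essential use of the representative-independence already built into Lemma 3.2.

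For the inductive step, set $\sE' = \sE_{r-1}$ and $\sL = \sL_r$, and let $\pi:\sE' \to X$ be the projection. The inclusion of total spaces $i:\sE' \hookrightarrow \sE$ is a regular embedding of codimension $1$ with normal bundle $\pi^*\sL$, and the zero section factors as $\mathbf{0}_\sE = i \circ \mathbf{0}_{\sE'}$. By functoriality of refined Gysin maps for composable regular embeddings (a special case of \cite[Theorem 6.5]{[Fu98]}), we have $\mathbf{0}_\sE^! = \mathbf{0}_{\sE'}^! \circ i^!$. Applying $i^!$ to $[X_s]$, the same base-case analysis (now for the codimension-$1$ embedding $i$) produces exactly the first-iteration output $\beta_1 \in A_{d-1}(X_1)$ of Lemma 3.2, where $X_1 = Z(s_r) \subseteq X$ and $s_r \in \Gamma(X,\sL)$ is the section induced by $s$. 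Since $s|_{X_1}$ factors through $\sE'|_{X_1}$, the class $\mathbf{0}_{\sE'}^!\beta_1$ is precisely the \S14.1 construction for $(X_1,\sE'|_{X_1},s|_{X_1})$ applied to the input cycle $\beta_1$, which by the inductive hypothesis coincides with the remaining iterations of Lemma 3.2 applied to $\beta_1$.

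The delicate point will be the base-case identification of $i^![X_s]$ with the pseudodivisor recipe when some components of $X$ sit entirely inside $Z(s)$ (respectively, in the general step, when components of $X_s$ sit inside $i(\sE')$): in these cases both constructions rely on an arbitrary choice of Cartier divisor representative of $\sL$, and one must check that the deformation-to-the-normal-cone description of $i^!$ does in fact accommodate this freedom. Once this is confirmed, the remainder of the argument is a formal consequence of functoriality of refined Gysin maps and unwinding of the two definitions.
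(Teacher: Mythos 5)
Your argument is essentially correct, but it proves the proposition by a genuinely different route than the paper. You run a direct induction on the flag: you identify each stage of the Lemma 3.2 recipe with a refined Gysin map for the codimension-$1$ regular embedding of total spaces $i:\sE_{r-1}\hookrightarrow \sE$ (normal bundle $\pi^*\sL_r$), and reassemble via functoriality of refined Gysin homomorphisms, $\mathbf{0}_{\sE}^! = \mathbf{0}_{\sE_{r-1}}^!\circ i^!$ (Fulton, Theorem 6.5), together with the base-change compatibility (Theorem 6.2(c)) needed to replace $\mathbf{0}_{\sE_{r-1}}^!$ by the zero section of the restricted bundle on $Z(s_r)$. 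The paper instead follows Fulton's Remark 14.3: the localized top Chern class is uniquely characterized by commuting with Gysin pullback along regular imbeddings and by its value on regular sections (the tautological section $s_{\taut}$ of $\pi^*\sE$ on the total space reduces everything to that case), and one then checks that the Lemma 3.2 construction satisfies both properties, the regular-section case being handled by locally splitting the filtration and invoking Example 6.5.1(b) and Proposition 7.1(b) of Fulton. Your approach avoids the uniqueness/tautological-section argument and the local regular-sequence verification, at the cost of more step-by-step bookkeeping; the paper's approach is shorter once the characterization is granted. Two points you should make explicit to close your sketch. First, after the initial step the input is no longer a fundamental class, so the statement you prove by induction must be the stronger one, namely that for an arbitrary cycle class $\alpha$ on a scheme mapping to $X$ (viewed in $\sE$ via $s$) the Lemma 3.2 output coincides with $\mathbf{0}_{\sE}^!(\alpha)$; your proposition is the special case $\alpha=[X]$, and your base case should be stated at this level of generality (componentwise, as you already do). Second, your flagged ``delicate point'' is resolved precisely by the normal-cone description of $i^!$ for a divisor: if a variety $V$ maps into the divisor, then $V\times_{\sE}\sE_{r-1}=V$, the normal cone is the restricted normal bundle, and $i^![V]=c_1\bigl(\pi^*\sL_r|_V\bigr)\cap[V]$, which is exactly the class of the ``arbitrary Cartier representative'' branch of the Lemma 3.2 recipe (the choice affects only the cycle, not its class in $A_*(Z(s_r))$); for components not mapping into the divisor one gets the Weil divisor of the pulled-back Cartier divisor, matching the other branch. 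With those two points spelled out, the argument is complete.
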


\begin{proof}[Sketch of proof] We follow the idea of Remark 14.3 of
\cite{[Fu98]}, observing that the localized top Chern class is 
completely determined by two of its formal properties, and then 
verifying that the construction of Lemma 3.2 of \cite{[Fu98]} also
satisfies these properties. Specifically, we first claim that the localized
top Chern class is completely determined by the property that it 
commutes with Gysin pullback for regular imbeddings, 
and that when $s$ is a regular section,
it is equal simply to the cycle associated to the zero scheme of $s$
(i.e., properties (d)(iii) and (c) of Proposition 14.1 of \cite{[Fu98]}).
Indeed, if $E$ is the total space of $\sE$, 
and $\pi:E \to X$ the structure map, then the diagonal map 
$E \to E \times_X E$ can be viewed as a tautological section
$s_{\taut}$ of $\pi^* \sE$, 
in the sense that any section of $\sE$ is obtained by pulling
back $s_{\taut}$ under the induced morphism $X \to E$. Now,
$s_{\taut}$ is always a regular section of $\pi^* \sE$, 
and the morphism $X \to E$ induced by a section is always a regular
imbedding, proving the claim that the localized top Chern class
is uniquely determined by the two stated properties.

Thus, in order to conclude the proposition, it suffices to prove that
the construction from Lemma 3.2 of \cite{[Fu98]} satisfies the same two
formal properties. As previously remarked, the first property follows
from Lemma 3.2 and ($\ast$) in the proof of Theorem 3.2 of \cite{[Fu98]}.
The second property, in the case that $s$ is a regular section,
must be classical, but is in any case easily deduced from results in
\cite{[Fu98]}: on some neighborhood of
any generic point of $Y$ we can split our filtration of $\sE$ and trivialize 
the quotient line bundles. We then get that the line bundle sections used 
in the construction (initially defined only on successively smaller
closed sets) extend to our neighborhood, and define a regular sequence
at the generic point in question.
The desired statement then follows from Example 6.5.1(b) and Proposition
7.1(b) of \cite{[Fu98]}.
\end{proof}

We now complete the proof of our main theorem.

\begin{proof}[Proof of Theorem \ref{thm:main theorem}] As mentioned 
initially, the formula for the inflection divisor follows from
\eqref{eqn: vir is local chern}, \eqref{eqn: split vir}, Definiton \ref{def:mp}, and Corollary \ref{cor:RHS}.

On the other hand, the statements on the $m_p$ follow 
from Corollary \ref{cor:positive-2}, Proposition \ref{prop:inflection divisors decompose} and Proposition \ref{prop:inflection divisors are functorial} (ii), which together allow us to restrict $S$ to its irreducible components, and then consider their images under the map $S \to \mathrm{Hilb}(X)$ induced by ${\fami D}$.
\end{proof}

\section{Special cases}\label{applications}

In this section, we prove Theorem \ref{thm:rhp-agree}, showing that our 
formula reduces to the classical Riemann-Hurwitz and Pl\"ucker formulas in 
the appropriate special cases. The most substantive issue is verifying 
that our multiplicities agree with the classical definitions of the 
multiplicities. As a new class of examples, we also consider the case of a 
principally polarized abelian variety, with the family of divisors induced 
by the polarization. 

\subsection*{Recovering the Riemann-Hurwitz formula} Let $X$ be a smooth 
curve, $S=X$, and $\fami D$ the diagonal in $X \times S=X \times X$; i.e. 
the family of divisors consists of all single points on $X$. Clearly, 
$N(X,\fami D) = 1$. Moreover, 
${\sh H}(X,\fami D) = -K_X$ because 
the divisor in $X \times X$ of pairs of points contained in a divisor of
$\fami D$ (formally, the pushforward to $X \times X$ of the small diagonal
in $X \times X \times X$) is precisely the diagonal $\Delta$, and the
pullback of ${\sh O}_{X \times X}(\Delta)$ under the diagonal immersion 
$X \to X \times X$ is the tangent bundle ${\sh T}_X$ of $X$. 
Then Theorem \ref{main theorem} reads
\begin{equation}\label{RH wannabe}
\sum_{p \in R} m_p p \sim -f^*K_X + K_C,
\end{equation}
which is the (algebraic form of the) Riemann-Hurwitz formula, provided that the multiplicities have the same meaning as in the Riemann-Hurwitz formula. In particular, $\smash{ \sum_{p \in I} m_p = 2g-2-d(2h-2)}$. 

To prove that that the multiplicities agree, we first review how they appear in Riemann-Hurwitz. The differential 
$$ \mathrm{d}f:{\sh T}_C \longrightarrow f^*{\sh T}_X $$
is a global section of ${\sh H}\!\!om({\sh T}_C, f^*{\sh T}_X) = f^*\omega_X^\vee \otimes \omega_C$ and its vanishing locus is the usual ramification divisor
(one then further relates this divisor to the ramification indices of $f$,
but this is not relevant to us at the moment). 
Now, in our setup
we have $f_X^* {\sh O}_{X \times X}(\Delta) = {\sh O}_{C \times X}(\Gamma_f)$, 
where $\Gamma_f \subseteq C \times X$ denotes the graph of $f$. Thus our construction takes the vanishing locus of the induced section $\smash{\tilde{s}}$ of $\smash{ \jj_{{C\times X}/X}^1{\sh O}_{C \times X}(\Gamma_f) }$, specifically the section induced by the unique (up to scalars) global section of $\smash{ {\sh O}_{C \times X}(\Gamma_f) }$ vanishing precisely on $\Gamma_f$.

To compare the two notions of multiplicity, we simply express both sections
in local coordinates. Let $p \in C$ be a ramification (inflection) point, and 
$q=f(p) \in X$. Let $u$ and $v$ be local coordinates on $C$ at $p$ and on
$X$ at $q$, respectively. Then $\omega_C$ and $\omega_X$ are trivialized
locally by $\mathrm{d}u$ and $\mathrm{d}v$, and if we write 
$f^{\sharp} v=g(u) \in \kk[[u]]$, the map $f^* \omega_X \to \omega_C$
from the usual Riemann-Hurwitz formula is expressed locally by $g'(u)$,
so that the corresponding multiplicity is simply $\ord_p g'(u)$.
On the other hand, on $C \times X$ the section $s$ of the sheaf 
$\sh O_{C \times X}(\Gamma_f)$ which vanishes precisely on $\Gamma_f$ has
a local expression $v-g(u)$ in a suitable local trivialization, and the
induced section $\smash{\tilde{s}}$ of 
$\smash{ \jj_{{C\times X}/X}^1{\sh O}_{C \times X}(\Gamma_f) }$
is given in terms of the usual trivialization simply by $(v-g(u),-g'(u))$.
Taking the zero set of the first term restricts to $\Gamma_f$, while the
second term then vanishes to order $\ord_p g'(u)$ at $p$, showing the 
desired agreement.

\subsection*{Recovering the Pl\"{u}cker formula} Let $X={\mathbb P}^r$,
$S=\left({\mathbb P}^r\right)^*$, and $\fami D$ the family of all hyperplanes
in ${\mathbb P}^r$, so $n=r$. Then 
$N({\mathbb P}^r,\fami D) = 1$ and 
${\sh H}({\mathbb P}^r,\fami D) = {\sh O}_{{\mathbb P}^r}(r+1)$. The former is trivial -- there is one hyperplane through 
$r$ general points in ${\mathbb P}^r$.
For the latter, the locus of co(hyper)planar $(r+1)$-tuples of points in ${\mathbb P}^r$ is precisely the vanishing locus of the determinant, i.e. $\{p_1 \wedge p_2 \wedge \dots \wedge p_{r+1} = 0\} \subseteq ({\mathbb P}^r)^{r+1}$, whose associated line bundle ${\sh O}_{{\mathbb P}^r}(1)^{\boxtimes r+1}$ pulls back to ${\sh O}_{{\mathbb P}^r}(r+1)$ under the diagonal immersion. Then \eqref{main equation} reads
$$ \sum_{p \in I} m_p p \sim (r+1) f^* {\sh O}_{{\mathbb P}^r}(r+1) + {r+1 \choose 2} K_C. $$
Taking degrees, we obtain $\sum_{p \in I} m_p = (r+1)d + {r+1 \choose 2}(2g-2)$, which is precisely the classical Pl\"ucker formula \cite[Proposition 1.1]{[EH83]}, provided that the multiplicities have the same meaning.

Now, note that if we set $\sL={\sh O}_{{\mathbb P}^r \times S}(\fami D)$ as
usual, and $\sL':=f^*{\sh O}_{{\mathbb P}^r}(1)$, then we have $f^*_{S}{\sh L} = {\sh L'} \boxtimes {\sh O}_{S}(1)$, 
and hence
$$ \jj^r_{{C\times S} / S} f^*_{S}{\sh L}= \jj^r_{{C\times S} / S}  \left( {\sh L'} \boxtimes {\sh O}_{S}(1) \right) = \left( \jj^r_C {\sh L'} \right) \boxtimes {\sh O}_{S}(1)$$
by Proposition \ref{prop:jet-basic}.

Setting 
${\sh E} = \jj^r_C {\sh L'}$ and ${\mathbb P}V = S=({\mathbb P}^r)^*$,
the desired agreement of the definition of multiplicities is then immediate
from the below lemma, together with the usual argument for the Pl\"ucker
formula \cite[Proposition 1.1]{[EH83]}, which expresses the inflection
divisor as the vanishing divisor of a section of a line bundle (specifically,
the determinant line bundle of $\jj^r_C {\sh L'}$).

\begin{lem}
Let ${\sh E}$ be a locally free sheaf on $C$ of rank $r+1$, and $V$ an
$(r+1)$-dimensional $\kk$-vector space. Then pushing forward to $C$
induces an isomorphism
\begin{equation*} \Gamma(C \times {\mathbb P}V, {\sh E} \boxtimes {\sh O}_{{\mathbb P}V}(1)) \cong \mathrm{Hom}_{{\sh O}_C}(V \otimes {\sh O}_C, {\sh E}).
\end{equation*}
Moreover, suppose we have a section 
$s \in \Gamma(C \times {\mathbb P}V, {\sh E} \boxtimes {\sh O}_{{\mathbb P}V}(1))$ such that the zero locus of $s$ is supported over finitely many points
of $C$, let
$\bar{s} \in \mathrm{Hom}_{{\sh O}_C}(V \otimes {\sh O}_C, {\sh E})$ be the
image of $s$ under the above isomorphism, and let $p \in C$ be any point. 
Then $\ord_p \det \bar{s}$ (as a section of the line bundle
$\det \sh E$) is equal to the multiplicity of the part of the localized top 
Chern class of $s$ which is supported over $p$.
\end{lem}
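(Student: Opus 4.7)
For the isomorphism, I plan to apply the projection formula: letting $\pi \colon C \times \mathbb{P}V \to C$ denote the projection, the standard fact $\pi_* {\sh O}_{\mathbb{P}V}(1) = V^\vee \otimes {\sh O}_C$ gives $\pi_*({\sh E} \boxtimes {\sh O}_{\mathbb{P}V}(1)) = {\sh E} \otimes V^\vee$, whose global sections are $\mathrm{Hom}_{{\sh O}_C}(V \otimes {\sh O}_C, {\sh E})$. Explicitly, $\bar{s} \colon V \otimes {\sh O}_C \to {\sh E}$ recovers $s$ as the composition ${\sh O}_{C \times \mathbb{P}V}(-1) \hookrightarrow V \otimes {\sh O}_{C \times \mathbb{P}V} \to \pi^*{\sh E}$, reinterpreted as a section of $\pi^*{\sh E} \otimes {\sh O}(1)$.

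For the multiplicity claim, I would exploit that both $\ord_p \det \bar{s}$ and the localized top Chern class multiplicity at $p$ depend only on the restriction of $\bar{s}$ to a formal neighborhood $\hat U = \mathrm{Spec}\,\hat{{\sh O}}_{C,p}$ of $p$. After trivializing ${\sh E}$ near $p$ and fixing a basis of $V$, I would represent $\bar{s}|_{\hat U}$ as a matrix $M \in \mathrm{Mat}_{r+1}(\kk[[u]])$. Since $\kk[[u]]$ is a PID, Smith normal form produces invertible $A, B \in \mathrm{GL}_{r+1}(\kk[[u]])$ with $AMB = \mathrm{diag}(u^{a_0}, \ldots, u^{a_r})$ and $a_0 + \cdots + a_r = m$; here $A$ corresponds to changing the trivialization of ${\sh E}$ near $p$, while $B$ corresponds to changing the local trivialization of $V \otimes {\sh O}_C$.

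In this diagonal form, ${\sh E}|_{\hat U}$ splits as $\bigoplus_i L_i$ (trivial line bundles), and $s$ decomposes as a direct sum $(s_0, \ldots, s_r)$ with $s_i = u^{a_i} x_i \in \Gamma(\hat U \times \mathbb{P}V, L_i \boxtimes {\sh O}(1))$, where the $x_i$ are the homogeneous coordinates on $\mathbb{P}V$. Applying Proposition \ref{prop:constructions-agree} and Lemma \ref{lem:chern-restrict} to this direct sum filtration, the localized top Chern class at $p$ is the iterated intersection of the Cartier divisors $D_i = \mathrm{div}(s_i) = a_i [\{p\} \times \mathbb{P}V] + [\hat U \times H_i]$, where $H_i = \{x_i = 0\}$. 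Because $\{p\} \times \mathbb{P}V$ has trivial normal bundle in $\hat U \times \mathbb{P}V$, its self-intersection vanishes; hence in the expansion of $D_0 \cdots D_r$, only terms selecting exactly one factor $a_{i_0}[\{p\} \times \mathbb{P}V]$ survive, each contributing $a_{i_0}[\{p\} \times \bigcap_{i \neq i_0} H_i] = a_{i_0}[\mathrm{pt}]$. Summing yields total degree $\sum a_i = m = \ord_p \det \bar{s}$.

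The main obstacle is making the Smith normal form step rigorous: column operations by elements of $\mathrm{GL}_{r+1}(\kk[[u]])$ do not arise from automorphisms of $V$ as a $\kk$-vector space, but from $u$-dependent automorphisms of $\mathbb{P}V$. I expect to handle this by verifying that such a $B$ induces an $\hat U$-automorphism of $\hat U \times \mathbb{P}V$ covered by a compatible isomorphism of ${\sh E} \boxtimes {\sh O}(1)|_{\hat U \times \mathbb{P}V}$ sending $s$ to its transform, so the localized top Chern class at $p$ is preserved. Should this prove delicate, a fallback is to bypass Smith normal form by choosing a global flag of subbundles on ${\sh E}$ (always available on a curve) and iterating Lemma \ref{lem:chern-restrict} along that flag, at the cost of more involved bookkeeping of the induced sections on each quotient line bundle.
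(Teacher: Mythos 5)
Your proposal is correct in substance but takes a genuinely different route for the multiplicity statement (the isomorphism part is the same projection-formula argument as in the paper). The paper works on a Zariski neighborhood $U$ of $p$ with $\sh E$ trivialized, writes $s$ as an $(r+1)$-tuple of linear forms $\sum_i f^j_i x_i$, and argues by induction on $r$: it factors $t^{e_r}$ out of the last form, shows the component $e_r(\{p\}\times \PP V)$ contributes exactly $e_r$, and controls the contribution of the residual hyperplane bundle $Z$ by restricting the remaining forms to $Z$ and relating the resulting $r\times r$ determinant to $\det(f^j_i)$ via row operations. You instead diagonalize $\bar{s}$ once and for all by Smith normal form and then compute the localized class directly for $s=(u^{a_0}x_0,\dots,u^{a_r}x_r)$ via the multilinear expansion, killing cross terms because $\sh O(\{p\}\times\PP V)$ restricts trivially to $\{p\}\times\PP V$ and the term with no such factor dies since $\bigcap_{i}H_i=\emptyset$; this is a clean calculation and avoids the determinant bookkeeping, at the price of the reduction step you flag. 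Two remarks on that step: (a) you do not need the formal completion at all --- since $C$ is a smooth curve, $\sh O_{C,p}$ is already a DVR, so Smith normal form is available over the local ring and the matrices $A,B$ spread out to a Zariski neighborhood $U$, which sidesteps having to prove that localized Chern classes are insensitive to passing to $\operatorname{Spec}\hat{\sh O}_{C,p}$; (b) the point about $B$ is genuinely unproblematic, since $U\times \PP V=\PP_U(V\otimes\sh O_U)$ and an automorphism of the free sheaf $V\otimes\sh O_U$ induces a $U$-automorphism of the projective bundle canonically covered by an isomorphism of the relative $\sh O(1)$, hence transports the pair (bundle, section) and therefore the localized top Chern class supported over $p$, while $\ord_p\det\bar{s}$ is unchanged because $\det A,\det B$ are units. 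With those two points made explicit, your argument is a complete and somewhat more structural alternative to the paper's induction; the paper's version has the advantage of not needing the invariance-under-automorphism discussion, yours has the advantage of isolating all the local algebra into one normal-form step.
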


\begin{proof}
We write $\mathrm{proj}_C$ and $\mathrm{proj}_{{\mathbb P}V}$ respectively for the projections to the two factors. Using the projection formula, we have
\begin{equation*}
\begin{aligned}
\mathrm{proj}_{C,*} \left({\sh E} \boxtimes {\sh O}_{{\mathbb P}V}(1)\right) &= \mathrm{proj}_{C,*} \left( \mathrm{proj}^*_C {\sh E} \otimes \left( {\sh O}_C \boxtimes {\sh O}_{{\mathbb P}V}(1) \right) \right) \\
&= {\sh E} \otimes \mathrm{proj}_{C,*} \left({\sh O}_C \boxtimes {\sh O}_{{\mathbb P}V}(1)\right) \\
&= {\sh E} \otimes (V^\vee \otimes {\sh O}_C) = {\sh H}\!\!om_{{\sh O}_C} (V \otimes {\sh O}_C,{\sh E}).
\end{aligned}
\end{equation*}
Since ${\sh H}\!\!om$ requires no sheafification, 
by taking global sections we obtain the isomorphism 
asserted in the first statement of the lemma.

For the second statement, it suffices to work locally on $C$ (use 
Proposition 2.3(c) as in Example 2.4.2 of \cite{[Fu98]}), so we
may restrict to a neighborhood $U$ of $C$ on which we have a trivialization 
of $\sh E$, and the only zero(es) of $s$ lie over $p$. Choosing coordinates
$x_i$ on ${\mathbb P}V$ we may represent $s$
as $(\sum_{i=0}^r f^0_i x_i,\dots,\sum_{i=0}^r f^r_i x_i)$, where the 
$f^j_i$ are regular functions on $U$.
 Thus, the zero set of $s$ is the intersection
of the zero sets of the $r+1$ forms $\sum_{i=0} f^j_i x_i$ for $j=0,\dots,r$.
Consequently, we observe that the zero set of $s$ is supported over 
a point $p' \in U$ if and only if $\det (f^j_i(p'))=0$. 
At the same time, the section $\bar{s}$ is represented by the matrix
$(f^j_i)$, so we obtain a set-theoretic identity which we want to 
extend to multiplicities.

We prove this by induction on $r$. The base case $r=0$ is trivial.
For $r>0$, the localized zero locus of $s$ is
defined by successive refined intersection of the $\sum_i f^j_i x_i$,
which we order from $j=r$ to $j=0$. Note that the hypothesis that the
zero set of $s$ is empty over $U \smallsetminus \{p\}$ implies that 
we cannot have $\sum_i f^r_i x_i$ vanishing identically, so it has a
naive associated effective divisor on $U \times {\mathbb P}V$, which
consists of a multiple of $\{p\} \times {\mathbb P}V$ together with a
hyperplane bundle $Z$ inside $U \times {\mathbb P}V$ (more formally, a 
section of $U \times {\mathbb P}(V^{\vee})$). Explicitly, if $t$ is a 
local coordinate on $C$ at $p$, and $e_r=\min_i \ord_p f^r_i$, then 
factoring out $t^{e_r}$ from all the $f^r_i$ induces the stated 
decomposition, with the multiplicity of $\{p\} \times \PP V$ being 
given by $e_r$. The definition of the localized zero locus of $s$ entails
computing the contributions from each component separately, and taking
the sum
For the component $e_r (\{p\} \times \PP V)$, we claim we
always get a contribution of precisely $e_r$. Indeed, each subsequent
form will be restricted to this fiber to compute the iterated intersection,
so is simply a linear form over the base field. We then see that each
iterated intersection simply yields (the class of) a linear subspace of
one smaller dimension: the zero set is either a hyperplane in the previously 
constructed cycle, in which case the new cycle is equal to this hyperplane
(with multiplicity $1$), or it contains the previously constructed cycle,
in which case the new cycle is defined by restricting the line bundle to
the old cycle and choosing a representative divisor, which again yields a 
hyperplane with multiplicity $1$. 

We then claim that the contribution
from $Z$ consists of $(\ord _p \det (f^j_i))-e_r$ points, which yields
the desired statement. First note that if we set 
$ g^j_i
=\begin{cases} t^{-e_r} f^j_i & \text{ if }j=r \\ f^j_i & \text{ if } j<r,\end{cases} $
we see that $(\ord _p \det (f^j_i))-e_r=\ord_p \det (g^j_i)$,
and by definition $Z=Z(\sum_i g^r_i x_i)$. Also note that 
because the zero set of $s$ is empty over $U \smallsetminus \{p\}$, the
zero sets of $\sum_i f^j_i x_i$ cannot contain $Z$ for any $j<r$.
It follows that the refined intersection of these zero sets
with each other and with $Z$ is equal to the refined intersection of the 
zero sets of $(\sum_i f^j_i x_i)|_Z$ for $j=0,\dots,r-1$. By hypothesis,
we have $g^r_i(p) \neq 0$ for some $i$; without loss of generality,
suppose that this holds for $i=r$. Restricting $U$ if necessary, we then
have that $g^r_r$ is invertible, and $x_0,\dots,x_{r-1}$ induce
coordinates on $Z$. In terms of these coordinates, for each $j <r$ we have 
that 
$$  \left( \sum_i f^j_i x_i \right) \Bigg|_Z
=\sum_{i=0}^{r-1} \left( f^j_i - f^j_r \frac{g^r_i}{g^r_r} \right) x_i.$$
By the induction hypothesis, the desired multiplicity is equal to 
$$\ord_p \det \left( f^j_i - f^j_r \frac{g^r_i}{g^r_r}\right)_{i,j<r}, $$
but we see that if we add a row consisting of $f^j_r$ on the bottom and
a column consisting of $0$'s in all but the last entry (and $f^r_r$ in the
last entry) on the right, the resulting matrix can be obtained by row 
operations
from $\smash{ (g^j_i) }$, so the above determinant differs from
$\smash{ \det (g^j_i) }$ by a multiple of $\smash{g^r_r}$, which doesn't affect
the order of vanishing at $p$. We thus conclude the desired statement.
\end{proof}

\subsection*{Abelian varieties} For our final class of examples, let $X=(A,\theta)$ be a principally polarized abelian variety of dimension $n$ and let ${\fami D}$ be the family of translates of $\theta$. Then $S = \mathrm{Pic}^\theta(A)$ is a homogeneous space for $A^\vee$, which is isomorphic to $A$ via the principal polarization. In this case, we will see that Theorem \ref{main theorem} says the following: if $C$ is any curve of genus $g$, and $f:C \to A$ is a morphism which is not everywhere 
inflectionary relative to ${\fami D}$, then
\begin{equation}\label{eqn: abelian varieties} \sum_{p \in I} m_p p \sim n!{n+1 \choose 2} K_C, \end{equation}
so the number of inflection points counted appropriately is $(g-1)n(n+1)!$.

Let $ \omega: A \times \mathrm{Pic}^\theta(A) \rightarrow \mathrm{Pic}^\theta(A) $
be the action map via translation. This time we compute $N$ and ${\sh H}$ from Proposition \ref{prop:h-classes}. Note that ${\fami D} = \omega^{-1}(D)$, where $D$ is the divisor which parametrizes line bundles ${\sh L} \in \mathrm{Pic}^\theta(A)$ whose (unique up to scalars) section vanishes at $0$,
and that $D$ is the image of $\theta$ under the isomorphism 
$A \to \mathrm{Pic}^{\theta}(A)$ given by $x \mapsto T_x^* {\sh O}(\theta)$.
Then the isomorphism $\sh L \mapsto \sh L(-\theta)$ from 
$\mathrm{Pic}^\theta(A)$ to $A^{\vee}$ maps $D$ to a principal polarization
of $A^{\vee}$, so
Riemann-Roch for abelian varieties \cite[\S16]{[Mu08]} says that $D^n = n!$.
By \cite[Theorem 2.5.(d)]{[Fu98]}, we have ${\fami D}^m = \omega^* (D^m)$ -- or officially
$$ c_1({\sh O}_{A \times \mathrm{Pic}^\theta(A)}({\fami D}))^m \cap \left[ A \times \mathrm{Pic}^\theta(A) \right] = \omega^*\left( c_1({\sh O}_{\mathrm{Pic}^\theta(A)}(D))^m \cap \left[ \mathrm{Pic}^\theta(A) \right] \right), $$
so ${\sh H}(A,{\fami D}) = {\sh O}_A$ and $N(A,{\fami D}) = n!$, the latter since the projection of $ A \times \mathrm{Pic}^\theta(A)$ to the first factor restricts to isomorphisms on the fibers of $\omega$. This completes the proof of \eqref{eqn: abelian varieties}.

Note that the case $n=1$ overlaps with the Riemann-Hurwitz formula, when
the target is an elliptic curve. In this case, $K_X$ is trivial, so we
see that the two formulas agree, as they must.

Moreover, in characteristic $0$, we see that our main theorem is not vacuous
for this family of examples. Indeed, provided $A$ is not a product of
lower-dimensional abelian varieties, Ein and Lazarsfeld have shown that
$\theta$ is normal \cite[Theorem 1]{[EL97]} (using also the decomposition
theorem \cite[Theorem 4.3.1]{[BL04]}),
and Koll\'ar had previously shown that $(A,\theta)$, and hence
$\theta$, is log canonical \cite[Theorem 17.13]{[Ko95]}. Thus, by 
Corollary \ref{cor:not-too-bad} below
we conclude that there exists curves $C$ and morphisms $f:C \to A$ which
have only finitely many inflection points.

\section{Discussion of degeneracy}\label{sec:degenerate}

One issue which we have not addressed is the notion of degeneracy, or
how to characterize when it will be the case that \emph{every} point of
$C$ is an inflection point relative to $\fami D$. Obviously, this will
occur in the case that $f(C)$ is wholly contained in some divisor in
$\fami D$, so we make the following definition.

\begin{defn}\label{def:map-degen} We say that the morphism $f:C \to X$ is 
\emph{degenerate} with respect to $\fami D$ if there exists $z \in S$
such that $f(C) \subseteq \fami D_z$.
\end{defn}

This is the classical notion of degeneracy which is implicitly
disallowed by the Riemann-Hurwitz and Pl\"ucker formulas, where we 
consider non-constant morphisms of curves, or morphisms to projective space
with image not contained in any hyperplane, respectively. In both of these
situations, it is part of the classical statement of the respective
formulas that (in characteristic $0$) the set of inflection points is
\emph{always} finite. This leads us to the following question:

\begin{Q}\label{ques:nondegenerate} Under what hypotheses on
$(X,\fami D)$ is it the case that whenever $f$ is nondegenerate,
we have only finitely many inflection points relative to $\fami D$?
\end{Q} 

We see that some additional hypotheses are certainly necessary. For instance,
it is in general possible to have $(X,\fami D)$ such that \emph{every}
$f:C \to X$ is everywhere ramified.

\begin{exmp}\label{ex:totally-degenerate} Suppose that $X$ is a curve, and
$\fami D=\{2q: q \in X\}$, with $S=X$, so that $n=1$. In this case,
every morphism $f:C \to X$ will be everywhere ramified.
\end{exmp} 

This leads us to the following definition.

\begin{defn} We say that the family $\fami D$ is \emph{totally degenerate}
if for all $C$ and $f:C \to X$, we have that $f$ is everywhere ramified
with respect to $\fami D$.
\end{defn}

The above behavior can be understood in terms of jet schemes as follows. 

\begin{nota}
Let $X_n$ denote the scheme of $n$-jets on $X$. 
Let $\fami D_n \subseteq X_n$ denote the
set of $n$-jets which occur as $n$-jets for some $D \in \fami D_n$.
\end{nota}

Then $X_n$ is smooth of dimension $(\dim X)(n+1)$, and $\fami D_n$ can
be understood equivalently as the image in $X_n$ of the relative $n$-jet 
scheme of $\fami D$, which is a closed subset of $X_n \times S$. 
In particular, since $S$ is assumed proper we have that $\fami D_n$ is
closed in $X_n$. We then have:

\begin{prop}\label{prop:totally-degen} The family $\fami D$ is totally
degenerate if and only if $\fami D_n = X_n$.
\end{prop}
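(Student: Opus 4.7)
The plan is to reformulate the problem in terms of jet schemes and then reduce the nontrivial direction to realizing an arbitrary $n$-jet of $X$ as the $n$-jet of a morphism from a smooth projective curve.

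First, for any morphism $f : C \to X$ and any $p \in C$, composing a canonical $n$-jet $\mathrm{Spec}(\kk[t]/(t^{n+1})) \to C$ at $p$ with $f$ produces an element $f^{(n)}_p \in X_n$, well defined up to reparameterization. The condition that $f^*\fami D_z$ has multiplicity at least $n+1$ at $p$ is equivalent to this composed jet factoring through $\fami D_z$, i.e., $f^{(n)}_p \in (\fami D_z)_n$. Taking the union over $z \in S$, I see that $p$ is an inflection point of $f$ relative to $\fami D$ if and only if $f^{(n)}_p \in \fami D_n$. Moreover, $\fami D_n$ is \emph{closed} in $X_n$: it is the image under the projection $X_n \times S \to X_n$ (which is proper since $S$ is) of the relative $n$-jet scheme of $\fami D$ over $S$, a closed subscheme of $X_n \times S$ obtained by pulling back the ideal of $\fami D \subseteq X \times S$.

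One direction is now immediate: if $\fami D_n = X_n$, then for every $f$ and every $p$ the jet $f^{(n)}_p$ automatically lies in $\fami D_n$, so every point is inflectionary and $\fami D$ is totally degenerate.

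For the converse, I contrapose and assume $\fami D_n \neq X_n$, so that $U = X_n \setminus \fami D_n$ is a nonempty open subset. The goal is to find $f : C \to X$ from a smooth projective curve and a point $p \in C$ with $f^{(n)}_p \in U$; it suffices to show that \emph{every} $\gamma \in X_n$ based at some $x \in X$ arises as such an $f^{(n)}_p$. Using smoothness, choose étale local coordinates $\pi : V \to \mathbb{A}^d$ at $x$, where $V \subseteq X$ is a Zariski-open neighborhood of $x$ and $\pi(x) = 0$; since $\pi$ is étale, the induced map on jet schemes gives an identification $V_n = V \times_{\mathbb{A}^d} (\mathbb{A}^d)_n$, so $\gamma$ corresponds to an $n$-jet $\pi_n(\gamma)$ of $\mathbb{A}^d$ at $0$. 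Such a jet is nothing but a $d$-tuple of polynomials of degree $\leq n$ vanishing at $0$, which tautologically defines a polynomial morphism $\phi_0 : \mathbb{A}^1 \to \mathbb{A}^d$ with $\phi_0(0) = 0$ realizing $\pi_n(\gamma)$. I then form the étale pullback $V' = \mathbb{A}^1 \times_{\mathbb{A}^d} V \to \mathbb{A}^1$, take the connected component $C'$ containing the $\kk$-point $(0, x)$, and observe that $C'$ is a smooth irreducible curve étale over a Zariski open of $\mathbb{A}^1$ around $0$, equipped with a composed morphism $C' \to V \subseteq X$ whose $n$-jet at $p = (0, x)$ equals $\gamma$ by construction. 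Finally, letting $C$ be the smooth projective completion of $C'$, I extend the morphism uniquely to $f : C \to X$ using the valuative criterion and the projectivity of $X$, yielding the desired curve, point, and jet.

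The main obstacle is the realization step: producing a genuine smooth projective curve $C$ with a point $p$ whose $n$-jet under $f$ equals a prescribed $\gamma \in X_n$. Everything else is bookkeeping about jet schemes and the properness of $S$.
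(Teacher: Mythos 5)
Your proof is correct, and it follows the paper's overall shape — translate inflection at $p$ into the condition that the canonical jet of $C$ at $p$ lands in $\fami D_n$ (using that $\fami D_n$ is closed because it is the image of the relative jet scheme under the proper projection $X_n\times S\to X_n$), note the direction $\fami D_n=X_n\Rightarrow$ totally degenerate is trivial, and reduce the converse to realizing a jet outside $\fami D_n$ by an honest map from a smooth projective curve. Where you genuinely diverge is in that realization step. The paper first selects a jet in the open complement $X_n\smallsetminus\fami D_n$ with \emph{nonzero tangent vector} (possible since $X_n$ is irreducible and the non-immersive locus is a proper closed subset, a point the paper leaves implicit), views it as a curvilinear subscheme of $X$, cuts by hypersurfaces through it to obtain an embedded curve smooth at $p$ inducing that jet, and normalizes. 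You instead realize an \emph{arbitrary} jet, immersive or not: via an \'etale chart $\pi:V\to\mathbb{A}^d$ you identify $V_n$ with $V\times_{\mathbb{A}^d}(\mathbb{A}^d)_n$, write the jet as a truncated polynomial arc $\phi_0:\mathbb{A}^1\to\mathbb{A}^d$, pull back the chart along $\phi_0$ to get a smooth curve \'etale over (an open in) $\mathbb{A}^1$ mapping to $X$ with the prescribed jet at $(0,x)$, and then compactify using the valuative criterion. Your route buys uniformity and a slightly stronger statement (every $n$-jet of $X$ is the jet of a map from a smooth projective curve, with no need to argue that the complement contains an immersive jet and no hypersurface-cutting/normalization step); the paper's route is shorter and produces an embedded curve in $X$. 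Both are complete, and your identification of inflection points with jets in $\fami D_n$ correctly uses that the canonical jet of a smooth curve is immersive and that $\fami D_n$ is reparametrization-stable.
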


\begin{proof} Any $f$ induces a
morphism $C_n \to X_n$, and a point $p \in C$ is an inflection point
with respect to $\fami D$
if and only if the image of $C_n|_p$ is contained inside $\fami D_n$.

Thus, if $\fami D_n =X_n$, it is clear that $\fami D_n$ is totally
degenerate. Conversely, if $\fami D_n \subsetneq X_n$, if we choose an 
$n$-jet in $X_n \smallsetminus D_n$ based at some $p \in X$ which induces 
a nonzero tangent vector at $p$, then cutting inductively by hypersurfaces
we can produce a curve in $X$ which is smooth 
at $p$ and induces the given jet; we can then let $C$ be the normalization of
this curve.
\end{proof}

If all the divisors in $\fami D$ are smooth, then $\fami D_n$ has 
dimension at most $n+(\dim X-1)(n+1)=(\dim X)(n+1)-1$, so has codimension
at least $1$ in $X_n$. In this case, we see from Proposition
\ref{prop:totally-degen} that $\fami D$ cannot be totally degenerate.
As it turns out, this generalizes considerably thanks to a criterion
of Ein, Musta\cb{t}\u{a} and Yasuda, who have shown that (in characteristic $0$)
a divisor in $X$ which is normal and has log canonical 
singularities has a jet scheme which is pure of the expected dimension 
$(\dim X-1)(n+1)$ \cite[Remark 3.4]{[EMY03]}.
We can therefore conclude the following:

\begin{cor}\label{cor:not-too-bad} If $\kk$ has characteristic $0$,
and $\fami D_z$ is normal with at worst log canonical 
singularities for all $z \in S$, then $\fami D$ is not totally degenerate.
\end{cor}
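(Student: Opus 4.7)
\smallskip

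The plan is to reduce directly to Proposition \ref{prop:totally-degen} via a dimension count on $\fami D_n$. By that proposition, it suffices to show $\fami D_n \subsetneq X_n$, and since $X_n$ is pure of dimension $(\dim X)(n+1)$, it suffices to exhibit a strict inequality $\dim \fami D_n < (\dim X)(n+1)$.

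The first step is to re-express $\fami D_n$ as the image in $X_n$ of the relative $n$-jet scheme of $\fami D \to S$, and to bound its dimension fiberwise over $S$. The fiber over a closed point $z \in S$ is precisely $(\fami D_z)_n$, the $n$-jet scheme of the divisor $\fami D_z \subseteq X$. This is where the Ein--Musta\cb{t}\u{a}--Yasuda criterion enters: under our hypothesis that each $\fami D_z$ is normal with at worst log canonical singularities and $\mathrm{char}\,\kk = 0$, \cite[Remark 3.4]{[EMY03]} guarantees that $(\fami D_z)_n$ is pure of the expected dimension $(\dim X - 1)(n+1)$.

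The second step is a straightforward dimension estimate. Since every fiber of the relative $n$-jet scheme of $\fami D \to S$ has dimension exactly $(\dim X - 1)(n+1)$, and $\dim S = n$ in Situation \ref{sit:basic}, the total space has dimension at most
\begin{equation*}
n + (\dim X - 1)(n+1) = (\dim X)(n+1) - 1.
\end{equation*}
The image $\fami D_n \subseteq X_n$ therefore has dimension at most $(\dim X)(n+1) - 1$, which is strictly less than $\dim X_n$. Applying Proposition \ref{prop:totally-degen} concludes the argument.

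I do not expect any serious obstacle here: the content of the corollary is essentially encapsulated in the EMY dimension bound for jet schemes of mildly singular divisors, and the only thing to check is the bookkeeping on dimensions of fibers versus total spaces. The mildest subtlety is simply that we need a uniform bound across all $z \in S$, which is automatic from the fact that EMY gives the expected dimension for every single fiber, so upper semicontinuity of fiber dimension is not even required.
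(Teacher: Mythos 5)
Your proposal is correct and follows essentially the same route as the paper: apply the Ein--Musta\cb{t}\u{a}--Yasuda result to get that the (relative) $n$-jet scheme of $\fami D$ has dimension at most $n + (\dim X - 1)(n+1) = (\dim X)(n+1) - 1$, so its image $\fami D_n$ is a proper closed subset of $X_n$, and conclude via Proposition \ref{prop:totally-degen}. The only cosmetic difference is that you spell out the fiberwise bookkeeping over $S$ explicitly, whereas the paper phrases it directly as purity of the relative jet scheme in the expected dimension.
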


\begin{proof} Applying the Ein-Musta\cb{t}\u{a}-Yasuda theorem, the relative jet 
scheme will be 
pure of the expected dimension $n+(\dim X-1)(n+1)$, so as in the smooth
case above, we necessarily have $D_n \subsetneq X_n$ and conclude the
corollary from Proposition \ref{prop:totally-degen}.
\end{proof}

However, even when $\fami D$ is not totally degenerate, there may be 
particular morphisms which are nondegenerate but nonetheless everywhere 
ramified.

\begin{exmp}\label{ex:dual-curve} Suppose that $X=\PP^2$, and $f:C \to \PP^2$ 
is a closed immersion. Let $\fami D$ be the dual curve to $C$; i.e., the
set of lines in $\PP^2$ occurring as tangent lines to $C$. Then $n=1$,
and although $f(C)$ is not contained in any line, we do have that $f$ is
everywhere ramified relative to $\fami D$. 
\end{exmp}

Thus, we conclude that the hypotheses required to address Question 
\ref{ques:nondegenerate} must include some nontrivial conditions on
the divisors in $\fami D$ moving with sufficient flexibility.

\begin{rmk} The jet scheme approach discussed above constitutes an 
alternate, more geometric approach to everything we do. However, we have 
found that our current approach using jet bundles seems to be more
tractable overall.
\end{rmk}

We conclude with an additional question. In both the Riemann-Hurwitz
and Pl\"ucker formulas, the multiplicities $m_p$ can be expressed
explicitly in terms of suitably defined inflection/ramification indices 
(at least, in characteristic $0$). We are therefore led to ask:

\begin{Q} Can one give explicit formulas for the $m_p$, for instance
in terms of the multiplicities at $p$ that occur among the $f^* D$ for
$D \in \fami D$?
\end{Q}

\end{document}